\title{Laminations with transverse measures in ordered abelian semigroups}
\author{Ulrich Oertel}
\date{Revised July, 2016}
\newenvironment{tightenum}{
\begin{enumerate}[(i)]
  \setlength{\itemsep}{1pt}
  \setlength{\parskip}{0pt}
  \setlength{\parsep}{0pt}
}{\end{enumerate}}
\newcounter{zahl}
\newenvironment{intenum}{
\begin{enumerate}[(1)]
  \setlength{\itemsep}{1pt}
  \setlength{\parskip}{0pt}
  \setlength{\parsep}{0pt}
}{\end{enumerate}}
\newtheorem{thm}{Theorem}[section] \newtheorem{lemma}[thm]{Lemma}
\newtheorem{corollary}[thm]{Corollary}
\newtheorem{proposition}[thm]{Proposition}
\newtheorem{problem}[thm]{Problem} 
\newtheorem*{claim*}{Claim}
\newtheorem{conj}[thm]{Conjecture} 
 \theoremstyle{definition}
\newtheorem{defn}[thm]{Definition}
\newtheorem{defns}[thm]{Definitions} 
 \newtheorem{ex}[thm]{Example}
\newtheorem{exs}[thm]{Examples}
\newtheorem{remark}[thm]{Remark}
\theoremstyle{remark}
\begin{document}

\maketitle


\def\HDS{half-disk sum}

\def\Length{\text{Length}}

\def\lub{\text{lub}}
\def\Area{\text{Area}}
\def\Im{\text{Im}}
\def\im{\text{Im}}
\def\cl{\text{cl}}
\def\rel{\text{ rel }}
\def\irred{irreducible}
\def\half{spinal pair }
\def\spinal{\half}
\def\spinals{\halfs}
\def\halfs{spinal pairs }
\def\reals{\mathbb R}
\def\rationals{\mathbb Q}
\def\complex{\mathbb C}
\def\naturals{\mathbb N}
\def\integers{\mathbb Z}
\def\id{\text{id}}
\def\Chi{\raise1.5pt \hbox{$\chi$}}

\def\proj{P}
\def\hyp {\hbox {\rm {H \kern -2.8ex I}\kern 1.15ex}}

\def\Diff{\text{Diff}}

\def\weight#1#2#3{{#1}\raise2.5pt\hbox{$\centerdot$}\left({#2},{#3}\right)}
\def\intr{{\rm int}}
\def\inter{\ \raise4pt\hbox{$^\circ$}\kern -1.6ex}
\def\Cal{\cal}
\def\from{:}
\def\inverse{^{-1}}
\def\Max{{\rm Max}}
\def\Min{{\rm Min}}
\def\fr{{\rm fr}}
\def\embed{\hookrightarrow}
\def\Genus{{\rm Genus}}
\def\Z{Z}
\def\X{X}

\def\roster{\begin{enumerate}}
\def\endroster{\end{enumerate}}
\def\intersect{\cap}
\def\definition{\begin{defn}}
\def\enddefinition{\end{defn}}
\def\subhead{\subsection\{}
\def\theorem{thm}
\def\endsubhead{\}}
\def\head{\section\{}
\def\endhead{\}}
\def\example{\begin{ex}}
\def\endexample{\end{ex}}
\def\ves{\vs}
\def\mZ{{\mathbb Z}}
\def\M{M(\Phi)}
\def\bdry{\partial}
\def\hop{\vskip 0.15in}
\def\hip{\vskip0.1in}
\def\mathring{\inter}
\def\trip{\vskip 0.09in}
\def\PML{\mathscr{PML}}
\def\J{\mathscr{J}}
\def\G{\mathscr{G}}
\def\H{\mathscr{H}}
\def\C{\mathscr{C}}
\def\S{\mathscr{S}}
\def\T{\mathscr{T}}
\def\E{\mathscr{E}}
\def\K{\mathscr{K}}
\def\PL{\mathscr{L}}
\def\suchthat{|}
\newcommand\invlimit{\varprojlim}
\newcommand\congruent{\equiv}
\newcommand\modulo[1]{\pmod{#1}}
\def\ML{\mathscr{ML}}
\def\Stack{\mathscr{T}}
\def\M{\mathscr{M}}
\def\A{\mathscr{A}}
\def\union{\cup}
\def\atlas{\mathscr{A}}
\def\interior{\text{Int}}
\def\frontier{\text{Fr}}
\def\composed{\circ}
\def\GL{\mathscr{GL}}
\def\PC{\mathscr{PC}}
\def\PM{\mathscr{PM}}
\def\D{\mathscr{D}}
\def\Proj{\mathscr{P}}
\def\PW{\mathscr{PW}}
\def\FD{\mathscr{FD}}
\def\FDM{\mathscr{FDM}}
\def\FDW{\mathscr{FDW}}
\def\PFDW{\mathscr{PFDW}}
\def\PFD{\mathscr{PFD}}
\def\PIFD{\mathscr{PIFD}}
\def\PQW{\mathscr{PQW}}
\def\PW{\mathscr{PW}}
\def\WC{\mathscr{WC}}
\def\W{\mathscr{W}}
\def\WM{\mathscr{WM}}
\def\PWM{\mathscr{PWM}}
\def\PFDM{\mathscr{PFDM}}
\def\D{\mathscr{D}}
\def\B{\mathscr{B}}
\def\I{\mathscr{I}}
\def\level{\mathfrak{L}}
\def\levels{\mathscr{L}}
\def\real{\mathfrak{R}}
\def\residue{\mathfrak{R}}
\def\FDL{\mathscr{FDL}}
\def\FDM{\mathscr{FDM}}
\def\PFDL{\mathscr{PFDL}}
\def\FDLM{\mathscr{FDLM}}
\def\PFDLM{\mathscr{PFDLM}}
\def\CFD{\mathscr{QFD}}
\def\CFD{\mathscr{CFD}}
\def\PCFD{\mathscr{PCFD}}
\def\CFDM{\mathscr{CFDM}}
\def\PCFDM{\mathscr{PCFDM}}
\def\PCFD{\mathscr{PCFD}}
\def\FDT{\mathscr{FDT}}
\def\PFDT{\mathscr{PFDT}}
\def\PCFDT{\mathscr{PCFDT}}
\def\PCFD{\mathscr{PCFD}}
\def\PCV{\mathscr{PCV}}
\def\CV{\mathscr{CV}}

\def\WC{\mathscr{WC}}
\def\PI{\mathscr{PI}}
\def\PQI{\mathscr{PQI}}
\def\VM{\mathscr{VM}}
\def\PVM{\mathscr{PVM}}
\def\PLM{\mathscr{LM}}
\def\LMM{\mathscr{LMM}}
\def\DG{\breve{\mathscr{G}}}
\def\SG{\mathfrak{S}}
\def\V{\mathscr{V}}
\def\LV{\mathscr{LV}}
\def\PV{\mathscr{PV}}
\def\QV{\mathscr{QV}}
\def\PQV{\mathscr{PQV}}
\def\abs{\odot}
\def\DS{\breve S}
\def\DL{\breve L}
\def\DBL{\breve{\bar L}}
\def\II{[0,\infty]}
\def\equiv{\hskip -3pt \sim}

\def\split{\prec}
\def\pinch{\succ}
\def\OB{\mathbb O}
\def\FB{\mathbb F}
\def\SB{\mathbb S}
\def\AB{\mathbb A}
\def\PB{\mathbb P}
\def\WB{\mathbb W}
\def\LB{\mathbb L}
\def\KB{\mathbb K}
\def\DLB{\mathbb {DL}}
\def\DKB{\mathbb {DK}}
\def\DSB{\mathbb {DS}}
\def\DOB{\mathbb {DO}}

\def\Aut{\mbox{Aut}}

\def\TB{\mathbb T}
\def\OBB{{\mathbb S}\kern -6pt\raisebox{1.3pt}{--} \kern 2pt}
\def\Infty{\hbox{$\infty$\kern -8.1pt\raisebox{0.2pt}{--}\kern 1pt}}
\def\ens{\bar\circledwedge}
\def\ins{\circledwedge}
\def\sins{\circledvee}
\def\sens{\bar\circledvee}
\def\isom{\cong}
\def\bw{\bf w}
\def\star{\textasteriskcentered}
\newcommand{\bigins}{\mathop{\mathlarger{\mathlarger\circledwedge}}}
\newcommand{\bigsins}{\mathop{\mathlarger{\mathlarger\circledvee}}}

\vskip 0.3in
\begin{abstract} We describe  a construction of ordered algebraic structures (ordered abelian semigroups, ordered commutative semirings, etc.) and describe applications to codimension-1 laminations. For a suitable ordered semi- algebraic structure $\LB$ and measurable space $X$ we define $\LB$-measures $\nu$ on $X$.     If $L$ is a codimension-1 lamination in a manifold, it often admits transverse $\LB$-measures for some $\LB$.  Transverse $\LB$-measures can be used to understand classes of laminations much larger than the class of laminations  admitting transverse positive $\reals$-measures.  In particular, we show that ``finite or infinite depth measured laminations" are laminations admitting transverse measures with values in a certain ordered semiring $\bar\OB$ satisfying the additional property that locally the values lie in a smaller semiring $\PB$.  We consider the ``realization problem:" In one version, this deals with the problem whether an $\PB$-invariant weight vector assigned to a branched manifold $B$ (satisfying certain branch equations) determines a lamination $L$ carried by $B$ with a transverse $\bar\OB$-measure inducing the weights on $B$.  We describe further laminations which may not be $\LB$-measured, but are ``well-covered" by laminations with transverse $\LB$-measures.    We also investigate actions on $\LB$-trees which are associated to essential laminations with transverse $\LB$-measures. 

In appendices, we develop ideas about $\LB$-measures a little further, for example showing that a $\PB$-measure can be interpreted as a kind of probability measure.   \end{abstract}

\section{Introduction.}

In this paper we construct certain ordered semi- algebraic structures which probably have been described before.  The author would appreciate any references to the literature. Then we describe transverse measures for codimension-1 laminations with values in some of these ordered semi- algebraic structures.    In an appendix we describe ``probability measures" with values in certain ordered semifields.    

 One important example of a semi-algebraic structure is an ordered semifield which we call $\PB$.   This can be used for probability calculations and for encoding ``finite or infinite depth measured laminations."  The idea for constructing $\PB$ (and other semi-algebraic structures we consider) is to start with the non-negative reals, but then refine in such a way that 0 is replaced by another real number ``at a different level" which `` measures the size of a 0."  This can be repeated as often as one wishes.  For probability measures with values in $\PB$, this means that we consider probabilities of events whose conventional probability is 0, assigning a probability ``at a different level" which is non-zero.

Returning to laminations, for simplicity we first consider a closed compact manifold $M$.  Given a codimension -1 lamination $L$ in $M$, one can ask whether it has a transverse $\reals$-valued measure.   If so, the lamination can be described by finite data, consisting of finitely many (non-negative) $\reals$-weights assigned to sectors of a branched manifold $B\embed M$ ($B$ embedded in $M$), so that the lamination is in some sense completely understood in terms of these weights.   Here the ``sectors" are completions of components of the complement of the branch locus of the branched manifold, see Definition \ref{BranchedManifoldDef} for definitions related to branched manifolds..

 One goal is to understand more general classes of laminations in terms of transverse measures.  We achieve the goal by considering measures with values in certain ordered semi- algebraic structures.

 \begin{defn}\label{FiniteDepthLam}  A {\it finite depth measured lamination} in a manifold $M$ is an isotopy class of a codimension-1 laminations of the form $\displaystyle L=\bigcup_{j=-d}^0L_j$ where $(L_{-d},\ldots, L_{-1}, L_0)$  is a finite sequence of $\reals$-measured laminations, with each $L_i$ embedded in $\displaystyle M_i= M\setminus \bigcup_{j>i}L_j$, with  $\bigcup_{j>i}L_j$ being a lamination in $M$, and with $L_i$ having a full support transverse $\reals$-measure $\mu_i$.    
 
 An {\it infinite depth codimension-1 measured lamination}  is defined in the same way but with $\displaystyle L=\bigcup_{j=-\infty}^0L_j$.  A {\it bi-infinite multi-level measured lamination $L$} is defined in the same way with   $\displaystyle L=\bigcup_{j=-\infty}^\infty L_j$.  An {\it infinite height measured lamination} has the form $\displaystyle L=\bigcup_{j=0}^\infty L_j$.  In all cases $(L_i,\mu_i)$ is an $\reals$-measured lamination in $M_i=M\setminus \bigcup_{j>i}L_j$, and $\bigcup_{j>i}L_j$ is a lamination in $M$.  A {\it finite height measured lamination} is the same as a finite depth foliation.
 
 A lamination of any of the types described above is called a {\it multi-level measured lamination}.
 \end{defn}
 
 Examples of 1-dimensional finite depth measured laminations in a surface can be constructed from a sequence of measured laminations $L_i$, each $L_i$ embedded in the complement of $\cup_{j>i}L_j$.   We describe the simplest possible example which is not $\reals$-measured.
 
 \begin{example}  Let $M$ be the 2-dimensional torus, let $L_0$ be a simple closed curve in $M$, and let $L_{-1}$ be a single leaf spiraling towards the closed leaf from both sides.   Each of $L_0$ and $L_{-1}$ have a transverse atomic measure.  Then $L=L_0\cup L_{-1}$ is a finite depth measured lamination which does not admit an $\reals$-measure.
 
 \end{example}
 
\begin{example}  For an example of a bi-infinite multi-level measured lamination, we consider the branched surface $B$ shown in Figure \ref{Infinite}.  The branched surface is shown as an immersed branched surface, but it can be embedded in $\reals^3$, and we assume it is so embedded.  The sectors of the branched surface are labeled $\hat L_i$.  For example, the sector $\hat L_0$ is a pair of pants (or sphere with 3 holes).  The {\it leaf} $L_0$ is obtained by identifying the two boundary components of the sector $\hat L_0$ which are mapped to the same branched curve of $B$  and extending at the remaining boundary, following parallel to $\hat L_1$, etc.  Then the union of the leaves $L_i$ form a  bi-infinite multi-level measured lamination.   For a rigorous construction of the leaves (laminations) $L_i$, one must find a sequence of splittings (or a 1-parameter family of splittings) of the branched surface $B$ without introducing intersections of the branch locus, extending the surfaces labeled $L_i$.   Then the inverse limit of the split branched surfaces is a lamination with leaves labeled $L_i$.  Furthermore, one sees that the leaf $L_i$ only limits on higher level $L_j$, as required.  We shall describe the process of splitting more rigorously later.

 \begin{figure}[H]
\centering
\scalebox{1}{\includegraphics{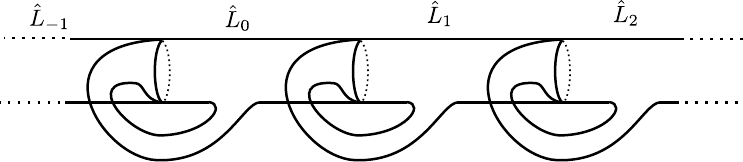}}
\caption{\small An example of a lamination $L$ with bi-infinite measured levels.}
\label{Infinite}
\end{figure}
\end{example}
 
 We give a definition of transverse measures for codimension-1 laminations with values in certain ordered algebraic structures.  We will not describe  such an ordered algebraic structure $\LB$ in the introduction; it is enough to know that for appropriate $\LB$, there are $\LB$-measures with values in $\LB$.   Suppose $L$ is a codimension-1 lamination in a manifold $M$.    Suppose $T$ is any transverse arc for $L$.  If $\LB$ is an ordered abelian semigroup, a {\it transverse $\LB$-measure} for $L$  assigns a measure $\mu(E)$ to each Borel set in $T$ so that the usual measure axioms are satisfied for $\mu$ a measure on $T$.  In addition, a transverse $\LB$-measure for $L$ must satisfy the {\it  invariance properties}:  (i) If $T'$ is a subarc of $T$, then the measure on $T'$ is the restriction of the measure on $T$.  (ii) If $T'$ is homotopic via a homotopy through transversals to $T$, then the homotopy takes the measure on $T'$ to the measure on $T$.  The homotopy of transversals should move each endpoint of the transversal such that it remains in a single leaf or remains in a component of the complement of $L$.

 In the special case that a lamination $L$ has a transverse measure $\mu$ with values in $\LB=[0,\infty]\subset \bar\reals$, we will often say that $(L,\mu)$ is an $\bar\reals$-measured lamination.  A $\bar \reals$-measured lamination is what is usually just called a ``measured lamination," for example in the Nielsen-Thurston theory of automorphisms of surfaces.   It is usually understood that a transverse $\bar\reals$-measure is locally finite, which means that for every transverse arc $T$, every $x\in T$ has a neighborhood $V\subset T$ with $\mu(V)<\infty$.  Another way of saying this is that $\mu$ is a Radon measure on $T$.  Clearly a Radon measure is finite on compact subsets of transversals and conversely if the measure on a transversal is finite on compact subsets of transversals, then it is locally finite.  Also, it is a standard fact that a Radon measure is inner regular.   In our context, the local finiteness property will be replaced by a different, related condition.   We will use two ordered commutative semirings $\PB$ and $\bar \OB$ which will be defined later, where $\PB\subset \bar\OB$ is actually a semifield . Suppose $L$ has a transverse $\bar\OB$-measure $\mu$, which yields a measure on every transversal $T$.   We shall work with measures which have further particular properties.  We will show that these measures are determined by their values in $\PB$, so we call them $(\PB,\bar\OB)$-measures.    
  
 We say a lamination $L\embed M$ {\it has parallel leaves} if there exist two leaves $\ell_0$ and $\ell_1$ such that the completion of $M\setminus(\ell_0\cup\ell_1)$ contains a product.

 \begin{thm} \label{FiniteDepthThm}     
 
 \noindent (a) A transverse Borel $(\PB,\bar\OB)$-measure on a given codimension-1 lamination $L$ in a manifold $M$ corresponds uniquely to a structure as as a multi-level measured lamination,  and conversely.
 
 \noindent (b) If $M$ is a compact surface (or a surface with cusps of finite type) and $L$ is essential without parallel leaves, then a transverse $(\PB,\bar\OB)$-measure on $L$ corresponds uniquely to a structure for $L$ as a finite depth measured lamination, and conversely.
\end{thm}

We will define {\it $\integers^r$ multi-level measured laminations} and we will show that such laminations can be represented as laminations with certain transverse measures called $(\PB_r,\bar\OB_r)$-measures.  The case $r=1$ gives us $(\PB,\bar\OB)$-measures.

\begin{thm}\label{RecursiveMultiThm}
 A full support transverse Borel $(\PB_r,\bar\OB_r)$-measure on a given codimension-1 lamination $L$ in a manifold $M$ corresponds uniquely to a structure as as an $\integers^r$ multi-level measured lamination,  and conversely.
\end{thm}

If $L$ is an essential lamination in a compact orientable closed surface $S$, $\chi(S)<0$, then there is an order tree $\T$ associated to the lift $\tilde L$ in the universal cover $\tilde S$ of $S$.   If $L$ admits a transverse $\LB$-measure, for some ordered abelian semigroup $\LB$, then $\T$ also obtains an additional structure related to $\LB$.   We state a simplified version of Proposition \ref{DualMeasureProp} here in the introduction:

\begin{proposition}\label{DualMeasureProp} Suppose $S$ is a compact orientable closed surface or a surface with $\chi(S)<0$. Given an essential lamination $L$ in $S$ which admits a transverse $\LB$-measure $\mu$, the associated order tree $\T$ is $\LB$-measured: there is an $\LB$-measure $\nu$ on the disjoint union of its segments such that measures on two different segments agree on the intersection segment.  If $L$ has no leaves with atomic transverse measure, then $\T$ is an $\LB$-metric space with metric $d(x,y)=\nu([x,y])$ for $x,y\in \T$. 
\hip
\noindent   There is an action of $\pi_1(S)$ on $\T$ which preserves the metric and measure on $\T$ \end{proposition}

It should be easy to prove a proposition similar to the above for essential 2-dimensional laminations in a 3-manifold with transverse $\LB$-measures.

Given a codimension-1 branched manifold $B\embed M$ carrying an $\bar\reals$-measured $L$, the measure on $L$ induces a set of {\it invariant} $\reals$-weights on the sectors of $B$.  Conversely, given an invariant $\reals$-weight vector $\bw$ assigning a weight to each sector of $B$, the weight vector  determines an $\bar\reals$-measured lamination up to some minor ambiguity having to do with atomic measures.   The fact that these laminations can be represented so simply by finite data on a finite combinatorial object makes them easier to understand.   Similarly, if $B$ carries an $\LB$-measured lamination $L$, there is an induced invariant $\LB$-weight vector on $B$.   In general, producing an $\LB$-measured lamination from and $\LB$- invariant weight vector on a codimension-1 branched manifold may not be possible.   This is an important problem:

\begin{problem}[The realization problem.]\label{RealizeProblem}  Find good sufficient conditions for an invariant $\PB_r$-weight vector on a codimension-1 branched manifold $B\embed M$ to be realizable by a $(\PB_r,\bar\OB_r)$-measured lamination carried by $B$.
\end{problem}

The problem could also be stated for transverse measures with values in other ordered algebraic structures.  The first case to consider is the realization of invariant $(\PB,\bar\OB)$ invariant weight vectors.   The easiest result is 

 \begin{proposition} \label{TrackExistenceProp}.  Suppose $B\embed S$ is a train track embedded in a surface $S$ with an invariant $\PB$ weight vector $\bw$.  Then there exists a $(\PB,\OB)$-measured lamination $(L,\mu)$ carried by $B$ inducing the weight vector $\bw$.   The measured lamination $(L,\mu)$ is not unique, but there is a canonical choice for $(L,\mu)$.
 \end{proposition}
 
 For higher-dimensional manifolds and branched manifolds, we give a sufficient condition for the realization of $(\PB,\bar\OB)$ weight vectors in Theorem \ref{PORealizeThm}, and we give a sufficient condition for  $(\PB_r,\bar\OB_r)$ weight vectors in Theorem \ref{PrOrRealizeThm}.   In general, there is an obstruction to the realization of  $(\PB_r,\bar\OB_r)$ weight vectors, and our theorem is not nearly as strong as it might be.   A better realization theorem could lead to a powerful technique for analyzing codimension-1 laminations.   
 
 Part of the potential for our techniques comes from the fact that we can study not only laminations which have $(\PB_r,\bar\OB_r)$-measures, but also laminations which are {\it well-covered} by $(\PB_r,\bar\OB_r)$-measured laminations.  Well-covered laminations are explained in Section \ref{WellCoveredSection}.  Briefly, a lamination $L$ carried by $B\embed M$ is well-covered  by a $(\PB_r,\bar\OB_r)$-measured lamination $\tilde L\embed \tilde M$ in the universal cover $\tilde M$ of $M$ (or the universal cover of a  regular neighborhood of $B$) such that covering translations transform the $\bar\OB_r$-measure via order-isomorphisms of $\bar\OB_r$ coming from a kind of scalar multiplication of elements of $\bar\OB_r$ by elements of $\PB_r$.  These laminations $L$ can also be described using finite data on $B$.  The following is a very optimistic conjecture, which roughly says that any codimension-1 lamination can be approximated arbitrarily closely by a lamination which is well-covered  by a $(\PB_r,\bar\OB_r)$- measured lamination.
 
 \begin{conj}  Let $L$ be any codimension-1 lamination $L\embed M$ in a compact manifold $M$ carried by a branched manifold $B$.   Then there is also a lamination $L'$ carried by $B$ which is well-covered by a $(\PB_r,\bar\OB_r)$- measured lamination.
 \end{conj}
 
\section{Ordered algebraic structures.}

The goal of this section is to describe ordered abelian semigroups which can be used to define measure theories.  Here the measure takes values in the ordered abelian semigroup.   For this purpose, it is especially useful to study ordered abelian semigroups which also have the least upper bound property.  For further applications, ordered semirings are even more useful.   We begin by defining a particular ordered semiring, namely $\OB$.

\begin{defn}\label{BLDefn}  Let $\OB$ denote the set $\{0\}\cup(\integers\times (0,\infty])$ where $(0,\infty]\subset\bar \reals$ and where $\bar\reals=[-\infty,\infty]$ denotes the extended real line.  We use the
lexicographical order relation on $\integers\times (0,\infty]$, so that $(i,t)<(j,s)$ either if $i<j$ and $s,t\in (0,\infty]$ or if $i=j$ and $t<s$.   The element $0\in\OB$ is a least element, $0<(i,t)$ for all $(i,t)\in(\integers\times (0,\infty])$.
We make $\OB$ a topological space with the order topology. 
 
 The elements $(i,\infty)$ are called {\it infinities} of $\OB$.  Define a commutative {\it addition} operation 
on $\OB$ by $$(i,t)+(j,u)
=\begin{cases}(i,t+u) & \mbox{if } i=j \\
                     ( i,t) & \mbox{if } i>j 
                                            \end{cases}
$$
$$    (i,t)+0=(i,t),$$
\noindent with the convention $\infty+a=a+\infty=\infty$ for any $a\in (0,\infty]$. Define a commutative {\it multiplication} on $\OB$ by 
$$(i,t)(j,u)=(i+j, tu),$$ $$0(i,t)=(i,t)0=0,$$ with the convention that $a\infty=\infty a=\infty$ for any $a\in (0,\infty]$.  

We say $(i,t)\in (\integers\times (0,\infty])$ is {\it real} if $i=0$.  When we denote an element of $\OB$ by a single symbol $x=(i,t) \in \OB\setminus \{0\}$, we will use $\level(x)=i$ to denote the {\it level of $x$} and $\real(x)=t$ to denote the {\it real part of $x$}, or  {\it residue of $x$}, which lies in $(0,\infty]$.   We make the convention that $\real(0)$ and $\level(0)$ are undefined.
\end{defn}

We observe that the subset of $\OB$ which we identify with the positive extended reals, namely $\{(0,t)\in \OB:t \in (0,\infty]\}=\{x\in \OB:\level(x)=0\}$, has the usual addition and multiplication of the positive extended reals, and also 
has the usual (order) topology.

It is routine to verify the following:

\begin{lemma} $\OB$ is an ordered commutative semiring with multiplicative identity $(0,1)$ and additive identity $0$.  \end{lemma}

Figure \ref{OrderParameter} shows $\OB$ as a topological space with the order topology.

For some applications it is useful to extend $\OB$ slightly.

\begin{defn}  Let $\bar\OB$ denote $\OB\cup\{\Infty\}$ with operations and the order in $\OB$ extended in the obvious way: $\Infty>x$ for all $x\in \OB$.  $\Infty+x=x+\Infty=\Infty$ for all $x\in \OB$.  Finally, $\Infty\cdot x=x\cdot \Infty=\Infty$ unless $x=0$, and $\Infty\cdot 0=0\cdot \Infty=0$.  We make the convention that $\level(\Infty)$ is also undefined, and otherwise $\level(x)$ is defined as for $\OB$.
\end{defn}

To avoid confusion, we define the terms ``ordered abelian semigroup," ``ordered commutative semiring" and ``ordered semifield" as used in this paper.

\begin{defn}  An {\it ordered abelian semigroup} is a set G equipped with a binary operation +, a total ordering $<$ and an element $0$ satisfying the following axioms for any $a,b,c\in G$:

\begin{tightenum}
\item $(a + b) + c = a + (b + c)$
\item $0 + a = a + 0 = a$
\item $a + b = b + a$
\item 0 is the least element and for all $a$ and $b$, $a+b\ge b$.
\setcounter{zahl}{\value{enumi}}
\end{tightenum}
\end{defn}

\begin{defn}  An {\it ordered commutative semiring} is a totally ordered set $R$, with order $<$, equipped with two binary operations addition, +, and multiplication and with elements 0,1, such that $(R,+)$ is an ordered abelian semigroup and such that the following are satisfied for any $a,b,c\in R$:

\begin{tightenum}
\setcounter{enumi}{\value{zahl}}
\item $ab=ba$
\item $(ab)c = a(bc)$
\item $1a = a1 = a$
\item $a(b + c) = (ab) + (ac)$
\item $0a = a0 = 0$
\setcounter{zahl}{\value{enumi}}
\end{tightenum}

An {\it ordered semifield} is a commutative semiring with the additional property that every non-zero element $a$ has a multiplicative inverse $\bar a$ such that:
\begin{tightenum}
\setcounter{enumi}{\value{zahl}}
\item $a\bar a=\bar a a=1$
\end{tightenum}
\end{defn}

\begin{figure}[H]
\centering
\scalebox{0.8}{\includegraphics{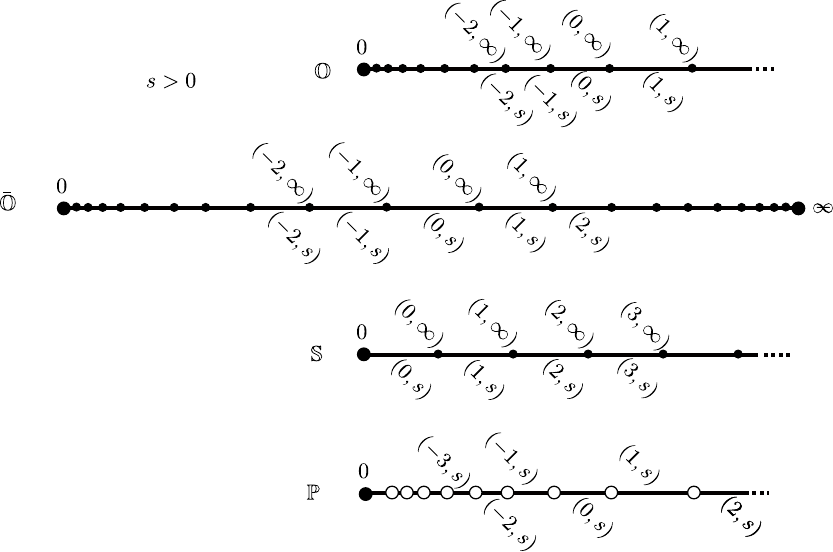}}
\caption{\small The ordered commutative semirings  $\OB$, $\bar\OB$, $\SB$, and $\PB$ viewed as topological spaces.}
\label{OrderParameter}
\end{figure}

Next, we will describe more general constructions for combining {\it ordered algebraic structures}  which can be ordered sets, ordered abelian semigroups,  ordered abelian groups, or ordered commutative semirings, or ordered semifields, to obtain new ordered algebraic structures.

\begin{defn}  Suppose $A$ is an ordered set and suppose $B$ is an ordered abelian semigroup, an ordered commutative semiring, or ordered semifield.  Then we define {\it $A$ insert $B$} as $$ A \ins B=\left[ A\times (B\setminus \{0\})\right]\cup \{0\},$$ which becomes an ordered abelian semigroup with order relation and operations described below.  The zero added to $A\ins B$ is distinct from the zero removed from $B$.  If $A$ is an ordered abelian semigroup and $B$ is an ordered commutative semiring, $A\ins B$ becomes an ordered semiring.  If $A$ is an ordered abelian group and $B$ is an ordered semifield, then $A\ins B$ is an ordered semifield.

We define an order relation $<$ on $A\ins B$ as follows:  

\begin{tightenum}
\item $(g,s)<(h,t)$ if either if $g<h$ or if $g=h$ and $s<t$.
\item $0<(g,s)$ for all $(g,s)$.
\end{tightenum}

Now we define the addition operation on $A\ins B$.  
The commutative {\it addition} operation 
on $A\ins B$ is given by $$(g,s)+(h,t)
=\begin{cases}( g,s) & \mbox{if } g>h \\
                     (g,s+t) & \mbox{if } g=h
                                            \end{cases}
$$
$$0+(g,s)=(g,s)+0=(g,s)$$

In case $B$ is an ordered abelian semigroup, this defines the ordered abelian semigroup $A\ins B$.

Provided $A$ is an ordered abelian semigroup and $B$ is an ordered commutative semiring, we define
 a commutative {\it multiplication} on $A\ins B$ by 
$$(g,s)(h,t)=(g+h, st),$$ $$0(g,s)=(g,s)0=0,$$
to obtain an ordered semiring $A\ins B$.  If $B$ has a multiplicative identity $1$, then $A\ins B$ has a multiplicative identity $(0,1)$.

If $A$ is an ordered abelian group and $B$ is an ordered semifield, then $A\ins B$ is an ordered semifield.  In this case, the multiplicative inverse of $(g,s)$ is $(-g,\bar s)$, where $\bar s$ is the multiplicative inverse of $s$ in $B$.  

In cases where $A\ins B$ is either an ordered abelian semigroup or an ordered commutative semiring we can extend $A\ins B$ by including an infinity, $\Infty$, then we define $A\ens B$ as $A\ins B\cup \{\Infty\}$ and extend the operations and order relation as follows:

For all $x\in A\ens B$, $\Infty>x$, $\Infty+x=x+\Infty=\Infty$.  Finally, for semirings $\Infty\cdot x=x\cdot \Infty=\Infty$ unless $x=0$, and $\Infty\cdot 0=0\cdot \Infty=0$.  The operation $\ens$ is called {\it extended insertion}.

When we denote a non-zero element of $A\ens B$ by a single symbol $x=(g,s)$, we will use $\level(x)=g$ to denote the {\it level of $x$} and $\real(x)=s$ to denote the {\it residue of $x$}, which lies in $B \setminus\{0\}$.   We make the convention that $\real(0)$ and $\level(0)$ is undefined; similarly $\level(\Infty)$, and $\residue(\Infty)$ are undefined.  

In some calculations it is useful to interpret the pair $(g,0))\in A\times B$ as $0\in A\ens B$.

 \end{defn}
 
 We summarize properties of the insertion operation in the following lemma.

\begin{lemma}\label{WellDefinedLemma}  (a) If $A$ is an ordered set and $B$ is an ordered abelian semigroup
then $A\ins B$ and $A\ens B$  are ordered abelian semigroups. 

\noindent (b) If $A$ is an ordered abelian group or ordered abelian semigroup and $B$ is an ordered commutative semiring 
then $A\ins B$ and $A\ens B$ are ordered commutative semirings.    In this case, $\level(xy)=\level(x)+\level(y)$. If $x\ne 0$, $y\ne 0$, then the level function $\level$ in $A\ins B$ satisfies $\level(x+y)=max(\level(x),\level(y))$.

\noindent (c)  If $A$ is an ordered abelian group and $B$ is an ordered semifield
then $A\ins B$ is an ordered semifield. 
\end{lemma}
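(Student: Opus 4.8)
The plan is to run through the axioms for $A\ins B$: first those of an ordered abelian semi-group, then the remaining commutative semi-ring axioms, then (for part (c)) the inverse axiom. The organizing tool is the pair of level identities in part (b): on nonzero elements $\level$ carries $+$ to $\max$ and carries $\cdot$ to the operation of $A$, while the residue $\real$ transforms coordinatewise. Once these are available, nearly every axiom for $A\ins B$ factors into an axiom of $A$ on the level coordinate together with an axiom of $B$ on the residue coordinate, so most of the verification is routine bookkeeping I would not write out.

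First I would dispose of the additive structure, which proves (a). The order $<$ is lexicographic order on $A\times(B\setminus\{0\})$ with a new least element $0$ adjoined, hence total because the orders on $A$ and on $B\setminus\{0\}$ are. Addition is well defined because $B\setminus\{0\}$ is closed under $+$: if $s,t\ne 0$ in $B$ then $s+t\ge t>0$ (since $0$ is the least element of $B$), so $s+t\ne 0$. Commutativity of $+$ is the symmetry of the defining case split together with commutativity in $B$; for associativity I would observe that for nonzero summands the sum, computed either way, equals $\bigl(\max_i\level(x_i),\ \textstyle\sum\real(x_i)\bigr)$ where the sum runs over the summands of maximal level — manifestly independent of the grouping — and the cases involving $0$ are trivial. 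The additive level identity is read off the same case split (using commutativity to reduce $g<h$ to $g>h$), and positivity, $a+b\ge b$, for $A\ins B$ follows by the same three cases, using $\real(a)+\real(b)\ge\real(b)$ in $B$ when the levels agree. (Should one want the $\Infty$-extension $A\ens B$, it adds a top element absorbed by the operations and requires nothing new.)

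Next I would treat multiplication, which proves (b). It is well defined as soon as $B\setminus\{0\}$ is closed under multiplication, i.e.\ $B$ has no zero divisors; this is automatic when $B$ is a semi-field (from $st=0$, $s\ne 0$ one gets $t=\bar s\,s\,t=\bar s\,0=0$) and is easily checked for the semi-rings of interest. Granting it, $(g,s)(h,t)=(g+h,st)$ lies in $A\ins B$, so $\level(xy)=\level(x)+\level(y)$, and $0\cdot x=0$ by definition; commutativity and associativity of $\cdot$ and the identity law $(0_A,1_B)x=x$ reduce coordinatewise to the corresponding facts in $A$ and in $B$. No further order compatibility is needed, since the definition of ordered commutative semi-ring asks nothing of $<$ and $\cdot$ beyond requiring $(R,+)$ to be an ordered abelian semi-group. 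The one substantive axiom is distributivity, $x(y+z)=xy+xz$. With $x=(g,s)$, $y=(h,t)$, $z=(k,u)$ nonzero and, by commutativity of $+$, $h\ge k$: if $h=k$ both sides equal $(g+h,\ st+su)$ by distributivity in $B$; if $h>k$ the left side is $(g+h,st)$, while $xy+xz=(g+h,st)+(g+k,su)$, which collapses to $(g+h,st)$ exactly when $g+h>g+k$.

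The hard part is therefore precisely this last point — that adding a fixed element of $A$ preserves strict inequalities, $h>k\Rightarrow g+h>g+k$ — which is also the only place the hypothesis on $A$ really enters. For $A$ an ordered abelian group it is immediate by cancellation (add $g$, then add $-g$); for an ordered abelian semi-group it must be read off the particular structure, and it holds for the semi-groups used in this paper. With distributivity in hand, (b) is complete. Part (c) is then quick: when $A$ is a group and $B$ a semi-field, every nonzero $(g,s)$ has the two-sided inverse $(-g,\bar s)$, since $(g,s)(-g,\bar s)=(g+(-g),\,s\bar s)=(0_A,1_B)$, the multiplicative identity; this is the inverse axiom, so $A\ins B$ is an ordered semi-field.
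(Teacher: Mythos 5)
The paper states this lemma with no proof at all, so there is nothing to compare your argument against; your verification is correct and follows the only sensible route, checking each axiom coordinatewise via the two level identities. What your write-up adds, and is worth recording, is that you isolate the two places where something beyond bookkeeping is required, and both turn out to be hypotheses the lemma tacitly assumes rather than steps you have skipped. First, for the product $(g,s)(h,t)=(g+h,st)$ to land in $A\ins B$ one needs $st\ne 0$, i.e.\ $B$ must have no zero divisors; the paper's axioms for an ordered commutative semi-ring do not guarantee this (they impose no compatibility between $<$ and multiplication), though it does hold for semi-fields and for every $B$ the paper actually uses. Second, distributivity reduces, as you say, to strict monotonicity of addition in $A$ ($h>k\Rightarrow g+h>g+k$); this is automatic for groups but is not a consequence of the paper's semi-group axioms. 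Indeed $(\naturals_0,\max)$ satisfies all of them, yet with $A=(\naturals_0,\max)$ and $B=[0,\infty]$ one computes $(1,2)\bigl((1,1)+(0,1)\bigr)=(1,2)$ while $(1,2)(1,1)+(1,2)(0,1)=(1,4)$, so part (b) as literally stated fails for arbitrary ordered abelian semi-groups $A$ and needs strict monotonicity of $+$ in $A$ as an explicit extra hypothesis (satisfied by $\naturals_0$ and $\integers$, the only $A$'s used in the paper). With those two provisos made explicit, your proof is complete, including the level identities and part (c); the gaps you flag are gaps in the statement, not in your argument.
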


\begin{remark}   We describe the operation $\ins$ as an ``insertion."  To construct $A\ins B$, we insert a copy of $B\setminus \{0\}$ at every element of $A$, with its order, then add a different $0$ to construct $A\ins B$.   To construct $A\ens B$ we also add a largest element $\Infty$.
 \end{remark}

\begin{exs}
\begin{intenum}
\item  Let $\naturals_0=\naturals\cup \{0\}=\{0,1,2,3,\ldots\}$ and let $\SB$ denote $\naturals_0\ins [0,\infty]$, constructed using the commutative semiring $[0,\infty]\subset \bar\reals$.  Then $\SB$ is an ordered commutative semiring, which is shown as a topological space in Figure \ref{OrderParameter}.   
\item $\integers\ins [0,\infty]$ is the commutative semiring we described above as $\OB$.  Both $\OB$ and $\bar\OB=\integers\ens [0,\infty]$ are shown in Figure \ref{OrderParameter}.   
\item  Since, for example $\integers \ins[0,\infty]$ can be regarded as an ordered abelian semigroup, we can construct a new ordered commutative semiring $(\integers\ins [0,\infty])\ins[0,\infty]$.   This semiring cannot so easily be pictured,

\end{intenum}
\end{exs}

Obviously we can construct examples by iterating $\ins$ or $\ens$ operations as often as we want, with brackets to indicate the order of operations.  Most of these examples do not seem particularly useful.

For applications to measure theory, we will need to define countably infinite sums, at least those sums which arise as sums of measures of a countable collection of disjoint measurable sets.

\begin{defn}  In $\OB$ we can define countably infinite sums of elements $x_n\in \OB$, provided the summands have uniformly bounded levels.  In this case, assuming the maximum of $\level(x_n)$ is $M$, it is natural to define
$\displaystyle \sum_{n=1}^\infty x_n:=\sum_{\level(x_n)=M}x_n$, which makes sense since $\displaystyle \sum_{\level(x_n)=M}x_n=(M,\sum_{\level(x_n)=M}\real(x_n))$, which is expressed in terms of a countable sum of positive extended real numbers.

If levels $\level(x_n)$ are not uniformly bounded, the countable sum does not make sense in $\OB$.   However, in $\bar \OB$, we can, as before, define $\displaystyle \sum_{n=1}^\infty x_n =\sum_{\level(x_n)=M}x_n$ if $\{\level(x_n)\}$ has maximum value $M$.  In case $\{\level(x_n)\}$ is unbounded above, we define $\displaystyle \sum_{n=1}^\infty x_n=\Infty$.
\end{defn}

The above definition can be summarized by saying that $\displaystyle \sum_{n=1}^\infty x_n$ is the least upper bound of partial sums whenever the least upper bound exists.

We shall see that we can evaluate countable sums in many other ordered algebraic structures.

\begin{defn}  An ordered algebraic structure $A$ has the {\it least upper bound property (lub property)} if every set $S\subset A$ which is bounded above has a least upper bound $\sup(S)$.  
\end{defn}

\begin{proposition} \label{LUBProp}Suppose $A$ is an ordered set with the property that every bounded non-empty set has a greatest element.  Suppose $B$ is an ordered abelian semigroup.  Suppose $B$ has the least upper bound property and either:
\begin{tightenum}
\item $B$ has a greatest element $\infty$, or
\item $B$ has a least element $p>0$,  and $A$ has the property that every non-empty set which is bounded below has a least element.
\end{tightenum}

\noindent Then $A\ins B$ have the least upper bound property, and also $A\ens B$ has the least upper bound property.
\end{proposition}

\begin{proof}
Suppose $A$ and $B$ are as in the statement.  We will show that $A\ins B$ has the lub property.  Suppose $S\subset A\ins B$ is bounded above.  If $S=\{0\}$, then the least upper bound is $0$, so we may assume $S$ contains elements other than 0.  Then $$T=\{a\in A: \text{ there exists } b\in B \text{ such that } (a,b)\in S\}$$ is bounded above and has a greatest element $M$.  Let $U=\{b\in B: (M,b)\in S\}\ne\emptyset$. 

\noindent {\it Case (i).} If $B$ has a greatest element $\infty$, since $B$ has the lub property and $U$ is bounded above by $\infty$, $U$ has an lub  $N$ say.  Then $(M,N)$ is the lub for $S$.  

\noindent{\it Case(ii)}  If $U$ is bounded above, again let $N$ be the least upper bound of $U$.   As before $(M,N)$ is the lub.   

Now we assume $U$ is not bounded above in $B$.  We first consider the subcase that $M$ is not the greatest element of $A$ (which is guaranteed if $A$ has no greatest element).   Then let $M'$ be the least element of $A$ greater than $M$.  Clearly then $(M',p)$ is the least upper bound of $S$.  In the remaining case, $M$ is the greatest element of $A$.   Then $U$ is bounded above if and only if $S$ is bounded above.   Thus $U$ is bounded above, and we are in a case we already considered:  letting $N$ be the lub of $U$, $(M,N)$ is the lub of $S$.

We must also show that $A\ens B$ has the least upper bound property.  Suppose $S\subset A\ens B$.   If $\Infty\in S$, then $\Infty$ is the lub of $S$.   Otherwise $S\subset A\ins B\subset A\ens B$.   If $S$ is bounded above by an element of $A\ins B$, we have already shown it has a least upper bound.  Otherwise, the only upper bound of $S$ is $\Infty$, which shows that $\Infty$ is the lub of $S$. 
\end{proof}

\begin{defn}  Suppose $\LB$ is any ordered abelian semigroup with the lub property.   Suppose $x_n\in \LB$ and the partial sums of $\displaystyle \sum_{n=1}^\infty x_n$ are bounded above.   Then $\displaystyle \sum_{n=1}^\infty x_n$ is defined to be the lub of the partial sums.
\end{defn}

\begin{proposition} \label{SummableProp}  Suppose $A$ is an ordered set with the property that every non-empty set which is bounded above has a greatest element.  Suppose $B$ has the lub property.  Suppose $x_i\in \LB$ where $\LB=A\ins B$ or $\LB=A\ens B$.  Let $S_n$ be the $n$-th partial sum of $\displaystyle \sum_{i=1}^\infty x_i$ and suppose $\{S_n\}$ is uniformly bounded.  If $\{\level(S_n)\}$ is uniformly bounded, let $M$ be the largest element, then $\displaystyle \sum_{i=1}^\infty x_i=\left(M,\sum_{\level(x_i)=M}\real(x_i)\right)$.   Otherwise $\displaystyle \sum_{i=1}^\infty x_i=\Infty$.
\end{proposition}

\begin{proof}  If $\{\level(S_n)\}$ is uniformly bounded, with largest element $M$, then $\displaystyle \sum_{i=1}^\infty x_i=\lub\{S_n\}=\lub\{S_n:\level(S_n)=M\}=(M,\lub
\{\real(S_n)\})=\left(M,\sum_{i=1}^\infty \real(x_i)\right)$.  Otherwise  $\lub\{S_n\}=\Infty$.
\end{proof}

\begin{corollary}\label{NNCor}   The ordered abelian semigroup $\SB=\naturals_0\ins [0,\infty]$ has the lub property.
\end{corollary}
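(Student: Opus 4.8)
I would deduce both statements from Lemma \ref{LUBLemma}, so the work is purely hypothesis–checking together with one structural observation about how to run the induction.

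For $\SB=\naturals_0\ins[0,\infty]$: apply Lemma \ref{LUBLemma} with $A=\naturals_0$ and $B=[0,\infty]\subset\bar\reals$. Every bounded nonempty subset of $\naturals_0$ is finite, hence has a greatest element, so $A$ meets the standing hypothesis on the first factor. And $B=[0,\infty]$ is an ordered commutative semi-ring with the least upper bound property (indeed every subset of $[0,\infty]$ has a supremum) possessing a greatest element $\infty$, so alternative (i) of the lemma holds. Lemma \ref{LUBLemma} then yields that $A\ins B=\SB$ has the lub property.

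For $\bigsins_{i=1}^n\naturals_0$ I would induct on $n$. The base case $n=1$ is again just the remark that bounded nonempty subsets of $\naturals_0$ are finite. For the inductive step I would \emph{not} keep the iterated product in the ``$A$'' slot of Lemma \ref{LUBLemma}; instead, using associativity of $\sins$, rewrite $\bigsins_{i=1}^n\naturals_0\isom\naturals_0\sins\bigl(\bigsins_{i=2}^n\naturals_0\bigr)$ and apply the lemma with $A=\naturals_0$ and $B=\bigsins_{i=2}^n\naturals_0$. Here $A=\naturals_0$ has the greatest-bounded-element property, and moreover every nonempty subset of $\naturals_0$ has a least element; by the inductive hypothesis $B$ has the lub property, and $B$ (being $\naturals_0^{\,n-1}$ under the lexicographic order, hence well ordered) has a least positive element $p=(0,\dots,0,1)$. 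Thus alternative (ii) of Lemma \ref{LUBLemma} applies, and $A\sins B\isom\bigsins_{i=1}^n\naturals_0$ has the lub property; since the lub property is invariant under order isomorphism, the induction closes.

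The only point that needs care — and the reason for the associativity rewrite — is that one cannot carry $\bigsins_{i=1}^{n-1}\naturals_0$ in the ``$A$'' position of the lemma: for $n-1\ge 2$ this fails the requirement that every bounded nonempty set have a greatest element (for instance $\{(0,k):k\in\naturals_0\}\subset\naturals_0\sins\naturals_0$ is bounded above by $(1,0)$ but has no maximum). Peeling off a single well-ordered factor $\naturals_0$ into the $A$ slot and pushing the inductively controlled factor into the $B$ slot, where only the lub property (plus a least positive element) is demanded, sidesteps this. I expect that to be the sole subtlety; everything else is routine verification of the hypotheses of Lemma \ref{LUBLemma}.
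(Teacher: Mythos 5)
Your proof is correct and follows essentially the same route as the paper: both apply Lemma \ref{LUBLemma}(ii) inductively with $A=\naturals_0$ and $B=\bigsins_{i=1}^{n-1}\naturals_0$, using the least positive element $(0,\dots,0,1)$, and both handle $\SB$ via case (i) with the greatest element $\infty$ of $[0,\infty]$. Your explicit appeal to the associativity lemma to justify peeling off a single $\naturals_0$ into the $A$ slot, and your remark on why the iterated product cannot sit in that slot, are careful points that the paper leaves implicit.
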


\begin{proof}  To prove the lub property observe that $[0,\infty]$ has a greatest element $\infty$, so we can apply Lemma \ref{LUBProp} in case (i).   \end{proof}

More generally, we have:

\begin{lemma} If $[0,\infty]$ denotes the interval in $\bar\reals$, then 
$$\bar\SB_r=(\naturals_0\ens(\naturals_0\ens\cdots\ens(\naturals_0\ens(\naturals_0\ens [0,\infty]))\cdots)),$$ with $r$ insertions in $\naturals_0$, is an ordered abelian semiring with the lub property, and a greatest element. 
\hop

\noindent Similarly $$\bar\OB_r=(\integers\ens(\integers\ens\cdots\ens(\integers\ens(\integers\ens [0,\infty]))\cdots)),$$ with $n$ insertions in $\integers$, is an ordered abelian semiring with the lub property  and a greatest element.
\end{lemma}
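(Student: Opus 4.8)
The plan is to induct on $n$. It is cleanest to prove the following slightly more uniform statement: setting $R_0=[0,\infty]$ and $R_{k+1}=\naturals_0\ens R_k$, every $R_k$ is an ordered commutative semi-ring that has a greatest element, has the lub property, and is summable; then $\bar\SB_n=R_n$, and in particular $\bar\SB=R_1$. (The argument for the sequence $R_0=[0,\infty]$, $R_{k+1}=\integers\ens R_k$, whose $n$-th term is $\bar\OB_n$, will be identical, as noted at the end.) For the base case, $[0,\infty]$ with the usual operations is an ordered commutative semi-ring, it is a complete lattice so it has the lub property, $\infty$ is its greatest element, and any countable sum of non-negative extended reals evaluates in $[0,\infty]$, so it is summable.

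For the inductive step, I would assume $R_k$ has all four properties. For the semi-ring structure, apply Lemma \ref{WellDefinedLemma}(b) with $A=\naturals_0$ (an ordered abelian semi-group) and $B=R_k$ (an ordered commutative semi-ring): $\naturals_0\ins R_k$ is an ordered commutative semi-ring. Passing from $\ins$ to $\ens$ by adjoining $\Infty$ preserves this; the only axiom that is not immediate is distributivity $\Infty(b+c)=\Infty b+\Infty c$, which holds because $b+c\neq 0$ precisely when $b\neq 0$ or $c\neq 0$ (recall $b+c\ge b$). So $R_{k+1}$ is an ordered commutative semi-ring with greatest element $\Infty$. For the lub property and summability, apply Lemma \ref{LUBLemma}: $A=\naturals_0$ has the property that every bounded non-empty subset has a greatest element, and $B=R_k$ has the lub property together with a greatest element, so we are in case (i); the lemma gives that $\naturals_0\ins R_k$ has the lub property and that $R_{k+1}=\naturals_0\ens R_k$ is summable. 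Adjoining $\Infty$ preserves the lub property: a non-empty $S\subseteq R_{k+1}$ either contains $\Infty$, or lies in $\naturals_0\ins R_k$ and is bounded above there, or lies in $\naturals_0\ins R_k$ and is unbounded there, and in the three cases $\sup S$ is $\Infty$, the supremum computed in $\naturals_0\ins R_k$, and $\Infty$ respectively. This closes the induction, and $\bar\SB_n=R_n$ gives the first assertion, with $\bar\SB=\bar\SB_1$ yielding that $\bar\SB$ is summable.

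The statement for $\bar\OB_n$ then follows by running the same induction with $\integers$ in place of $\naturals_0$; the only fact about $\integers$ that is used is that every non-empty subset which is bounded above has a maximum, which is standard, and then $\bar\OB=\bar\OB_1$ gives the last sentence. There is no serious obstacle, but the point requiring care is the $\ens$-versus-$\ins$ bookkeeping: one must check at each stage both that adjoining $\Infty$ leaves the semi-ring axioms and the lub property intact, and, more importantly, that the greatest-element hypothesis of Lemma \ref{LUBLemma}(i) really is available. For $\integers$ this is essential, since $\integers$ does not have the property that every non-empty subset has a least element, so case (ii) of Lemma \ref{LUBLemma} cannot be used; it is precisely in order to supply the greatest element needed at every step of the induction that the construction is carried out with $\ens$ rather than $\ins$.
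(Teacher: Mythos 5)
Your proof is correct and follows essentially the same route as the paper's: induction on the number of insertions, invoking Lemma \ref{LUBLemma} case (i) at each stage with $A=\naturals_0$ (respectively $A=\integers$), the greatest element supplied by the $\ens$ construction being exactly what makes case (i) applicable. You are somewhat more careful than the paper about the details it glosses over --- that adjoining $\Infty$ preserves the semi-ring axioms and the lub property, and that case (ii) is unavailable for $\integers$ --- but these are refinements of the same argument rather than a different approach.
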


\begin{proof}  We can prove that $\bar \SB_r$ has the lub property using induction starting with $\naturals_0\ens[0,\infty]$.  Clearly $[0,\infty]$ has a greatest element and has the lub property.  Now we inductively apply $\naturals_0\ens$ to the previous result, and at every step of the induction apply Lemma \ref{LUBProp} (i) using $A=\naturals_0$.  

Essentially the same proof works for the second statement.
\end{proof}

\begin{proposition}  $\displaystyle (\naturals_0\ins(\naturals_0\ins\cdots\ins(\naturals_0\ins(\naturals_0\ins \naturals_0))\cdots))$ has the lub property and a least positive element. $\displaystyle (\naturals_0\ens(\naturals_0\ens\cdots\ens(\naturals_0\ens(\naturals_0\ens \naturals_0))\cdots))$ has the lub property, a least positive element, and a greatest element.
\end{proposition}
\begin{proof}  For the first statement, we apply Lemma \ref{LUBProp} (ii)  inductively to prove the lub property.   For the second statement, we use  Lemma \ref{LUBProp} (i) inductively. In this case, the fact there is a greatest element in the second ordered abelian group is obvious.
\end{proof}

\begin{proposition} $\PB=\integers\ins [0,\infty)$ is an ordered semifield.    Also,
$$\PB_r=(\integers\ins(\integers\ins\cdots\ins(\integers\ins(\integers\ins [0,\infty)))\cdots)),$$ with $n$ insertions in $\integers$ is an ordered semifield.  None of these have the lub property.  
\end{proposition}

\begin{proof} We prove this by induction starting from the fact that $[0,\infty)$ is an ordered semifield.   If we know that $\PB_{r-1}$ is an ordered semifield
it follows by Lemma \ref{WellDefinedLemma}(c) that $\PB_r=\integers\ins \PB_{r-1}$ is an ordered semifield.
\end{proof}

$\PB$ is a sub-semiring of $\OB$, see Figure \ref{OrderParameter}.  
For fixed $i\in \integers$ the set $S=\{(i,t):0<t<\infty\}$ is bounded above by $(i+1,1)$, but it does not have a least upper bound.  So $\PB$ certainly does not have the least upper bound property.   We use the symbol $\PB$ for this ordered semifield because it can be used for probability calculations, see Section \ref{Probability}.  As we observed in the introduction, $\PB$ also plays a role in the description of transverse measures for a lamination.  

The following gives another large class of ordered abelian semigroups with the lub property, which can be used for measure theories, possibly useful for measures on large cardinality sets.  This is an immediate corollary of Proposition \ref{LUBProp}(i).

\begin{corollary}\label{WellOrderCor} Suppose $A$ is a well-ordered set and $B$ is an ordered abelian semigroup with the lub property.  We reverse the order on $A$, so it has the property that any non-empty subset has a greatest element.  Suppose also that $B$ has a greatest element $\infty$.
Then $A\ins B$  has the lub property and a greatest element.
\end{corollary}

Notice that in Corollary \ref{WellOrderCor}, since the ordered abelian semigroup $A\ins B$ has a greatest element, nothing is gained by considering $A\ens B$.

\begin{proposition}  Suppose $A_1,A_2,\ldots, A_r$ are well-ordered sets, each with its ordering reversed.   Then the ordered abelian semigroup $A_1\ins( A_2\ins\cdots (A_r\ins [0,\infty])\cdots))$ has the lub property.

\end{proposition}

Here is another construction for ordered abelian semigroups with the lub property.

\begin{lemma}  Suppose $A$ is an ordered set with the lub property and $B$ is an ordered abelian semigroup with the lub property and a least element $p>0$.   Then $A\ins B$ and $A\ens B$ have the lub property.
\end{lemma}

\begin{proof}  We first prove the statement for $A\ins B$.   Suppose $S$ is a non-empty set in $A\ins B$.   If $S=\{0\}$, then the least upper bound is $0$, so we may assume $S$ contains elements other than 0.  Let $M=\lub \{\level(x):x\in S\}$.  Let  $U=\{b\in B: (M,b)\in S\}$.  If $U$ is non-empty, let $N=\lub( U)$, then $\lub (S)=(M,N)$.  If $U$ is empty, then $\lub (S)=(M,p)$.   It remains to consider $A\ens B$.    If $\Infty\in S$, then $\Infty$ is the lub of $S$.   Otherwise $S\subset A\ins B\subset A\ens B$.   If $S$ is bounded above by an element of $A\ins B$, we have already shown it has a least upper bound.  Otherwise, the only upper bound of $S$ is $\Infty$, which shows that $\Infty$ is the lub of $S$. 
\end{proof}

\begin{proposition} The ordered abelian semirings $\TB_r=(\cdots(([0,\infty]\ins\naturals_0)\ins\naturals_0)\cdots\ins\naturals_0)$ (with $\naturals_0$ inserted $r$ times) have the lub property.   Also $\bar\TB_r=(\cdots(([0,\infty]\ens\naturals_0)\ens\naturals_0)\cdots\ens\naturals_0)$ has the lub property.
\end{proposition}

\begin{proof}  Apply the above lemma inductively.
\end{proof}

\hop

It is natural to ask whether in some sense the $\ins$ operation is associative.  The following example shows that is not the case.

\begin{example}\label{AssociativeExample}  Consider the two ordered semirings $\naturals_0\ins(\naturals_0\ins\naturals_0)$ and $(\naturals_0\ins\naturals_0)\ins\naturals_0$.   One might hope that $\psi((i,(j,k)))=((i,j),k)$, $\psi(0)=0$ defines an isomorphism of ordered semirings, $\psi:\naturals_0\ins(\naturals_0\ins\naturals_0)\to (\naturals_0\ins\naturals_0)\ins\naturals_0$.    That is not the case.    For example in the first group $\naturals_0\ins(\naturals_0\ins\naturals_0)$ we have $(1,(1,1))\cdot (2,(1,1))=(3,(2,1))$ whereas in the second group $(\naturals_0\ins\naturals_0)\ins\naturals_0$ we have 
$((1,1),1)\cdot ((2,1),1)=((2,1),1)$.  In fact, $\psi$ is not even well-defined, since for example, $\psi(0,(0,1))=((0,0),1)$ and $(0,0)\notin \naturals_0\ins\naturals_0$, so $((0,0),1)\notin (\naturals_0\ins\naturals_0)\ins\naturals_0$.
\end{example}

We need a notation for the elements of $\bar\OB_r$ and $\PB_r$ in order to prove some results which we will need later in the paper.  

\begin{defn}\label{ONotation}  We write the elements of $\bar\OB_r$ in the form \\
\centerline{$\bar\OB_r=\left\{(i_1,i_2,\ldots, i_s,t):s\le r,\ t\in (0,\infty] \text{ if } s=r, \ t=\infty \text{ if } s<r\right\}.$}
We say the {\it level} of an element $x=(i_1,i_2,\ldots, i_s,t)$ is $\level(x)=(i_1,i_2,\ldots, i_s)$, the {\it residue} is $\residue(x)=t$.  The elements of the form $(i_1,i_2,\ldots, i_s,\infty)$ for $0\le s\le r$ are called {\it infinities}.  
\end{defn}

When $s=0$, with this notation the level of $(\infty)$, the largest element of $\bar\OB_r$ is the empty sequence of integers, so in a sense this agrees with our previous convention that the level is undefined.

When we use the notation of Definition \ref{ONotation} for $\PB_r\subset\bar\OB_r$, the infinities are absent and we get\\
\centerline{$\PB_r=\left\{0\right\}\cup\left\{(i_1,i_2,\ldots, i_r,t):\ t\in (0,\infty)\right\}.$}

Note that $(i_1,i_2,\ldots, i_s,\infty)$ is ``the infinity that appears after $r-s$ insertions."  More precisely, if $s<r$ and if we write $\bar\OB_r=\integers\ens(\integers\ens(\integers\cdots\ens(\integers\ens\bar\OB_{r-s})\cdots))$, with $s$ $\integers$'s, then $(i_1,i_2,\ldots, i_s,\infty)$ is the infinity for the $\bar\OB_{r-s}$ inserted at level $(i_1,i_2,\ldots, i_s)$ .  In particular, if $s=0$, $(\infty)$ is the largest element in $\bar\OB_r$.   When we use this notation, we no longer need to use the notation $\Infty$.  The infinities $(i_1,i_2,\ldots,i_s,\infty)$ are ordered lexicographically in $\bar\OB_r$ but not quite as in a dictionary:  a shorter string with the initial letters of a longer string is larger.   Thus $(2,1,3,\infty)>(2,1,3,5,\infty)$.   If we order levels in the same way, skipping the last entry, then the level function $\level$ preserves order.  We want a better description of the addition and multiplication operations in $\bar\OB_r$.   Unfortunately, these descriptions are awkward, involving some further definitions as follows.

\begin{defn}  The set of levels $(i_1,i_2, \ldots, i_s)$, $0\le s\le r$ for elements of $\bar\OB_r$ is ordered as follows.  When we compare $(i_1,\ldots,i_p)$ and $(j_1,\ldots, j_q)$ we let $k=\min\{p,q\}$.   Then
$(i_1,\ldots,i_p)<(j_1,\ldots, j_q)$ if  $(i_1,\ldots,i_k)<(j_1,\ldots, j_k)$ in the lexicographical ordering or if $q<p$ and $i_1=j_1,\ i_2=j_2,\ \ldots,\  i_q=j_q$.

We also define an addition operation on the set of levels of $\bar\OB_r$.  Namely, $(i_1,\ldots,i_p)+(j_1,\ldots, j_q)=(i_1+j_1,i_2+j_2,\ldots,i_k+j_k)$.
\end{defn}

We observe that the order relation we defined above on levels of $\bar\OB_r$ is just the order relation from the ordered abelian semigroup  $\integers\ens(\integers\ens\cdots\ens(\integers\ens\{0\})\cdots)$, with $r$ copies of $\integers$, where $\{0\}$ is the trivial semigroup.   The elements of  $\integers\ens(\integers\ens\cdots\ens(\integers\ens\{0\})\cdots)$ have the form $(i_1,i_2,\ldots,i_s, \infty)$ where $s\le r$.   Ignoring the last entry, we get the levels of $\bar\OB_r$.  The addition operation on levels of $\bar\OB_r$ {\it does not} correspond to the addition operation in $\integers\ens\integers\ens\cdots\ens\integers\ens\{0\}$.

\begin{proposition} Writing an arbitrary element of $x\in\bar\OB_r$  as $x=(\level(x),\residue(x))$.  

 \noindent (a) If $x\ne 0$ and $y\ne (\infty)$, the order relation on $\bar\OB_r$ can be expressed as 

 $$x<y \mbox{ if } \level(x)<\level(y)\mbox{ or } \level(x)=\level(y) \mbox{ and } \residue(x)<\residue(y).  
$$

\noindent Also $0< y$ for any $y\ne 0$, and $x <(\infty)$ for any $x\ne(\infty)$.

\noindent (b) If $x\ne 0,(\infty)$ and $y\ne 0,(\infty)$, the $+$ operation in  $\bar\OB_r$ can be expressed as:
 
  $$x+y
=\begin{cases} \max(x,y) & \mbox{if } \level(x)\ne \level(y) \\
                     (\level(x),\residue(x)+\residue(y)) & \mbox{if } \level(x)= \level(y)
                                            \end{cases}.
$$

\noindent Also, if $x=0$, $x+y=y$; and if $y=(\infty)$, $x+y=(\infty)$.

\noindent (c)  If $x\ne 0,(\infty)$ and $y\ne 0,(\infty)$, the product can be defined by
 $$xy
=\left(\level(x)+\level(y), \residue(x)\residue(y)\right).
$$

\noindent  Also, if $x=0$ or $y=0$, $xy=0$; and if $x\ne 0$ and $y=(\infty)$, the $xy=(\infty)$.
\end{proposition}

\begin{proof}  We use induction on $r$ in each part.

 (a) For $\bar\OB=\bar\OB_1$ the statement is true by the definition of the order in $\bar\OB= \integers\ens[0,\infty]$.  This begins our induction.   Suppose the statement is true for $\bar\OB_{r-1}$.   Now consider $\bar\OB_r=\integers\ens\bar\OB_{r-1}$, and let $x,y\in \bar\OB_r$.  Initially, we assume neither $x$ nor $y$ is $0$ or $(\infty)$, so we assume $x=(i_1,\hat x)\in\integers\ens\bar\OB_{r-1}$ and $y=(j_1,\hat y)$.  Then we may assume $x=(i_1,(i_2,i_3,\ldots, i_p,t))$ with $p\le r$, and $y=(j_1,(j_2,j_3,\ldots j_q,u))$ with $q\le r$.  Using the definition of the order in $\bar\OB_r=\integers\ens\bar\OB_{r-1}$, we have $x<y$ if $i_1<j_1$ or $i_1=j_1$ and $\hat x<\hat y$.  
 
 If $i_1<j_1$, then $(i_1,i_2,\ldots, i_p)<(j_1,j_2,\ldots, j_q)$, so $\level(x)<\level(y)$.  
 
 If $i_1=j_1$, then $x<y$ if $\hat x<\hat y$.   But, by our induction hypothesis, we know how to check whether $\hat x<\hat y$.  We know $\hat x<\hat y$ means $(i_2,\ldots, i_p,t)<(j_2,\ldots, j_q,u)$, which is true if $\level(\hat x)<\level(\hat y)$ or $\level(\hat x)=\level(\hat y)$ and $t<u$.  The condition $\level(\hat x)<\level(\hat y)$ is true if $q<p$ or $(i_2,\ldots, i_k)<(j_2,\ldots, j_k)$, where $k=\min(p,q)$.   This is the same as saying  $(i_1,i_2,\ldots, i_p)<(j_1,j_2,\ldots, j_q)$  (assuming $i_1=j_1$). Hence, we have $\level(x)<\level(y)$ if $\level(\hat x)<\level(\hat y)$.   If $\level(\hat x)=\level(\hat y)$, then clearly $\level(x)=\level(y)$ and $t<u$.   In other words $\level(x)=\level(y)$ and $\residue(x)<\residue(y)$.
 
 So far, we have not mentioned the possibility that $x=0$ or $y=(\infty)$.  Since $0$ is the smallest element in $\bar\OB_r$ and $(\infty)$ is the largest, the ordering is clear in these cases.
 
 (b)  The induction for the second statement, involving the addition operation is similarly tedious.   If $x$ and $y$ belong to $\bar\OB=\bar\OB_1$, the statement is true by the definition of the addition operation in $\bar\OB=\integers\ens [0,\infty]$.  Suppose the statement is true for $\bar\OB_{r-1}$.   We assume first that $x\ne 0$, $y\ne 0$, $x\ne (\infty)$, $y\ne (\infty)$.  Again let $x=(i_1,\hat x)\in\integers\ens\bar\OB_{r-1}$ and $y=(j_1,\hat y)$.  Then by the definition of the addition operation in $\integers\ens\bar\OB_{r-1}$, we have 
 
 $$x+y= \begin{cases}(i_1,\hat x) & \mbox{if } i_1>j_1\\
                     ( i_1,\hat x+\hat y) & \mbox{if } i_1=j_1.
                                            \end{cases}
$$

\noindent  In the first case, $i_1>j_1$, so $\level(x)>\level(y)$, so the above formula agrees with the statement.  In the second case, $i_1=j_1$, if $\level(\hat x)>\level(\hat y)$, we conclude $\level(x)>\level(y)$ and the formula gives $x+y=(i_1,\hat x)=x$.  In the second case, if $\level(\hat x)=\level(\hat y)$, we conclude $\level(x)=\level(y)$, and $x+y=(i_1,(\level(\hat x), \residue(\hat x)+\residue(\hat y)))=(\level(x),\residue(x)+\residue(y))$, so again we get the desired answer.  

It is easy to verify the statement in the remaining cases, where $x=0$, $y= 0$, $x=(\infty)$, or $y= (\infty)$. 

(c)  Again, the interpretation of the multiplication operation in the statement follows immediately from the definition of multiplication in $\bar\OB_1=\bar\OB=\integers\ens [0,\infty]$.  This starts the induction.   Suppose the statement is true for $\bar\OB_{r-1}$.   We assume first that $x\ne 0$, $y\ne 0$, $x\ne (\infty)$, $y\ne (\infty)$.  Again let $x=(i_1,\hat x)\in\integers\ens\bar\OB_{r-1}$ and $y=(j_1,\hat y)$.  Then by the definition of the multiplication operation in $\integers\ens\bar\OB_{r-1}$, we have 
 
 $$xy=(i_1 +j_1,\hat x\hat y).$$
 
 \noindent By the induction hypothesis, $\hat x \hat y=(\level(x)+\level(y),\residue(\hat x)\residue(\hat y))$, so 
 
 $$xy=(i_1 +j_1,\level(\hat x)+\level(\hat y),\residue(\hat x)\residue(\hat y)).$$

The addition of levels was defined so that this yields 

 $$xy=(\level(x)+\level(y),\residue( x)\residue(y)).$$
\noindent 
It is easy to verify the statement in the remaining cases, where $x=0$, $y= 0$, $x=(\infty)$, or $y= (\infty)$.

\end{proof}

\begin{defn}  If $\KB$, $\LB$  are ordered abelian semigroups, an {\it ordered abelian semigroup homomorphism from $\KB$ to $\LB$} is a map $\psi:\KB\to \LB$ satisfying:  (i) $\psi(0)=0$,   (ii)  $\psi(x+y)=\psi(x)+\psi(y)$ and (iii)  if $x<y$ then $\psi(x)<\psi(y)$.  An {\it ordered abelian semigroup automorphism of $\LB$} is an an ordered abelian semigroup homomorphism from $\psi:\LB \to\LB$ which has an inverse homomorphism.  We will denote the group of ordered abelian semigroup automorpisms of $\LB$ by $\Aut(\LB)$.
\end{defn}

Every positive scalar $\lambda\in (0,\infty)$ yields an ordered abelian semigroup automorphism $\psi_\lambda:[0,\infty]\to[0,\infty]$, $\psi_\lambda(x)=\lambda x$.   In fact, $\reals_+=(0,\infty)$ with the multiplication operation is isomorphic to the group of ordered abelian semigroup automorphisms.   The following proposition gives the analogous statement for $\bar\OB_r$.   We emphasize the automorphisms $\bar\OB_r$ we are discussing here {\it do not respect the multiplication operation} in $\bar\OB_r$.  

\begin{proposition}\label{OrderAutoProp} For every $\lambda\in \PB_r$ there is an ordered abelian semigroup automorphism $\psi_\lambda:\bar\OB_r\to \bar\OB_r$ and the mapping $\lambda \mapsto \psi_\lambda$ is a group isomorphism from $(\PB_r,\cdot)\to \Aut(\bar\OB_r)$.   \end{proposition}

Thus $\PB_r$ can be identified with the group of ordered abelian semigroup automorphisms of $\bar\OB_r$.   

\begin{proof}  First we verify that $\psi_\lambda$ is an automorphism.   Obviously $\psi_\lambda(0)=0$.   Next we check $\psi_\lambda(x+y)=\lambda(x+y)=\lambda x+\lambda y=\psi_\lambda(x)+\psi_\lambda(y)$ using the distributive law for the multiplication in $\bar\OB_r$.   Finally we verify that $\psi_\lambda$ respects the order.   If $x<y$ and $x\ne 0$, $y\ne (\infty)$, then $\level(x)<\level(y)\mbox{ or } \level(x)=\level(y) \mbox{ and } \residue(x)<\residue(y)$.  We compare levels and residues of $\psi_\lambda(x)=\left(\level(\lambda)+\level(x), \residue(\lambda)\residue(x)\right)$ and $\psi_\lambda(y)=\left(\level(\lambda)+\level(y), \residue(\lambda)\residue(y)\right)$.   If the levels of $x$ and $y$ are equal and $\residue(x)<\residue(y)$, then $x$ is not an infinity, then the levels of $\psi_\lambda(x)$ and $\psi_\lambda(y)$ are equal and $\residue(\psi_\lambda(x))= \residue(\lambda)\residue(x)<\residue(\lambda)\residue(y)=\residue(\psi_\lambda(y))$, because $\residue(x)$ cannot be an $\infty$.  Assuming $\level(x)<\level(y)$, if we show $\level(\lambda)+\level(x)<\level(\lambda)+\level(y)$, that is enough to show  $\psi_\lambda(x)< \psi_\lambda(y)$.   Since $\lambda\in \PB_r$, it has the form $(m_1,\ldots, m_r,t)$, where $0<t<\infty$, whereas $\level(x)$ has the form $(i_1,\ldots,i_p)$ with $p\le r$, while $\level(y)$ has the form  $(j_1,\ldots,j_q)$, with $q\le r$.  So $\level(\lambda)+\level(x)=(m_1+i_1,\ldots, m_p+i_p)<\level(\lambda)+\level(y)=(m_1+j_1,\ldots, m_q+j_q)$.

Special cases where $x=0$ and/or $y=(\infty)$ are clear.  
\end{proof}

\hop\hop
\noindent  {\it Mixed insertions for ordered abelian semigroups.}
\hop
We can define more general insertion operations yielding ordered abelian semigroups.  Namely, we can insert different semigroups at different levels.   The reader may skip this discussion, but the resulting ordered algebraic structures may be useful in some applications, e.g. describing laminations with particular kinds of transverse measures with values in ordered abelian semigroups obtained by mixed insertions, see Example \ref{MixedExample}.

\begin{defn}  Suppose $A$ is an ordered set.  For each $a\in A$, let $B_a$ be an ordered abelian semigroup.   Then we define 
$$A\bigins_{a\in A}B_a=\left\{ (a,b): a\in A, b\in (B_a\setminus\{0\})\right\}\cup\{0\}$$
\noindent We make this set into an ordered abelian semigroup by defining the order relation and the addition operation in the usual way:

\begin{tightenum}
\item $(g,s)<(h,t)$ if either if $g<h$ or if $g=h$ and $s<t$.
\item $0<(g,s)$ for all $(g,s)$.
\end{tightenum}
The addition operation is commutative and given by 
 $$(g,s)+(h,t)
=\begin{cases}( g,s) & \mbox{if } g>h \\
                     (g,s+t) & \mbox{if } g=h
                                            \end{cases}
$$
$$0+(g,s)=(g,s)+0=(g,s)$$
If we wish to extend $\displaystyle A\bigins_{a\in A}B_a$ by including an infinity, $\Infty$, then we define 

$$A\bar{\bigins_{a\in A}}B_a=A\bigins_{a\in A}B_a\cup \{\Infty\}$$
and extend the order relation and addition operation as before, so that for all $x\in A\bar\bigins_{a\in A}B_a$, $\Infty>x$, $\Infty+x=x+\Infty=\Infty$.  

As before, when we denote a non-zero element of $\displaystyle A\bar\bigins_{a\in A}B_a$ by a single symbol $x=(g,s)$, we will use $\level(x)=g$ to denote the {\it level of $x$} and $\real(x)=s$ to denote the {\it residue of $x$}, which lies in $B_g\setminus\{0\}$.   We make the convention that $\real(0)=0$ and $\level(0)$ is undefined (and $\level(\Infty)$ is undefined if $\Infty$ is included). 

In the same way, we define $$A\bigins_{k\le a\le h}B_a=\left\{ (a,b): a\in A, k\le a\le h, b\in (B_a\setminus\{0\})\right\}\cup\{0\}$$ 
which we will also write $\displaystyle A\bigins_{a=k}^hB_a$.  We can similarly define  $\displaystyle  A\bigins_{a\le h}B_a$ and $\displaystyle A\bigins_{k\le a}B_a$.
\end{defn}

\section{Measures.}

In this section we will assume that $\LB$ is some ordered abelian semigroup.   Often $\LB$ will have the least upper bound property.  
\begin{defn}\label{LMeasureDef}  Suppose $\LB$ is an ordered abelian semigroup.  Let $(X,\Sigma)$ be a measurable space with $\sigma$-algebra $\Sigma$.   An {\it $\LB$-measure} $\mu$  assigns an element $\mu(E)$ of $\LB$ to each measurable set $E$ such that the following {\it measure axioms} hold :

(i) $\mu(\emptyset)=0$, 

(ii) If $\{E_k\}_{k\in I}$ is a countable collection of disjoint measurable sets in $X$, then 
$$\mu\left(\bigcup_{k\in I} E_k\right )=\sum_{k\in I} \mu(E_k)\in \LB.$$

\end{defn}

Typically, $\LB$ will have the lub property, so that the existence of the sum in (ii) is immediate.   But, for example, it is possible to define $\PB$-measures although $\PB$ does not have the lub property, see Section \ref{Probability}, an appendix.
For measure theories involving measures with values in $\reals$, one allows infinite measures although $\infty\notin \reals$.  Using our ordered algebraic structures instead of $\reals$, we are often forced to include infinities (at different levels) so that in condition (ii) of the above definition $\sum_{k\in I} \mu(E_k)$ lies in $\LB$.   If $\LB$ has the least upper bound property and has a greatest element, then we can be sure that  $\sum_{k\in I} \mu(E_k)\in \LB$ always lies in $\LB$.  However, even if $\LB$ does not have a greatest element or the lub property, it is sometimes possible to construct $\mu$ so that $\sum_{k\in I} \mu(E_k)$ is always in $\LB$.

Let us consider now the special case of an $\SB$-measure or $\bar\SB$-measures.  Since $\SB$-measures are $\bar\SB$-measures, we may as well work with $\bar\SB$-measures, specifying $\SB$-measures only when necessary.    Recall $\bar\SB=\naturals_0\ens [0,\infty]$.  The definitions and ideas developed for $\bar\SB$-measures apply equally to $\bar\OB$-measures and even more generally.  

\begin{example}  Here is an uninteresting example of a $\bar\SB$-measure, which at least shows that $\bar \SB$-measures exist.   Let $X=[0,\infty)\subset \reals$ be the measurable space with the usual Lebesgue measurable sets.  Let $X_i=[i-1,i)$, $i\in \naturals$.   Let $\nu$ be the Lebesgue measure on $X$.   We define a $\bar\SB$-measure $\mu$ as follows.   If $E$ is measurable, and there is a maximum $i$, say $i=n$, such that $\nu(E\cap X_i)>0$, then we define $\mu(E)=(n,\nu(E\cap X_n))$.   If $\nu(E\cap X_i)=0$ for all $i$, we define $\mu(E)=0$.  If $\nu(E\cap X_i)>0$ for infinitely many $i>0$, we define $\mu(E)=\Infty$.   

The measure axioms for $\mu$ are very easy to check  as follows.   Suppose $\{E_k:k\in I\}$ is a countable collection of disjoint measurable sets.   
If $\level(\mu\left(\bigcup_{k\in I} E_k\right ))=n$, then $\nu((\bigcup_{k\in I}E_k)\cap X_n)>0$ and $\nu((\bigcup_{k\in I}E_k)\cap X_i)=0$ for $i>n$.   By the measure axioms for $\nu$, $\nu((\bigcup_{k\in I}E_k)\cap X_n)=\sum_{k\in I} \nu(E_k\cap X_n)$.  This means that there exist values of $k$ such that $ \nu(E_k\cap X_n)>0$, while for all $k$, $\nu(E_k\cap X_i)=0$ for $i>n$.  There may also exist $E_k$ such that $\nu(E_k\cap X_i)=0$ for $i\ge n$, but these will not contribute to the sums in the measure axiom equation:

$$\mu\left(\bigcup_kE_k\right)=\left(n,\nu\left(\bigcup_k\left( E_k\cap X_n\right)\right)\right)= \left(n,\sum_k \nu(E_k\cap X_n)\right)$$
$$=\sum_k \left(n, \nu(E_k\cap X_n)\right)=    \sum_k(\mu(E_k)). $$

If $\level(\mu\left(\bigcup_{k\in I} E_k\right ))$ is undefined, then $\mu\left(\bigcup_{k\in I} E_k\right ) =0$ or  $\mu\left(\bigcup_{k\in I} E_k\right ) =\Infty$.  In the first case, $\nu(\left(\bigcup_{k\in I} E_k\right )\cap X_i)=0$ for all $i$, which implies that $\nu(E_k\cap X_i)=0$ for all $k$ and $i$.  So $\mu(E_k)=0$ for all $k$, and the measure axiom equation holds again.   In the second case,  $\mu\left(\bigcup_{k\in I} E_k\right ) =\Infty$, which implies $\nu((\bigcup_{k\in I} E_k ) \cap X_i)>0$ for infinitely many $i$, which in turn implies that for infinitely many $i$ there exists a $k=k_i$ such that $\nu(E_{k_i}\cap X_i)>0$.    It follows that the set of levels $\level(\mu(E_{k_i})$ is unbounded, which implies  $\sum_k(\mu(E_k))=\Infty$.
\end{example}

Associated to an $\bar\SB$-measure $\mu$, we will define a collection of ordinary extended-real valued measures, namely a measure $\mu_i$ associated to each level $i$.  

\begin{defn}   Suppose $\mu$ is an $\bar\SB$- measure on $X$.   We define {\it associated $\bar\reals$ measures} $\mu_i$, $i=0,1,2,\ldots$, as follows.  If $E\subset X$ is a measurable set and $\mu(E)\ne0$, $\mu(E)\ne\Infty$,  let $\ell=\level(\mu(E))$.     Then define 

$$\mu_i(E)
=\begin{cases}\real(\mu(E))& \mbox{if } i=\ell \\
                     \infty & \mbox{if } i<\ell \mbox{ or if }\mu(E)=\Infty\in \bar\SB \\
                     0& \mbox{if } i>\ell \mbox{ or if }\mu(E)=0\in \bar\SB.
                                            \end{cases}
$$

We say the $\bar\SB$-measure $\mu$ has {\it total height $h$} if $\level(\mu(X))=h$.  We say $\mu_i$ is {\it trivial} if for every measurable $E$, either $\level(\mu(E))\ne i$, or $\mu(E)=\Infty$, or $\mu(E)=0$.
\end{defn}

For a fixed measurable $E$, the sequence $\mu_i(E)$ is a decreasing sequence, typically first a constant sequence of $\infty$'s, then a positive element, then a constant sequence of $0$'s.   In exceptional cases, we have a constant sequence of 0's, or a constant sequence of $\infty$'s.  

We must show that $\mu_i$ is a measure:

\begin{lemma}   The function $\mu_i$ on measurable sets, defined above in terms of an $\bar\SB$-measure $\mu$, is a positive $\bar\reals$-measure.
\end{lemma}

\begin{proof}  Let $E_k$, $k=1,2,\ldots$ be a sequence of disjoint measurable sets.    Let $\ell=\level(\mu(\cup_kE_k))$ if  $\mu(\cup_kE_k)\ne 0$, $\mu(\cup_kE_k)\ne \Infty$ and let $\ell_k=\level(\mu(E_k))$ if $\mu(E_k)\ne0$, $\mu(E_k)\ne\Infty$.  If $\mu(\cup_kE_k)\ne 0$, $\mu(\cup_kE_k)\ne \Infty$, since $\mu(\cup_k E_k)=\sum_k\mu(E_k)$, we have  $\level(\mu(E_k))=\ell_k\le \ell$ whenever $\mu(E_k)\ne 0,\Infty$, and in fact $\ell=\max\{\ell_k\}$.   Applying the definition of $\mu_i$ to $\cup_kE_k$ we have

$$\mu_i(\cup_kE_k)
=\begin{cases}\real(\mu(\cup_kE_k))& \mbox{if } i=\ell \\
                     \infty & \mbox{if } i<\ell  \mbox{ or if }\mu(\cup_kE_k)=\Infty \in \bar\SB\\
                     0& \mbox{if } i>\ell \mbox{ or if }\mu(\cup_kE_k)=0\in \bar\SB
                                            \end{cases}
$$
We want to show that $\mu_i(\cup_kE_k)=\sum_k\mu_i(E_k)$.  

We first consider the case $i=\ell$.  In this case $\mu_i(\cup_kE_k)=\real(\mu(\cup_kE_k))=\real(\sum_k\mu(E_k))$.  Since $\ell=\max\{\ell_k\}$, $\real(\sum_k\mu(E_k))=\sum_k\mu_i(E_k)$, and we are done.  If $i<\ell$, $i<\ell_k$ for some $k$, so $\mu_i(E_k)=\infty$ for some $k$, so $\sum_k\mu_i(E_k)=\infty=\mu_i(\cup_kE_k)$.   If $i>\ell$, then $i>\ell_k$ for all $k$, so $\sum_k\mu_i(E_k)=0=\mu_i(\cup_kE_k)$.

It remains to consider the possibilities that $\mu(\cup_kE_k)=0$ or $\mu(\cup_kE_k)=\Infty$.   In the former case all $\mu(E_k)=0$ since $\mu(\cup_k E_k)=\sum_k\mu(E_k)$.   It follows that for any $i$ and all $k$, $\mu_i(E_k)=0$ and $\mu_i(\cup_kE_k)=0$, so $0=\mu_i(\cup_kE_k)=\sum_k\mu_i(E_k)$.  In the latter case, namely the case that $\mu(\cup_kE_k)=\Infty$, either the sequence $\level(\mu(E_k))$ is unbounded or some $\mu(E_k)=\Infty$, again because $\mu(\cup_k E_k)=\sum_k\mu(E_k)$.  In either sub-case, for some $k$, $\mu_i(E_k)=\infty$.  So $\mu_i(\cup_kE_k)=\infty=\sum_k\mu_i(E_k)$.
\end{proof}

The measure $\mu$ can be recovered from the sequence $\{\mu_i\}$.  Namely, to find $\mu(E)$ given all $\mu_i(E)$, if all $\mu_i(E)=0$, then $\mu(E)=0$.  If all $\mu_i(E)=\infty$, then $\mu(E)=\Infty$.  Otherwise let $\ell$ be the greatest $i$ such that $\mu_i(E)>0$, then $\mu(E)=(\ell,\mu_\ell(E))\in \SB$.   With the correct interpretation of special cases, this amounts to saying that $\mu(E)=\sum_i(i,\mu_i(E))$.   We describe the reconstruction of the measure $\mu$ from the associated measures $\mu_i$ in greater detail below in Lemma \ref{Reconstruction}, in a more general setting.

 If one of the measures $\mu_i$ is trivial in the sense that there is no $E\in \Sigma$ such that $\level(\mu(E))=i$, and if there are non-trivial measures $\mu_j$ for some $j>i$ and for some $j<i$ then in some sense the finite height measure is equivalent to another one, in which there are no trivial $\mu_i$ between non-trivial $\mu_j$'s.   In this case we could decrease by 1 the level of every $\mu_j$ for every finite $j>i$ to decrease the number of trivial levels between non-trivial levels.   
 
 \begin{defn}\label{LevelAdjustment}  Suppose $\mu$ is an $\bar\SB$-measure on $X$.  Suppose for some $i$, $\mu_i$  is trivial in the sense that there is no $E\in \Sigma$ such that $\level(\mu(E))=i$.  Then $\mu$ is {\it equivalent via level omission} to the measure $\nu$, where $\nu$ is obtained from $\mu$ by shifting some levels:  $ \nu_j=\mu_j$ for $j<i$;  $ \nu_j=\mu_{j+1}$ for $j\ge i$.   If there are no trivial measures $\mu_i$ at levels between the levels of non-trivial $\mu_j$, we say $\mu$ is a {\it proximal} measure.
\end{defn}
 
 Informally, one could try to interpret $\bar\SB$-measures as follows.   For simplicity, suppose $\mu$ is an $\bar\SB$-measure of total height $h$ , then a set $E$  with $\level(\mu(E))=h$ might be visible to the naked eye.  To see a set with $\level(\mu(E))=h-1$ one might need a microscope.  For lower levels one would need stronger and stronger microscopes.   However, each stronger microscope would not just magnify by some finite factor, but by an infinite factor.

We now want to develop the same ideas introduced above for $\bar\SB$-measures in a much more general setting, using a semi-algebraic structure $\KB$ obtained by insertion or extended insertion.   This will make it possible to understand measures with values in $\bar \OB_r$, for example, using induction.
 
 \begin{defn} \label{AssociatedMeasure} Suppose $\KB=\integers \ens \LB$ ($\KB=\naturals_0 \ens \LB$), where $\LB$ has a greatest element $\infty$, and suppose $\mu$ is a $\KB$-measure on a measurable space $(X,\Sigma)$.  We define {\it associated $\LB$-measures} as follows.   If $E\subset X$ is a measurable set and $\mu(E)\ne0$, $\mu(E)\ne \Infty$, let $\ell =\level(\mu(E))$.     Then define   $$\mu_i(E)
=\begin{cases}\real(\mu(E))& \mbox{if } i=\ell  \\
                     \infty & \mbox{if } i<\ell \mbox{ or if }\mu(E)=\Infty\in \KB \\
                     0& \mbox{if } i>\ell \mbox{ or if }\mu(E)=0\in \KB.
                                            \end{cases}
$$
where $i\in \integers$ ( $i\in \naturals_0$.)  We say $\mu_i$ is {\it trivial} if for every measurable $E$, $\mu_i(E)=0$ or $\mu_i(E)=\infty$.   \end{defn}

 \begin{remark}   The associated $\LB$-measure $\mu_i$ defined above is trivial if and only if for any measurable set $E$, either $\level(\mu(E))\ne i$ or $\mu(E)=0$ or $\mu(E)=\Infty$.

 \end{remark}

Again, notice that for a fixed measurable $E$ satisfying $\mu(E)\ne\Infty$, $\mu(E)\ne 0$, the sequence $\mu_i(E)$ is decreasing, beginning with a constant sequence of $\infty$'s, followed by a last positive $\mu_i(E)\in \LB$ (which could be $\infty$), followed by a constant sequence of $0$'s.  We can therefore prove that each associated $\mu_i$ is actually a measure as we did in the special case of $\KB=\bar\SB$.  Notice that $\mu(E)=\Infty$ if and only if every $\mu_i(E)=\infty$;  while $\mu(E)=0$ if and only if every $\mu_i(E)=0$.

\begin{lemma}   \label{VerifyMeasure} The function $\mu_i$ on measurable sets, defined above in terms of a $\KB$-measure $\mu$, is an $\LB$-measure.
\end{lemma}

Given a sequence of $\LB$-measures with properties similar to those of a sequence of associated measures, we can construct a $\KB$-measure, $\KB=\integers\ens\LB$.   

\begin{lemma}  \label{Reconstruction} Suppose $X$ is a measurable space, and suppose $\LB$ is an ordered abelian semigroup with a greatest element $\infty$, and suppose $\KB=\integers\ens\LB$ ($\KB=\naturals_0\ens\LB$).  Suppose $\{\nu_i\}$, $i\in \integers$  ($i\in \naturals_0$), is a sequence of $\LB$-measures on $X$.   Suppose also that for any measurable set $E$ the sequence $\nu_i(E)$ satisfies the following condition:     

\begin{addmargin}[20pt]{0pt}
(\star)  If not all $\nu_i(E)=0$ and not all $\nu_i(E)=\infty$,  then there exists $\ell$ such that $\nu_\ell(E)\ne 0$, and $\nu_i(E)=0$ for $i>\ell$, $\nu_i(E)=\infty$ for $i<\ell$.  
\end{addmargin}

\noindent (a) Then there is a $\KB$-measure $\mu$ with associated measures $\nu_i$.   The measure $\mu$ is defined by $\mu(E)=(\ell,\nu_\ell(E))$ if not all $\nu_i(E)=0$ and  not all $\nu_i(E)=\infty$.  In case all $\nu_i(E)=0$, we define $\mu(E)=0$.  In case all $\nu_i(E)=\infty$, we define $\mu(E)=\Infty$.

\noindent (b) $\mu(E)=\sum_i (i,\nu_i(E))$, where $=\sum_i (i,0)$ is interpreted as $0\in \KB$ and $=\sum_i (i,\infty)$ is interpreted as $\Infty\in \KB$.
\end{lemma}

\begin{proof}  We must show that $\mu$ is a measure with associated measures $\nu_i$.  

First we show it is a measure. If $E$ is the empty set, $\nu_j(E)=0$ for all $j$, so $\mu(E)=0$.  To verify countable additivity, suppose $\{E_k\}$, $k=1,2,3\ldots$, is a sequence of disjoint measurable sets.   If not all $\nu_i(\cup_kE_k)$ are $0$ and not all $\nu_i(\cup_kE_k)$ are $\infty$, then $\mu(\cup_kE_k)=(\ell,\nu_\ell(\cup_kE_k))$ where $\ell$ is the largest $i$ with $\nu_i(\cup_kE_k)>0$.   Since $\nu_i$ is a measure, $\nu_i(\cup_kE_k)=\sum_k \nu_i(E_k)$, so $\ell$ is the greatest $i$ such that $\sum_k \nu_i(E_k)>0$.  Hence $\ell$ is the greatest $i$ such that there is a $k$ with $\nu_i(E_k)>0$.   This means that for some $s$, $\nu_\ell(E_s)>0$, but for all $k$ and all $i>\ell$, $\nu_i(E_k)=0$.  In terms of the definition of $\mu$, this means $\level(\mu(E_s))=\ell$ and for all $k$ $\level(\mu(E_k))\le \ell$.   It follows that $\mu(\cup_kE_k)=(\ell,\nu_\ell(\cup_kE_k))=(\ell,\sum_k\nu_\ell(E_k))=\sum_k(\ell,\nu_\ell(E_k))=\sum_k\mu(E_k)$.  The last equality holds since if $\level(\mu(E_k))<\ell$, the summand $\mu(E_k)$ does not contribute to the sum, and $\level(\mu(E_k))\le \ell$.

Now let $\mu_i$ be an associated measure for $\mu$.  We must show $\mu_i=\nu_i$.   For fixed measurable $E$, in the general case $\mu(E)=(\ell,\nu_\ell(E))$.  From the definition of associated measures $\mu_i(E)=\residue(\mu(E))=\nu_i(E)$ if $i=\ell$.  If $i>\ell$, $\nu_i(E)=0$ from the properties of the sequence $\nu_i(E)$, while $\mu_i(E)=0$ because $i>\level(\mu(E))$.   If $i<\ell$, $\nu_i(E)=\infty$ from the properties of the sequence $\nu_i(E)$, while $\mu_i(E)=\infty$ by definition of $\mu_i$.  In the special case where all $\nu_j(E)=0$, we defined $\mu(E)=0$, so $\mu_i(E)=0=\nu_i(E)$.   In the special case where all $\nu_j(E)=\infty$, we defined $\mu(E)=\Infty$ so $\mu_i(E)=\infty=\nu_i(E)$.
\end{proof}

Lemmas \ref{VerifyMeasure} and \ref{Reconstruction} show that the measure theories we have defined, with values in certain ordered algebraic structures, can in some sense, be completely understood in terms of classical measure theory.   However, since, for example, an $\bar\OB$-measure corresponds to a bi-infinite sequence of classical measures which are related to each other in a certain way, we see that an $\bar\OB$-measure can be more subtle. 

  Using a recursive analysis, we shall see that a measure $\mu$ with values in $\bar\OB_r$ corresponds to a sequence of sequences of sequences of...of $\bar\reals$-measures.
 
Definitions we have made before carry through to   $\KB=\integers \ens \LB$ (or $\KB=\naturals_0 \ens \LB$, $\KB=\naturals_0 \ins \LB$, $\KB=\integers \ins \LB)$, and therefore also to $\KB$ obtained by repeated (extended) insertion operations.

\begin{defn}\label{ShiftDef} Suppose $\KB=\integers \ens \LB$ (or $\KB=\naturals_0 \ens \LB$), where $\LB$ has a largest element $\infty$.  We will make definitions with notation for $\KB=\integers \ens \LB$, but they also apply to $\KB=\naturals_0 \ens \LB$.

The  {\it total height} of a $\KB$-measure $\mu$ is {\it infinite} if $\level(\mu(E))$ takes infinitely many values as $E$ varies over measurable sets, otherwise the {\it total height} is $\max\{\level(\mu(E)):E \mbox{ measurable}\}-\min\{\level(\mu(E)):E \mbox{ measurable}\}$.  The {\it set of (non-trivial) levels of $\mu$} is $\levels=\{\level(\mu(E)): E \mbox{ is measurable and }$ $\mu(E)\ne0,\ \mu(E)\ne\Infty\}$.    A {\it trivial level for $\mu$} is any level not in $\levels$.  A {\it gap level for $\mu$} is a level $g$ not in $\levels$ such that there exist $r,s\in \levels$ with $r<g<s$.

Suppose $g$ is a gap level.  Then $\mu$ is {\it equivalent via level omission} to the measure $\nu$, where $\nu$ is obtained from $\mu$ by omitting a level:   if $\mu(E)=0$ or $\mu(E)=\Infty$, we define $\nu(E)=\mu(E)$.   Otherwise, let $\ell=\level(\mu(E))$, and define

$$\nu(E)
=\begin{cases}\mu(E)& \mbox{if } \ell<g\\
			(\ell-1,\real(\mu(E)) & \mbox{ if }\ell> g.\\
			
                                                                \end{cases}$$
In general $\mu$ and $\nu$ are {\it equivalent by level omission} if one is obtained from the other by a (possibly infinite) sequence of level omissions.  If $\mu$ has no gap levels, we say $\mu$ is a {\it proximal} measure, so $\levels$ is a set of consecutive integers.

The measure $\mu$ is equivalent via {\it level shift (by $k$ levels)} to $\nu$ if $\nu$ is defined in terms of $\mu$ as follows:   If $\mu(E)=0$ or $\mu(E)=\Infty$, we define $\nu(E)=\mu(E)$.   Otherwise,  let $\ell=\level(\mu(E))$, and define $\nu(E)=(\ell+k,\real(\mu(E)) $.   If a measure has finitely many non-trivial levels, it is {\it finite height}.   If it is equivalent via level shift and level omission to a measure with non-trivial levels $\levels=\{0,1,2\ldots\}$, it is called {\it infinite height}; if it is equivalent to a measure with non-trivial levels $\levels=\{\ldots,-2,-1,0\}$, then it is {\it infinite depth}; if it is equivalent to a measure with non-trivial levels $\integers$, then it is called {\it bi-infinite}.

The definitions also apply to  $\mu$ a measure with values in $\KB=\integers \ins \LB$ (or $\KB=\naturals_0 \ins \LB$) since we can simply regard such a $\mu$ as a measure with values in the larger  $\KB=\integers \ens \LB$ (or $\KB=\naturals_0 \ens \LB$).
\end{defn}

We can describe the effect of the level omission on associated measures.  If $g$ is a gap  level then the measure $\nu$ obtained from $\mu$ by level omission has associated measures  $ \nu_i=\mu_i$ for $i<g$;  $ \nu_i=\mu_{i+1}$ for $i\ge g$.  The effect of a level shift (by $k$) of $\mu$ yielding $\nu$  gives associated measures $\nu_i(E)=\mu_{i-k}(E)$.  

Note that the number of levels in $\levels$, when finite, is one more than the total height.  The definition is designed to be consistent with definitions for finite depth foliations, laminations, etc.   Using level shifts and level omissions, we can replace any finite height $\KB$-measure ($\KB=\integers \ins \LB$ or $\KB=\naturals_0 \ins \LB$) by a proximal measure with  $\mu_i$ trivial for $i<0$ and the $\mu_0$ non-trivial.   Or viewing the measure as being finite depth, we could arrange that $\mu_i$ is trivial for $i>0$, $\mu_0$ is non-trivial, with finitely many negative levels $i$ where $\mu_i$ is non-trivial.   If there are infinitely many levels $i$, we can similarly use level omissions and shifts to arrange that $\levels=\naturals_0$, or $\levels=\integers\setminus \naturals$, or $\levels=\integers$.

\hop\hop

\noindent{\it Borel measures}
\hop

Given a Hausdorff topological space $X$, a Borel measure assigns a measure to each set in a $\sigma$-algebra generated by open sets.   The values of a positive Borel measure lie in $[0,\infty]\subset \bar\reals$. 

\begin{defn}\label{BorelMeasureDef}   Suppose $\LB$ is any ordered abelian semigroup with the lub property.   A {\it Borel $\LB$-measure} $\nu$ on a Hausdorff topological space $X$ assigns an element $\nu(E)$ of $\LB$ to each Borel set $E$ such that  $\nu$ satisfies the measure axioms (Definition \ref{LMeasureDef}).
\end{defn}

Again, typically we use an $\LB$ with the lub property, but this is not necessary.

There is a simple way of constructing $\bar\OB$-measures and  $\bar\OB_r$-measures which is especially useful for Borel measures.  

\begin{lemma}  Suppose $X$ is a measurable space.   Suppose $X_i$, $i\in \integers$ ($i\in \naturals_0$) is a sequence of disjoint measurable subsets and suppose $\rho_i$ is an $\LB$-measure on $X_i$.   Then\\ $\mu(E)=\sum_i(i,\rho_i(E\cap X_i))\in \KB=\integers\ens\LB$ defines a $\KB$-measure on $X$ if $(i,0)$ is interpreted as $0\in\KB$.
\end{lemma}

\begin{proof}  We verify the measure axioms.   The fact that $\mu(\emptyset)=0$ follows immediately.   If $E_k$, $k=1,2,\ldots$ is a sequence of disjoint measurable sets, 
$$\mu(\cup_kE_k)=\sum_i\left(i,\rho_i(X_i\cap[\cup_kE_k])\right)=\sum_i\left(i,\sum_k\rho_i(E_k\cap X_i)\right)=$$

$$=\sum_i\sum_k(i,\rho_i(E_k\cap X_i))=\sum_k(\sum_i(i,\rho_i(E_k\cap X_i))=\sum_k\mu(E_k).$$
\end{proof}

\begin{defn}  Suppose $\mu$ is a $\KB$-measure on a measurable space $X$, where $\KB=\integers\ens\LB$  ( $\KB=\naturals_0 \ens\LB$).  Then $\mu$ is {\it decomposable} if there exist disjoint measurable sets $X_i$ and $\LB$-measures $\rho_i$ on $X_i$ such that $\mu(E)=\sum_i(i,\rho_i(E\cap X_i))$ where  $(i,0)$ is interpreted as $0\in\KB$.
\end{defn}

\begin{remark}  It should be clear that  $\rho_i=\mu_i|_{X_i}$, where $\mu_i$ is the $i$-th associated measure for $\mu$.  Also, this definition excludes the possibility that there exist measurable $E$ with $\mu(E)=\Infty$ and $\rho_i(E)=0$ for all $i$.   
\end{remark}

We should give an example of a non-decomposable $\bar\OB$-measure.

\begin{example} Regarding $\bar\OB$ as $\integers\ens[0,\infty]$ we define a $\bar\OB$-measure on $\reals$ by $\mu(E)=(0,\nu(E))$ for uncountable Lebesgue measurable sets $E$, where $\nu(E)$ is Lebesgue measure.   If $E$ is countable $\mu(E)=(-1,|E|)$ where $|E|$ is the number of elements in $E$, or $\infty$ in case $E$ is countably infinite.  
\end{example}

The motivation for the definition of ``decomposable," as well as other properties of measures we will describe below, all come from certain measures which arise as transverse measures for codimension-1 laminations.  Transverse measures are defined in terms of measures on transversals, which of course are homeomorphic to intervals in $\reals$.  The following is a motivating example.

\begin{example} \label{CantorExample} Let $X=[0,1]$ and let $X_{i}$ be subsets, $i=-d,-d+1\ldots,-2,-1,0$ as follows.   $X_0$ is the usual middle thirds Cantor set in the unit interval.   $X_{-1}$ is a countable union of middle thirds Cantor sets, with one such Cantor set inserted in the interior of each middle third removed to construct $X_0$.  Inductively, assuming we have constructed $X_{i}$, let $X_{i-1}$ be a countable union of Cantor sets, with one such Cantor set inserted in the interior of every middle third removed to construct $X_{i}$.   Clearly $Y_i=\cup_{j\ge i}X_j$ is a closed set.   Each Cantor set in each $X_{i}$ will have a measure $\rho_{i}$ coming from the construction of the Cantor set in the unit interval.  Thus for large $|i|$ the measure on $X_{i}$ will be large compared to the Lebesgue measure in the unit interval.  We define an $\bar\OB$-measure $\mu$ on $X$ by $\mu(E)=\sum_i(i,\rho_i(E\cap X_i))$.  
Then $\rho_i$ is a locally finite (Radon),  full support measure on $X_i$, and $\mu$ has a kind of local finiteness property, namely it locally has values in $\PB$.  Furthermore, from our definition of $\mu$ it is immediate that $\mu$ is decomposable as a measure with values in $\bar\OB=\integers\ens[0,\infty]$.

Finally, we verify that the measure $\mu$ has an inner regularity property.   Namely, for any measurable $E$, $\mu(E)=\sup\{\mu(K):K\subset E \mbox{ is compact}\}$.   To see this, first observe that $\rho_i$ is locally finite, hence Radon and inner regular.  So $\rho_i(E\cap X_i)=\sup\{\rho_i(K):K\subset E\cap X_i \mbox{ compact}\}$.  On the other hand, in the definition  $\mu(E)=\sum_i(i,\rho_i(E\cap X_i))$, if $\mu(E)\ne 0$, there must be a maximum $i$ such that $\rho_i(E\cap X_i)$ is non-zero, which we call $\ell$.   Then $\mu(E)=(\ell,\rho_\ell(E\cap X_\ell))$ and we know $\rho_\ell(E\cap X_\ell)=\sup\{\rho_\ell(K):K\subset E\cap X_\ell \mbox{ compact}\}$.   Hence also $\mu(E)=(\ell,\sup\{\rho_\ell(K):K\subset E\cap X_\ell \mbox{ compact}\})=\sup\{\mu(K):K\subset E\cap X_\ell \mbox{ compact}\}\le\sup\{\mu(K):K\subset E\mbox{ is compact}\}$.  On the other hand, it is obvious that $\sup\{\mu(K):K\subset E\mbox{ is compact}\}\le\mu(E)$.   It remains to consider the case that $\mu(E)=0$, but in that case it is obvious that  $\mu(E)=\sup\{\mu(K):K\subset E \mbox{ is compact}\}$.  \end{example}

\begin{defn}  A {\it Borel decomposable $\bar\OB$-measure} $\mu$ is a decomposable Borel measure on a topological space $X$ with the following properties:
\begin{tightenum}
\item $X_i$ is a sequence (finite, infinite, or bi-infinite) of disjoint subspaces of $X$.
\item $\rho_i$ is a $\bar\reals$-measure of full support on $X_i$. 
\item $\mu$ is defined by $\mu(E)=\sum_i(i,\rho_i(E\cap X_i))$.

\item $Y_i=\cup_{j\ge i} X_j$ is a closed subspace.
\end{tightenum}

We will say a Borel decomposable $\bar\OB$-measure $\mu$ on a locally compact Hausdorff space is a {\it $(\PB,\bar\OB)$-measure} if $\rho_i$ is locally finite and $X_i$ is $\sigma$-compact.

The measure is {\it inner regular} if for any measurable $E$, $$\mu(E)=\sup\{\mu(K): K\subset E\cap X_i \mbox{ for } i\in \integers \mbox{ and } K \mbox{ compact}\}$$
$$=\sup\{\mu(K): K\subset E \mbox{ is compact}\}.$$
\end{defn}

The following lemma says that a $(\PB,\bar\OB)$-measure is determined by its values in $\PB$.

\begin{lemma} \label{POLemma} Suppose $\mu$ is a $(\PB,\bar\OB)$-measure.  In other words, suppose $\mu$ is a Borel decomposable $\bar\OB$-measure on a locally compact, Hausdorff topological space $X$, which decomposes into locally finite positive $\bar\reals$-measures $\rho_i$ on $\sigma$-compact subspaces $X_i$.  Then:

\noindent (a) $\mu$ is inner regular, and is determined by its values in $\PB$.

\noindent (b) The subsets $X_i$ and the measures $\rho_i$ are uniquely determined by $\mu$.
\end{lemma}

\begin{proof}  We begin by showing that $X_i$ is locally compact and Hausdorff.  This follows from standard point set topology as follows.   Since $X$ is locally compact and Hausdorff, the closed subspace $Y_{i}$ is locally compact and Hausdorff.   Open subspaces of locally compact and Hausdorff spaces are also locally compact.   Therefore, since $X_i$ is open in $Y_{i}$, it is locally compact and Hausdorff.   Since $X_i$ is $\sigma$-compact, we conclude by the Riesz Representation Theorem that the locally finite $\bar\reals$-measure $\rho_i$ is inner regular, a  Radon measure.  

Next we show that $\mu(K)\in \PB$ for $K\subset X_i$ compact.  If $K\subset X_i$ is compact, then because $\rho_i$ is locally finite, $\rho_i(K)$ is finite.  Then $\mu(K)=\sum_j(j,\rho_j(K\cap X_j))=(i,\rho_i(K))\in \PB$.

To prove (a), consider an arbitrary measurable $E$. We know $\mu(E)=\sum_i(i,\rho_i(E\cap X_i))$.   The subspace $X_i$ is locally compact and Hausdorff, so since $\rho_i$ is   inner regular, $\rho_i(E\cap X_i)=\sup\{\rho_i(K):K\subset E\cap X_i \mbox{ compact}\}$.  Suppose first that $\mu(E)\ne 0$ and that there is a maximum $i$ such that $\rho_i(E\cap X_i)$ is non-zero, say $i=\ell$.   Then $\mu(E)=(\ell,\rho_\ell(E\cap X_\ell))$ and we know $\rho_\ell(E\cap X_\ell)=\sup\{\rho_\ell(K):K\subset E\cap X_\ell \mbox{ compact}\}$.   Hence also $\mu(E)=(\ell,\sup\{\rho_\ell(K):K\subset E\cap X_\ell \mbox{ compact}\})=\sup\{\mu(K):K\subset E\cap X_\ell \mbox{ compact}\}\le\sup\{\mu(K):K\subset E\mbox{ is compact}\}$.  On the other hand, it is obvious that $\sup\{\mu(K):K\subset E\mbox{ is compact}\}\le\mu(E)$.   
It remains to consider two cases: the case that $\mu(E)=0$ and the case that the set of levels $i$ such that $\rho_i(E\cap X_i)\ne 0$ is not bounded above.  If $\mu(E)=0$, it is obvious that  $\mu(E)=\sup\{\mu(K):K\subset E \mbox{ is compact}\}$.  If there is an infinite increasing sequence $\{i_j\}$ such that $\rho_{i_j}(E\cap X_{i_j})\ne 0$, then $\mu(E)=\sum_i(i,\rho_i(E\cap X_i))=\Infty$, and there exist compact $K_{i_j}\subset E\cap X_{i_j}$ with $\rho_i(K_{i_j})>0$, which implies $\sup\{\mu(K):K\subset E \mbox{ is compact}\}=\Infty$.  This completes the proof of inner regularity.  The measure is then determined by its values on compact sets, hence by its values in $\PB$.

 For statement (b) we claim that $Y_i$ is determined by $\mu$ as the set of points $x\in X$ such that every neighborhood $V$ of $x$ satisfies $\level(\mu(V))\ge i$ or $\mu(V)=\Infty.$  Thus $Y_i$ is a kind of support of the ``part of $\mu$ at level $\ge i$," and is closed.  To justify the claim, observe that if $x\in Y_i$ then $x\in X_j$ for some $j\ge i$.   Therefore for every neighborhood $U$ of $x$ in $X_j$,  $\rho_j(U)>0$ because $\rho_j$ has full support in $X_j$.   If $V$ is a neighborhood of $x$ in $X$, then $U=V\cap X_j$ satisfies $\rho_j(U)>0$  which implies $\level(\mu(V))\ge i$.  Conversely, if $x$ is a point with the property that every neighborhood $V$ of $x$ satisfies $\level(\mu(V))\ge i$, then $x\notin X\setminus Y_i$ because $X\setminus Y_i$ is an open neighborhood of $x$.  Since the $Y_i$'s are determined by $\mu$, so are the $X_i$'s.   Finally $\rho_i$ is determined for $E\subset X_i$ as $\rho_i(E)=\residue(\mu(E))$.  
 \end{proof}

An analogue of Lemma \ref{POLemma} for $\bar\OB_r$ would be desirable, so we develop the necessary ideas.

\begin{defn}  Suppose $\KB=\integers\ens \LB$, where $\LB$ is an ordered abelian semigroup and suppose $X$ is a locally compact Hausdorff space.   A {\it Borel decomposable $\KB$-measure} $\mu$ on $X$ is a Borel measure which decomposes into $\LB$-measures $\rho_i$ as follows:
\begin{tightenum}
\item $X_i$ is a sequence (finite, infinite, or bi-infinite) of disjoint subspaces of $X$.
\item $\rho_i$ is an $\LB$-measure of full support on $X_i$.  
\item $\mu$ is defined by $\mu(E)=\sum_i(i,\rho_i(E\cap X_i))$.
\item $Y_i=\cup_{j\ge i} X_j$ is a closed subspace.
\end{tightenum}

A Borel $\bar\OB_r$-measure $\mu$ on a space $X$ is {\it recursively Borel decomposable} if $\mu$ is Borel decomposable into $\bar\OB_{r-1}$-measures, each of the  $\bar\OB_{r-1}$-measures is Borel decomposable into  $\bar\OB_{r-2}$-measures, and so on, until we end with locally finite $\bar\reals$-measures.    The end result is a collection of Radon full support positive $\bar\reals$-measures $\rho_{i_1i_2\cdots i_r}$ on disjoint subspaces $X_{i_1i_2\cdots  i_r}$.

We will say a Borel decomposable $\bar\OB_r$-measure on a locally compact Hausdorff space is a {\it $(\PB_r,\bar\OB_r)$-measure} if it decomposes into locally finite $\bar\reals$-measures $\rho_{i_1i_2\cdots i_r}$ on $\sigma$-compact subspaces $X_{i_1i_2\cdots  i_r}$.

The measure is {\it inner regular} if for any measurable $E$, $$\mu(E)=\sup\{\mu(K): K\subset E\cap X_{i_1i_2\cdots  i_r} \mbox{ for }{(i_1,i_2,\cdots  i_r)\in \integers^r} \mbox{ and } K \mbox{ compact}\}$$
$$=\sup\{\mu(K): K\subset E \mbox{ is compact}\}.$$
\end{defn}

In the following the reader should recall the notation for elements of $\bar\OB_r$ given in Definition \ref{ONotation}, as well as the definition of the level functin $\level$ on $\bar\OB_r$.

\begin{lemma} \label{PORLemma} Suppose $X$ is a locally compact, Hausdorff space and suppose $\mu$ is a  $(\PB_r,\bar\OB_r)$-measure.  In other words, suppose $\mu$ is a recursively Borel decomposable $\bar\OB_r$-measure on a locally compact, Hausdorff topological space $X$, yielding locally finite $\bar\reals$-measures $\rho_{i_1i_2\cdots i_r}$ on disjoint $\sigma$-compact subspaces $X_{i_1i_2\cdots  i_r}$.  Then:

\noindent (a) $\mu$ is inner regular, and is determined by its values in $\PB_r$.

\noindent (b)  The subsets  $X_{i_1i_2\cdots  i_r}$ and the measures $\rho_{i_1i_2\cdots i_r}$ are uniquely determined by $\mu$.

\end{lemma}

\begin{proof} The proof is an inductive version of the proof of Lemma \ref{POLemma}.

 We begin by showing that $X_{i_1i_2\cdots  i_r}$ is locally compact and Hausdorff.  The first Borel decomposition of $\mu$ yields disjoint sets $X_i$ with $\OB_{r-1}$-measures  $\rho_i$.   Since $Y_{i}$ is a closed subspace, it is locally compact, and since $X_i$ is open in $Y_{i}$ it is also locally compact.   Now for each $i$ the measure $\rho_i$ is Borel decomposable, yielding a Borel decomposition $X_i=\cup_jX_{ij}$ with an $\OB_{r-1}$ measure $\rho_{ij}$ for each $(i,j)$.   The $X_{ij}$'s are locally compact for the same reason as before; they can be obtained by passing to closed and open subspaces.  Continuing the proof inductively, we conclude that each  $X_{i_1i_2\cdots  i_r}$ is locally compact and by the definition of a recursively Borel decomposable $\bar\OB_r$, the $\bar\reals$-measure $\rho_{i_1i_2\cdots i_r}$ is locally finite and therefore Radon and inner regular.  

Now we show $\mu(K)\in \PB_r$ if $K\subset X_{i_1i_2\cdots  i_r}$ is compact. If $K\subset  X_{i_1i_2\cdots  i_r}$ is compact, then because $\rho_{i_1i_2\cdots i_r}$ is locally finite, $\rho_{i_1i_2\cdots  i_r}(K)$ is finite.  Then 
$$\mu(K)=\sum_{(j_1,j_2\ldots, j_r)}((j_1,j_2\ldots, j_r),\rho_{j_1j_2\ldots j_r}(K\cap X_{j_1j_2\ldots j_r}))=((i_1,i_2\ldots, i_r),\rho_{i_1i_2\cdots  i_r}(K))\in \PB_r.$$

To prove (a), consider an arbitrary measurable $E$. We know 

\begin{equation}\label{eqn:mudef}
\mu(E)=\sum_{(j_1,j_2\ldots, j_r)}\left((j_1,j_2\ldots, j_r),\rho_{j_1j_2\ldots j_r}(E\cap X_{j_1j_2\ldots j_r})\right)
\end{equation}

\noindent  Since $\rho_{j_1j_2\ldots j_r}$ is   inner regular, 
$$\rho_{j_1j_2\ldots j_r}(E\cap X_{j_1j_2\ldots j_r})=\sup\{\rho_{j_1j_2\ldots j_r}(K):K\subset E\cap X_{j_1j_2\ldots j_r} \mbox{ compact}\},$$  
\noindent where the supremum could be infinite.

Suppose first that $\mu(E)\ne 0$ and that there is a maximum $(j_1,j_2\ldots, j_r)$ (with the lexicographical ordering) such that $\rho_{j_1j_2\ldots j_r}(E\cap X_{j_1j_2\ldots j_r})$ is non-zero, say $(j_1,j_2\ldots, j_r)=(i_1,i_2\ldots, i_r)$.   Then $\mu(E)=((i_1,i_2\ldots, i_r),\rho_{i_1i_2\cdots  i_r}(E\cap X_{i_1i_2\cdots  i_r}))$ and we know 
$$\rho_{i_1i_2\cdots  i_r}(E\cap X_{i_1i_2\cdots  i_r})=\sup\{\rho_{i_1i_2\cdots  i_r}(K):K\subset E\cap X_{i_1i_2\cdots  i_r} \mbox{ compact}\}.$$

\noindent    Hence also
 $$\mu(E)=((i_1,i_2\ldots, i_r),\sup\{\rho_{i_1i_2\cdots  i_r}(K):K\subset E\cap  X_{i_1i_2\cdots  i_r} \mbox{ compact}\})$$
$$=\sup\{\mu(K):K\subset E\cap X_{i_1i_2\cdots  i_r} \mbox{ compact}\}\le\sup\{\mu(K):K\subset E\mbox{ is compact}\}.$$
 \noindent On the other hand, it is obvious that $\sup\{\mu(K):K\subset E\mbox{ is compact}\}\le\mu(E)$.   
It remains to consider two cases: the case that $\mu(E)=0$ and the case that there is no maximum level $(j_1,j_2\ldots, j_r)$ such that $\rho_{j_1j_2\ldots,j_r}(E\cap X_{j_1j_2\ldots,j_r})\ne 0$.  If $\mu(E)=0$, it is obvious that  $\mu(E)=\sup\{\mu(K):K\subset E \mbox{ is compact}\}$.  In the other remaining case, there are infinitely many non-zero terms in Equation \ref{eqn:mudef}, and the sum must equal one of the infinities in $\bar\OB_r$.    In that case there also exist compact sets  $K_{j_1j_2\ldots j_r}\subset E\cap  X_{j_1j_2\ldots j_r}$ at the same levels, with $\rho_{j_1j_2\ldots j_r}(K_{j_1j_2\ldots j_r})>0$.  This implies $\sup\{\mu(K):K\subset E \mbox{ is compact}\}=\mu(E)$, the same infinity.  This completes the proof of inner regularity.  The measure is then determined by its values on compact sets in subspaces $X_{i_1i_2\cdots  i_r}$, hence by its values in $\PB_r$.

We prove statement (b) by induction, using essentially the same proof as in Lemma \ref{POLemma}.  For $r=1$, $\bar\OB_r=\bar\OB$, we proved the statement in the previous lemma.   Suppose we have proved the statement for $\bar\OB_{r-1}$.   Now suppose $\mu$ is a recursively Borel decomposable $\bar\OB_r$- measure on $X$.   So $\mu(E)=\sum_i(i,\rho_i(E\cap X_i))$, where $\rho_i$ is a $\bar\OB_{r-1}$ measures on $X_i$ for each $i$.   Again, $Y_i=\cup_{j\ge i} X_j$ and we claim that $Y_i$ is determined by $\mu$ as the set of points $x\in X$ such that every neighborhood $V$ of $x$ satisfies $\level(\mu(V))\ge i$.  To justify the claim, observe that if $x\in Y_i$ then $x\in X_j$ for some $j\ge i$.   Therefore for every neighborhood $U$ of $x$ in $X_j$,  $\rho_j(U)>0$ because $\rho_j$ has full support in $X_j$.   If $V$ is a neighborhood of $x$ in $X$, then $U=V\cap X_j$ satisfies $\rho_j(U)>0$  which implies $\level(\mu(V))\ge i$.  Conversely, if $x$ is a point with the property that every neighborhood $V$ of $x$ satisfies $\level(\mu(V))\ge i$, then $x\notin X\setminus Y_i$ since $X\setminus Y_i$ is a neighborhood of $x$, and clearly $\level(\mu(X\setminus Y_i))<i$.  Since the $Y_i$'s are determined by $\mu$, so are the $X_i$'s.   Finally $\rho_i$ is determined for $E\subset X_i$ by the induction hypothesis and $\rho_i(E)=\residue(\mu(E))$.  
\end{proof}

\section{Laminations with transverse $\LB$-measures, \\ $\LB$ metric spaces, and $\LB$ trees.}

Suppose $L$ is an essential codimension-1 lamination in a compact surface $S$ with $\chi(S)<0$.   Up to minor modifications, essential laminations in a surface with $\chi(S)<0$ can be realized as geodesic laminations for any chosen hyperbolic structure on $S$.  We will usually assume that we have chosen a hyperbolic structure.
The trees dual to lifts of arbitrary essential laminations  are more general trees called ``order trees."  There is a definition in
 \cite{DGUO:EssentialLaminations}, but we give a different definition here.  I am not sure who first defined these; I first heard about them from Peter Shalen.  In this section, we consider laminations with transverse measures in ordered algebraic structures, and investigate the corresponding additional structure on dual order trees.   Many of the ideas extend to codimension-1 laminations in higher dimensional manifolds.
 
 \begin{defn}  An {\it order tree} is a set $\T$ together with a subset $[x,y]$, called a {\it segment}, associated to each pair of elements $x,y$.   Each segment $[x,y]$ has a linear order such that $x$ is the least element in $[x,y]$ and $y$ is the greatest element.  We allow {\it trivial segments} $[x,x]$.  The set of segments should satisfy the following axioms:
 
 \begin{tightenum}
 \item The segment $[y,x]$ is the segment $[x,y]$ with the opposite order.
\item The intersection of segments $[x,y]$ and $[x,z]$ is a segment $[x,w]$, with $[x,w]\subset [x,y]$ and $[x,w]\subset [x,z]$.  
 
\item  If two segments intersect at a single point, $[x,y]\cap[y,z]=\{y\}$ then the union is a segment  $[x,z]$.
 \end{tightenum}

 \end{defn}

Clearly $\reals$-trees and $\bar\reals$-trees are also order trees.  Using (ii) amd (iii) one can show that the intersection of two segments is a segment.

Suppose now that $\LB$ is any ordered abelian semigroup.  

\begin{defn} An {\it $\LB$-metric} on a set $X$ is a function $d:X\times X\to \LB$ satisfying the usual axioms for a metric.  An {\it $\LB$-metric space} is the set $X$ together with an $\LB$-metric.

An {\it $\LB$-tree} is an order tree with an $\LB$ metric.
\end{defn}

We will assume that $\LB$ is an ordered abelian semigroup and consider Borel $\LB$-measures on segments of trees.

\begin{defn}  Suppose $\T$ is an order tree and suppose $\nu$ is a Borel $\LB$-measure $\nu$ on the disjoint union of segments of $\T$ with the property that  if $[x,y]$ and $[z,w]$ are segments, and $[x,y]\cap [z,w]=[u,v]$, then for any measurable set $E\subset [u,v]$, $\nu(E)$ is the same no matter which segment ($[x,y]$,$ [z,w]$, or $[u,v]$) we use to evaluate the measure.  (The measure agrees on intersections of segments.)   We say $\nu$ is an {\it $\LB$-measure on $\T$}; $\T$ with the measure  $\nu$ is called an {\it $\LB$-measured tree}.

 The $\LB$ measure on an order tree is {\it non-atomic}  if the measure of a single point in a segment is always $0$.  It has {\it full support} if it has full support on the disjoint union of segments.
 
If the measure $\mu$ is a $(\PB,\bar\OB)$-measure (a $(\PB_r,\bar\OB_r)$-measure) on the disjoint union of segments, we say $(\T,\mu)$ is a $(\PB,\bar\OB)$-measured (a $(\PB_r,\bar\OB_r)$-measured) tree.
\end{defn}

\begin{lemma}  Suppose $\T$ is an $\LB$-measured tree with a non-atomic full support measure $\nu$.  Then $\T$ is an $\LB$-metric space with metric $d(x,y)=\nu([x,y])\in \LB$.
\end{lemma}

\begin{proof}  Because $\nu$ has no atomic measures on points, we conclude $d(x,y)=\nu([x,y])=0$ if and only if $x=y$.  To verify the triangle inequality, observe that if $x,y,z$ are points in the tree, by axiom (iii) for order trees, $[x,y]\cap [x,z]=[x,w]$ for some $w$, so $[y,w]\cup [w,z]=[y,z]$ by axiom (iii).  Hence $d(y,z)=\nu ([y,z])=\nu([y,w])+\nu([w,z])\le \nu([y,x])+\nu([x,z])=d(y,x)+d(x,z)$, because $[y,w]\subset [y,x]$ and $[w,z]\subset [x,z]$.
\end{proof}

\begin{defn}\label{OrderDef} Suppose $L$ is a codimension-1  lamination in a manifold $M$.    We say a {\it homotopy of transversals} is a homotopy from a compact transversal $T_0$ to another transversal $T_1$ through transversals $T_t$, $0\le t\le 1$ such that each endpoint of $T_t$ remains in the same leaf or in the same complementary component of $L$ as $t$ varies.  The homotopy of transversals gives an identification of $T_0\cap L$ with $T_1\cap L$, hence a measure $\mu_0$ on $T_0$ with support $T_0\cap L$ can be identified with a measure $\mu_1$ with support $T_1\cap L$, and we say $\mu_1$ is the {\it invariant image} of  $\mu_0$ under the homotopy of transversals.

A {\it transverse $\LB$-measure for $L$} is an assignment of a value $\mu(R)\in \LB$ to every Borel subset $R$ of a closed transversal $T$ of the lamination which has support $L\cap T$.  In particular, $\mu$ assigns a measure $\mu(T)$ to every compact transversal $T$.  The assignment must be {\it invariant}:  If $T_t$ is a homotopy of transversals, then the measure $\mu$ on $T_t$ is the invariant image of $\mu$ on $T_0$.
 
 In case $L$ is an essential lamination in a closed surface $S$ satisfying $\chi(S)<0$, the {\it order tree dual to the lift $\tilde L$ of $L$ to the universal cover $\tilde S$ of $S$} is the set of closures of complementary regions of $\tilde L$ union non-boundary leaves.  A {\it segment} $[x,y]$ is the set elements of $\T$ intersected by a closed oriented efficient transversal $T$ for $\tilde L$ with order coming from the order on the transversal.  \end{defn}

\begin{proof}[Proof of Proposition \ref{DualMeasureProp}] Suppose $x,y$ are points in $\T$, representing leaves or complementary components $X$ and $Y$.  A geodesic $\gamma$ from a point in $X$ to a point in $Y$ gives an efficient transversal, hence a segment $[x,y]$ in $\T$.  The uniqueness of this segment is also easy to verify:  Suppose $\beta$ is another efficient transversal from $X$ to $Y$.  Choose a geodesic segment $\omega$ in $Y$ joining the final point of $\gamma$ in $ Y$ to the final point of $\beta$ in $ Y$, and similarly choose a geodesic segment $\rho$ joining the initial in point $\gamma\cap X$ to the initial point of $\beta$ in $X$.  Since $\gamma\omega\bar\beta\bar\rho$ is null homotopic, we obtain a map $h:R\to \tilde S$ of a square $R$ to $\tilde S$ whose sides are mapped to $\gamma,\beta, \rho,\omega$.  The null-homotopy $h$ can be homotoped such that it is transverse to $\tilde L$, and can then be further homotoped such that the induced lamination on $R$ consists of leaves joining opposite sides of $R$ mapped to $\gamma$ and $\beta$.  

    We verify the order tree axioms:  (i) is true by construction, $[y,x]$ is $[x,y]$ with the opposite order, coming from a transversal with the opposite orientation.  
    
    For (ii), consider oriented geodesic transversal segments $\gamma$ from a point in $X$ to a point in $Y$, and $\beta$ from a point in $X$ to a point in $Z$.  We may choose $\gamma$ and $\beta$ so that they do not intersect.   Choose a geodesic segment $\omega$ joining the final point of $\gamma$ to the final point of $\beta$, and choose a geodesic segment $\rho$ joining the initial point of $\gamma$ in $ X$ to the initial point of $\beta$ in $ X$.   The simple closed $\gamma\omega\bar\beta\bar\rho$ bounds a rectangular disk $R$ in $\tilde S$ and $\tilde L\cap R$ is a lamination in $R$ which is transverse to two opposite sides $\gamma$ and $\beta$ with $\rho\subset\bdry R$.  Consider the set of leaves of $\tilde L\cap R$ joining $\gamma$ to $\beta$.  This includes at least $\rho$ and it must be closed.  So there is a largest element $w$ in $[x,y]$ which is also in $[x,z]$.
    
For property (iii), suppose $[x,y]$ and $[y,z]$ are (non-trivial) segments in $\T$ with $[x,y]\cap [y,z]=\{y\}$.    Representing $[x,y]$ by an oriented geodesic segment $\beta$ and $[yz]$ by an oriented geodesic segment $\gamma$ whose initial point is the final point of $\beta$, we see that $\beta\cup \gamma$ must be an embedded path.   It follows that $\beta\cup \gamma$ can be regarded as a transversal, representing $[x,z]$.  

Now that we know that $\T$ is an order tree, it is easy to show it is an $\LB$-tree.  The transverse $\LB$-measure $\mu$ for  $L$ yields a transverse measure $\tilde \mu$ for $\tilde L$, which is a measure on transversals.  Since transversals are identified with segments of $\T$, we have a measure $\nu$ on the segments.  Invariance of the measure $\tilde \mu$ gives an $\LB$-measure $\nu$ on the disjoint union of segments of $\T$.  If there are no leaves of $L$ with atomic measure, there are no points with atomic measure in (the segments of) $\T$, which shows that $d(x,y)=\nu([x,y])$ defines an $\LB$-metric on $\T$, using also the fact that $\nu$ has full support.  

The action of $\pi_1(S)$ on the universal cover $\tilde S$ yields an action on the $\LB$-tree.
\end{proof}

There is a version of Proposition \ref{DualMeasureProp}, not included here,  which applies to any essential lamination in a 3-manifold with a transverse $\LB$-measure.  Whether we are working with codimension-1 laminations in surfaces or 3-manifolds (or even higher dimensional manifolds), it is useful to use branched manifolds with invariant $\LB$-weight vectors to describe the lamination.   We now define branched manifolds and a number of concepts related to branched manifolds.

\begin{defns} \label{BranchedManifoldDef}  A {\it branched $m$-manifold} is a space with the property that a neighborhood of each point can be described as a quotient $B_\ell$ obtained from a stack of $k$ unit $m$-balls of the form $D_i=\{(x,i)\in\reals^{m+1}:x\in \reals^m, |x|\le 1\}=D\times \{i\}$, $i=1,\ldots,k$, where $D$ is the unit ball.   For each $D_i$, $i=1,\ldots,k-1$, we choose a ``half-ball" $H_i$ in the unit ball $D$ in $\reals^m$ cut from $D_i$ by a smooth properly embedded $m-1$-ball $\gamma_i$ passing through the origin.   Then we identify $H_i\times \{i\} \subset D_i$ with $H_i\times \{i+1\} \subset D_{i+1}$.  The resulting quotient $B_\ell$, with an appropriate smooth structure in which $D_i$'s are smooth, models the branched $m$-manifold locally and is itself a branched $m$-manifold with boundary.  The {\it branch locus} of any branched manifold is the set of points which do not have neighborhoods locally modeled on a ball, i.e. the set of non-manifold points.  Assuming the branch locus of a branched manifold $B$ is well-behaved, such that $B$ has a cell-complex structure in which the branch locus is a subcomplex of dimension $m-1$, the {\it sectors} of $B$   are the the completions in $B$ of the components of the complement of the branch locus.

 Referring to the construction of a local model of $B_\ell$ above, the {\it fibered neighborhood} $V(B_\ell)$ of $B_\ell$ is constructed locally from a stack of $k$ products of the form $D\times [i,i+0.5]=\{(x,h)\in\reals^{m+1}:x\in \reals^m, |x|\le 1, h\in [i,i+0.5] $, $i=1,\ldots,k$.   For $i=1,\ldots,k-1$ we identify $H_i\times \{i+0.5\}\subset D\times [i,i+0.5]$ with $H_i\times \{i+1\}\subset D\times [i+1, i+1+0.5]$.  The products are foliated by intervals, which can be combined to give a foliation by {\it fibers} of the local model $V(B_\ell)$ of the fibered neighborhood.  The {\it cusp manifolds} in the local model are the images under the quotient map of $\gamma_i\times\{i+0.5\}\subset D\times [i,i+0.5]$.  See Figure \ref{OrderNbhd}.

\begin{figure}[H]
\centering
\scalebox{0.8}{\includegraphics{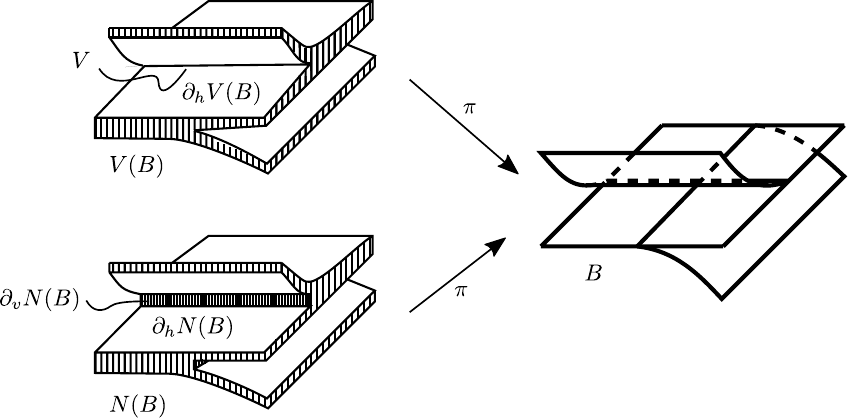}}
\caption{\small Fibered neighborhoods of branched manifolds.}
\label{OrderNbhd}
\end{figure}

 For an arbitrary codimension-1 branched manifold $B\embed M$, where $M$ has dimension $n$ and $B$ has dimension $m=n-1$,  $V(B)$ is a regular neighborhood with a quotient projection map $\pi:V(B)\to B$ such that every point in $B$ has a neighborhood  $B_\ell$ such that $\pi\inverse (B_\ell)=V(B_\ell)$ as described above, with $\pi\inverse(x)$ a fiber of $V(B_\ell)$ for each $x\in B_\ell\subset B$.   A lamination $L$ is {\it carried} by a branched manifold $B$ if it can be embedded in a {\it fibered neighborhood $V(B)$} of $B$ transverse to fibers of $V(B)$.   The {\it branch locus} of $B$ and {\it cusp manifolds} $V$ for $V(B)$ are defined in terms of the local models.   For closed manifolds $M$ and closed branched manifolds, the {\it horizontal boundary} $\bdry_hV(B)$ is the completion of $\bdry V(B)\setminus V$.  For manifolds with boundary  $\bdry_hV(B)$ is the completion of  $\bdry V(B)\setminus (V\cup \bdry M)$.    
    
 There is another kind of fibered neighborhood $N(B)$ of a branched manifold $B$ very similar to $V(B)$, in which the cusp manifolds of $V(B)$ are replaced by $I$-bundles as shown in the figure, which are denoted $\bdry_vN(B)$.  The horizontal boundary $\bdry_hN(B)$ is more clearly identifiable in $N(B)$.   
 
 If $N(B)$ and $N(B')$ are fibered neighborhoods of two different branched manifolds $B$ and $B'$, then $B'$ is a {\it splitting} of $B$ if $N(B)=N(B')\cup J$, where $J$ is an $I$-bundle over an $m$-manifold $F$ (usually with boundary), all of $\bdry_h J$ is attached to $\bdry_h N(B')$, and some of the remainder of $\bdry J$ is attached to $N(B')$ as follows:   If $p:J\to F$ is the projection, $\bdry_vJ$ denotes $q\inverse (\bdry F)$, $\delta$ is a submanifold of dimension $m-1=n-2$ of $\bdry F$, and $\beta$ is the complementary submanifold of $\bdry F$, then $q\inverse(\delta)$ (part of $\bdry_v J$) is attached to $\bdry_v N(B')$ (identifying fiber to fiber) and $q\inverse(\beta)$ is contained in $\bdry_vN(B)$.  Each fiber of $N(B)$ intersects $J$ in fibers of $J$.   If $B'$ is a splitting of $B$ there is a {\it projection map}
$p:B'\to B$ defined in terms of fibers.   If $x'\in B'$, and the fiber $(\pi')\inverse(x')\subset \pi\inverse(x)$, then $p(x')=x$.
 If $B'$ is a splitting of $B$, and $w'$ is an invariant weight vector on $B'$ with weights in some ordered abelian semigroup $\LB$, then $w'$ determines an invariant weight vector $w$ for $B$, where the weight on a sector of $B$ containing $x$ is the sum of the weights at points of $p\inverse(x)$.   The invariant weight vector $w$ is the weight vector {\it induced by $w'$}.
 
 We have not given a formal definition of an {\it invariant weight vector} on a branched manifold $B$.   If $B$ is a ball, there is just one sector, and the invariant weight vector is any element of $\LB$.  The local model $B_\ell$ has a splitting consisting of $k$ disjoint balls $D_i$, $i=1,2,\ldots k$.  An invariant weight vector on this disjoint union of balls assigns any weight to each of the $D_i$.  Then an invariant weight vector on $B_\ell$ is a weight vector induced by the weight vector on the disjoint union of balls $D_i$.
 
If a lamination $L$ is carried by $B$, we may assume $L$ is transverse to fibers of $N(B)$ and $\bdry_hN(B)$ is contained in $L$.  If  only one leaf of $L$ passes through some fiber of $N(B)$, one cannot isotope $L$ to ensure $\bdry_hN(B)$ is contained in $L$, but after replacing a leaf by the boundary of a suitably tapered regular neighborhood of itself, one can achieve this.   The {\it interstitial $I$-bundle for $L$ in $N(B)$} is the completion of $N(B)\setminus L$, with fibers mapping into fibers of $N(B)$.   A {\it splitting of $B$ along $L$} is a splitting $B'$ such that $N(B)=N(B')\cup J$ with $J$ mapped to the interstitial bundle respecting fibers.
 
Suppose now that the branch locus of $B$ is generic.   In terms of a local model $V(B_\ell)$, this means that the projections to $D$ of the cusp manifold gives a family of smooth $m-1$-balls in general position in the $m$-ball $D$.   If $B$ carries a lamination $L$ with a transverse $\LB$-measure $\mu$, one obtains a {\it weight vector induced by $L$} assigning a value of $\LB$ to each sector of $B$:  The weight on a sector $Z$ of $B$ is $\mu(T)$ where $T=\pi\inverse(\{z\})$ is a fiber over a point $z$ in the interior of the sector.  The weight vector induced by $\LB$ satisfies certain equations called {\it branch equations}:    Regarding the branch locus as a cell-complex, for every $m-1$-cell $\beta$ in the branch locus, there are three sectors $Z_0$, $Z_1$, and $Z_2$ adjacent to $\beta$ such that $Z_0$ and $Z_1$ are joined smoothly at $\beta$, and similarly $Z_0$ and $Z_2$ are joined smoothly along $\beta$.   If $w_0$, $w_1$, and $w_2$ are the weights on $Z_0$, $Z_1$, and $Z_2$ respectively, we obtain a branch equation $w_1+w_2=w_0$.    All the branch equations arise in this way.    A weight vector with entries in $\LB$ is called an {\it invariant weight vector} if it satisfies the branch equations.
  \end{defns}

Recall that the points in the
 branch locus of a train track (branched 1-manifold) are called {\it switch points}.

\begin{proof}[Proof of Theorem \ref{FiniteDepthThm}]  Suppose $L$ is a multi-level codimension-1 measured lamination as defined in the introduction, Definition \ref{FiniteDepthLam}, in terms of the sequence $L_i$, $i\in \integers$, of measured laminations with measure $\mu_i$ for $L_i$.   In the proof, we assume that some of the $L_i$'s may be empty.   By reindexing to omit empty laminations and shifting indices, we may suppose that we have a finite sequence of nonempty laminations, a bi-infinite sequence of nonempty laminations, a sequence with $L_i\ne\emptyset$ if and only if $i\le 0$ (infinite depth), or a sequence with $L_i\ne\emptyset$ if and only if $i\ge 0$ (infinite height).  Recall $L_i\subset M_i$,  where $M_i=M\setminus\bigcup_{j>i}L_j$. 

We must show that $L$ has a $(\PB,\bar\OB)$ transverse measure.  Given a transversal $X$ for the lamination, we let $X_i=X\cap L_i$ for $i\in \integers$.   The full support transverse measure $\mu_i$ on $L_i$ gives a full support Radon measure $\rho_i$ on $X_i$, and we see that we can define a Borel decomposable measure  $\mu|_X$ as we did in the previous section.  Abusing notation by writing $\mu$ instead of $\mu|_X$, for any measurable $E\subset X$ we define an $\bar \OB$-measure $\mu(E)=\sum_i(i,\rho_i(E\cap X_i))$.   Then cleary $\mu$ is a $(\PB,\bar\OB)$-measure on $X$, with all the properties described in Lemma \ref{POLemma}. 

  It is not difficult to check that $\mu$ is a transverse measure.  Invariance of the transverse measure $\mu$ follows from the invariance of the transverse measures $\mu_i$.

For the converse, suppose $L\embed M$ has a transverse $(\PB,\bar\OB)$-measure $\mu$.  This means that if $X$ is a transversal for $L$, there is a measure $\mu|X$ (for simplicity, just $\mu$) which is a $(\PB,\bar \OB)$-measure.   In particular, the fact that $\mu$ is Borel decomposable means that $\mu$ is defined by $\mu(E)=\sum_i(i, \rho_i(E\cap X_i))$, where the subspaces $X_i$ of $X$ have the property that $Y_i=\cup_{j\ge i} X_j$ is closed and $\rho_i$ is a Radon measure on $X_i$.   We let $L_i$ be the union of leaves that intersect $X_i$ for some transversal $X$, with a transverse measure $\mu_i$ equal to $\rho_i$ on a given transversal $X$.  The lamination $L_i$ and the transverse measure $\mu_i$ is well-determined by this condition, not depending on the choice of transversal.  This is because the invariant image of $\rho_i$ by a homotopy of transversals must be the same as the $\rho_i$ in the image transversal by the part of Lemma \ref{POLemma} which guarantees the uniqueness of the decomposition into $X_i$ and $\rho_i$.  Because $Y_{i+1}$ is closed for every transversal $X$, $\cup_{j>i}L_j$ is a lamination.   Invariance of the transverse measure $\mu$ guarantees invariance of transverse measures $\mu_i$. 
 
Now suppose $L$ is essential in a closed surface $M$, or a surface with cusps of finite type with $\chi(M)<0$.  If $L$ had infinitely many measured levels, using an Euler characteristic argument we would conclude that for sufficiently large $j$, $M_j$ is a product. But we are assuming no leaf of $L_j$ isotopes into a leaf in $\bdry M_j$, and the fact that $L$ is essential implies that there are no 2-dimensional Reeb components, so $L$ cannot have complementary products.  \end{proof}

To understand laminations with transverse $(\PB_r,\bar\OB_r)$-measures via Theorem \ref{RecursiveMultiThm}, we first give a precise definition of $\integers^r$ multi-level measured laminations.  Intuitively, a lamination $L$ of this type is one which can be constructed recursively as follows.   The lamination $L$ can be expressed as $\cup_{i\in \integers} L_i$ where $L_i$ is a $\integers^{r-1}$ multilevel measured lamination, with $L_i$ a lamination in the complement of $\cup_{j>i}L_j$ such that $\cup_{j\ge i}L_j$ is also a lamination.  Some of the $L_i$ may be empty.  The recursion ends with $\integers^0$ multi-level measured laminations, which are just measured laminations, or one could end with  $\integers^1$ multi-level measured laminations, which are multi-level measured laminations as defined before.  

\begin{defn}  Let $\integers^r$ denote the product with the lexicographic ordering.   A {\it $\integers^r$ multi-level measured lamination $L$ in $M$} is one which can be written as $\displaystyle L=\cup_{i\in \integers^r}L_i$, where $\cup_{j\le i}L_j$ is a lamination in $\displaystyle M\setminus \cup_{j>i}L_j$, and where $\displaystyle \cup_{j>i}L_j$ is  required to be a lamination.  We also require that no leaf of $L_i$ be isotopic to a leaf of  $\displaystyle \cup_{j>i}L_j$.  (All subscripts in $\integers^r$.)
\end{defn}

Now we can prove the theorem in the introduction relating $(\PB_r,\bar\OB_r)$-measures to $\integers^r$ multi-level measured laminations.  Before we prove the theorem, here is an example.

 \begin{example}  We describe a lamination in the plane with countably many leaves, carried by a train track $B$ shown in Figure \ref{ZRExample}.   The first leaf $L_{(0,0)}$ is a line in the plane.   It has atomic measure $1$ at level $(0,0)$.   Other leaves $L_{(-1,j)}$ have atomic measure $1$ at levels $(-1,j)$.  We show a few leaves in the figure.   The induced $\PB_2$-weight vector on $B$ is shown.  In order to make the different leaves non-isotopic, we introduce some topology in each digon, as indicated by $X$'s, for example we could introduce a hole.
 \end{example}

\begin{figure}[H]
\centering
\scalebox{0.6}{\includegraphics{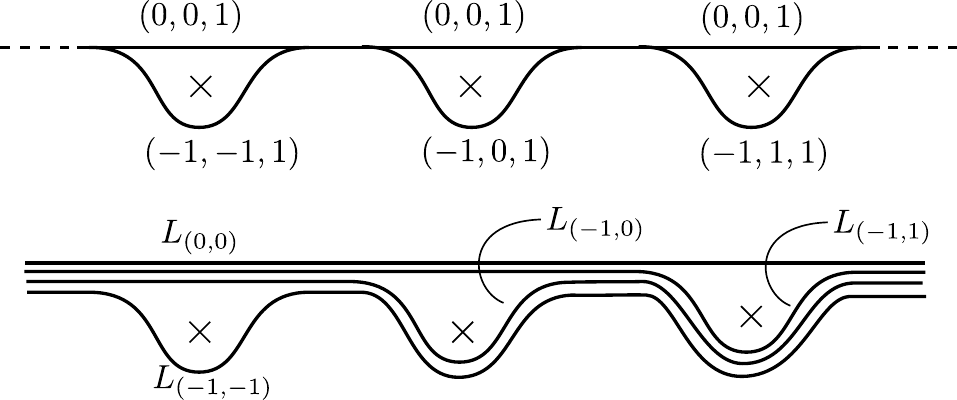}}
\caption{\small Example of a $\integers^2$ multi-level lamination.}
\label{ZRExample}
\end{figure}

\begin{proof}[Proof of Theorem \ref{RecursiveMultiThm}]  The proof is by induction.  We have proved the result for multi-level measured laminations: namely the structure of a multi-level measured lamination for $L\embed M$ corresponds to a transverse $(\PB,\bar\OB)$ measure.   We observe that a $\integers^1$ multi-level measured lamination is the same as a multi-level measured lamination, so our previous result starts the induction.    

Now suppose $\displaystyle L=\bigcup_{i=-\infty}^\infty L_i$ where $L_i$ a  $\integers^{r-1}$ multi-level measured lamination, with each $L_i$ embedded in $\displaystyle M_i= M\setminus \bigcup_{j>i}L_j$. 
We must show that $L$ has a $(\PB_r,\bar\OB_r)$- transverse measure, assuming $L_i$ is a $(\PB_{r-1},\bar\OB_{r-1})$ measured lamination..  Given a  transversal $X$ for the lamination, we let $X_i=X\cap L_i$ for $i\in \integers$.   The full support transverse $(\PB_{r-1},\bar\OB_{r-1})$-measure $\mu_i$ on $L_i$ gives a full support Borel  $(\PB_{r-1},\bar\OB_{r-1})$- measure $\rho_i$ on $X_i$, and we see that we can define a Borel decomposable measure  $\mu|_X$ as we did in the previous section.  Abusing notation by writing $\mu$ instead of $\mu|_X$, for any measurable $E\subset X$ we define an $\bar \OB_r$-measure $\mu(E)=\sum_i(i,\rho_i(E))$.   Then cleary $\mu$ is a $(\PB_r,\bar\OB_r)$-measure on $X$, with all the properties described in Lemma \ref{PORLemma}. 

  It is not difficult to check that $\mu$ is a transverse measure.  Invariance of the transverse measure $\mu$ follows from the invariance of the transverse measures $\mu_i$.

For the converse, suppose $L\embed M$ has a transverse $(\PB_r,\bar\OB_r)$-measure $\mu$.  This means that if $X$ is a transversal for $L$, there is a measure $\mu|_X$ (for simplicity, just $\mu$) which is a $(\PB_r,\bar \OB_r)$-measure.   In particular, the fact that $\mu$ is recursively Borel decomposable means that $\mu$ is defined by $\mu(E)=\sum_i(i, \rho_i(E\cap X_i))$, where the subspaces $X_i$ of $X$ partition $X\cap L$ and have the property that $Y_i=\cup_{j\ge i} X_j$ is closed and $\rho_i$ is a recursively Borel decomposable $(\PB_{r-1},\bar\OB_{r-1})$-measure on $X_i$.   We let $L_i$ be the union of leaves that intersect $X_i$ for some transversal $X$, with a transverse measure $\mu_i$ equal to $\rho_i$ on a given transversal.  The lamination $L_i$ and the transverse measure $\mu_i$ is well-determined by this condition, not depending on the choice of transversal.  This is because the invariant image of $\rho_i$ by a homotopy of transversals must be the same as the $\rho_i$ in the image transversal by the part of Lemma \ref{PORLemma} which guarantees the uniqueness of the decomposition into $X_i$ and $\rho_i$.  Because $Y_{i+1}$ is closed for every transversal $X$, $\cup_{j>i}L_j$ is a lamination.   Invariance of the transverse measure $\mu$ guarantees invariance of transverse measures $\mu_i$.  
\end{proof}

If $B$ is compact with finitely many sectors, finitely many branch equations suffice to determine whether a weight vector is invariant.  When $\KB=[0,\infty)\subset \reals$, an invariant weight vector uniquely determines a measured lamination carried by $B$.  This is not  true  for arbitrary $\KB$, but we prove Proposition \ref{TrackExistenceProp}, stated in the introduction, which solves the realization problem for train tracks in surfaces.

 \begin{proof}[Proof of Proposition \ref{TrackExistenceProp}]  If one can inductively define a sequence of splittings $\ldots B_{3}\prec B_{2}\prec B_{1}\prec B_0=B$, each $B_i$ with an weight vector $\bw_i$, such that the $\PB$ weight vector on $B_i$ induces the weight vector on $B_{i-1}$, then the inverse limit of $(B_i,\bw_i)$ defines a measured lamination provided the inverse limit of the $B_i$'s is a lamination.  Here $B_{1}\prec B_0$ means $B_1$ is a splitting of $B_0$.   A good scheme for finding a sequence of splittings whose inverse limit actually defines a lamination is to split in a neighborhood of one segment (sector) of the train track at a time.   We fix a cell structure for $B$ dividing it into 1-cells such that each 1-cell has distinct ends.  This can be achieved by subdividing the segments of the train track.   For simplicity, we refer to the 1-cells as ``segments."   If $\sigma_2$ and $\sigma_3$ are segments 
joining smoothly to $\sigma_1$ at a switch $s$ we can split $\sigma_1$ to obtain two copies.   At the other end of $\sigma_1$ we split a little beyond the end of the segment, see Figure \ref{SplitTrack}.    It is easy to choose $\PB$-weights on the split train track $B_{i+1}$ which induce the weight on $B_i$.  There is only one situation where there is a choice, see Figure \ref{SplitTrack} (c), (d), where new weights at lower levels may be introduced.  We perform a sequence of splittings, cycling through switch points so that we ``make progress" at each switch.   If we do not introduce unnecessary new segments as in Figure \ref {SplitTrack} (d), then the number of switches does not increase.  Further, we observe that avoiding unnecessary new segments gives a canonical way of splitting, and also gives a canonical realization of the weights on $B$ by a $(\PB,\bar\OB)$-measured lamination.
 \end{proof}

\begin{figure}[H]
\centering
\scalebox{0.8}{\includegraphics{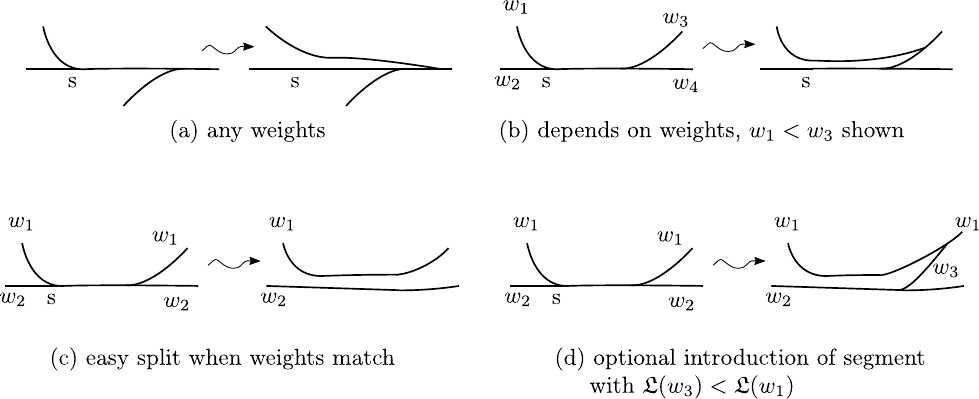}}
\caption{\small How to split train track with $\PB$ weights to obtain $\OB$-measured lamination.}
\label{SplitTrack}
\end{figure}

In the introduction we described the general {\it realization problem}, the problem of determining whether an invariant vector of weights in some ordered algebraic structure, assigning weights to sectors of a codimension-1 branched manifold $B$, is induced by by a lamination carried by $B$ with a transverse measure in the same ordered algebraic structure.

We now know that invariant $\PB$-weights on a branched manifold $B$ do not determine an $\bar\OB$-measured lamination uniquely.  We also know that invariant $\PB$-weights on a train track $B$ in a surface can be realized by a canonical $(\PB,\bar\OB)$-measured lamination carried by $B$.   So the next question is whether the realization problem can be solved in higher dimensions.  In particular, we can ask the following.   If $\bw$ is an  invariant $\PB$ weight vector on $B$ a branched surface embedded in a 3-manifold $M$, is there a $(\PB,\bar\OB)$-measured lamination $(L,\mu)$ (a finite depth measured lamination) which induces the weight vector $\bw$?  Without additional assumptions, the answer is no, as the following example shows.

\begin{example}  Let $B$ be a disk of contact branched surface (embedded in the 3-ball) with one additional triangular sector, forming what is called a ``twisted disk of contact," see Figure \ref{TDC}.   The $\PB$ weights indicated in the figure are not realizable.   The weights on the boundary indicate the boundary lamination must consist of two closed curves together with a leaf which spirals to limit on both of these closed curves.   But the level -1 lamination on the boundary is a spiral leaf which certainly does not bound a measured lamination in the product (disk) $\times I$, by Reeb stability. 
\end{example}

\begin{figure}[H]
\centering
\scalebox{0.6}{\includegraphics{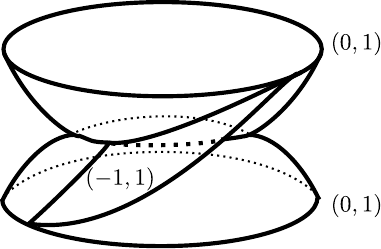}}
\caption{\small Twisted disk of contact with $\PB$ invariant weights.}
\label{TDC}
\end{figure}

It is possible that the only obstruction is related to weights in neighborhoods of surfaces of contact, or immersed surfaces of contact in $B$.   It is also possible that the only obstruction in higher dimensions is analogous.

To state a theorem addressing the realization problem in higher dimensions, we make some definitions.

\begin{defn}  Suppose $B\embed M$ is a codimension-1 branched manifold and suppose $w$ is an invariant $\PB$ weight vector on $B$.   ($B$ may not be compact and the weight vector may have infinitely many entries.)  For each $m\in \integers$, let $B_m$ be the union of sectors $Z_i$ of $B$ with weight vectors $w_i$ satisfying $\level(w_i)\ge m$.   We say $\{B_m\}$ is the {\it sequence of branched manifolds associated to $B$ with the weight vector $w$}.   \end{defn}

Some observations:  Clearly $ B_m\setminus B_{m+1}$ is a branched manifold, improperly embedded in $M$, to which $w$ assigns weights at level $m$, and these weights then determine a measured lamination carried by  $ B_m\setminus B_{m+1}$.   The problem is to show that the measured laminations carried by $ B_m\setminus B_{m+1}$ can be extended to form a measured laminations $L_m$ whose union is a multilevel measured lamination inducing the weight vector $w$ on $B$.  The branched manifold $C_m=B_m\setminus B_{m+1}$ is attached to $B_{m+1}$ along a codimension-2 branched manifold in $\bdry C_m$ to obtain $B_m$.  (If $M$ and $B$ have no boundary, $C_m$ is attached along all of its boundary branched manifold.)  
There is another way of describing the attachment.   Replace $B_{m+1}$ by its regular neighborhood $V(B_{m+1})$.  Then attach $C_m$ to $\bdry V(B)\setminus V$ on (part of) its boundary, then apply the projection which collapses fibers of $V(B_{m+1})$ to points of $B_{m+1}$.   (Recall $V$ is the cusp manifold in $\bdry V(B)$.)

\begin{defn}  We say $C_m$ is {\it smoothly attached} to $B_{m+1}$ if in the above construction $\bdry C_m$ is disjoint from the cusp curves of $V(B_{m+1})$.  Alternatively, $B_m$ is obtained from $N(B_{m+1})$ by attaching $\bdry C_m$ to $\intr(\bdry_hN(B_{m+1}))$, then applying the projection $\pi$ which replaces $N(B_{m+1})$ by $B_{m+1}$.
\end{defn}

We will give a sufficient condition for the realizability of invariant $\PB$-weight vectors on certain branched surfaces in 3-manifolds.   To do this we need the following lemma.

\begin{lemma}\label{RealizeLemma}  Suppose $Q$ is an $n$-manifold and $F$ is a manifold in $\bdry Q$.  Suppose $B$ is a branched codimension-1 manifold embedded in $Q$ (properly or not) consisting of $F$ with a branched manifold $C$ attached to $F$ (on one side of $F$, since $F\subset \bdry Q$).    Suppose we are given an invariant weight vector $w$ on $B$ with entries having values in $[0,\infty]\subset \bar\reals$, where the weights are $\infty$ on all sectors in $F$ and finite on sectors of $C$.   These weights are realizable by a canonical measured lamination (not including $F$ as leaves) in $\intr(Q)$, where the transverse measure is locally finite in the interior of $Q$, and where the measure is infinite on any open transversal in the interior whose closure intersects $F$.
\end{lemma}

\begin{proof}  Using charts, it is easy to show that the invariant weight vector on $C$ uniquely determines a measured lamination carried by $C$.   The difficulty lies in constructing a lamination on a regular neighborhood in $B$ of $F$, that is to say the intersection of a regular neighborhood in $Q$ of $F$ intersected with a regular neighborhood of $B$.   If $F$ is not connected, we replace $F$ by one of its components and assume it is connected.   Our argument applies to each component of the original $F$.   We also assume that $B$ is the regular neighborhood of a connected $F$ and $B$ is homotopy equivalent to $F$, and we may assume $Q$ is a product $F\times I$.  We assume $C$ deformation retracts to $F$.   $C$ is attached to $F$ on a set which we can regard as a transversely oriented branched manifold $\tau$ of dimension $n-2$, where the transverse orientation comes from the sense of branching where $C$ is attached.   (One can think of the case $n=3$, where $\tau$ is a train track in the surface $F$, and $\tau=\bdry C$.)  The invariant weight vector on $B$ induces an invariant weight vector on $B$ and on $C$, hence also on $\tau$, where the weights on $\tau$ are the same as the weights on the adjacent sectors in $C$, not $F$, and are therefore finite.    Let $v$ be the induced weight vector on $\tau$.  Then the transversely oriented lamination $\tau(v)$ can be interpreted as a cohomology class in $H^1(F;\reals)$.     If the class $[\tau(v)]$ is trivial, then there is a good way of realizing the weights as a Radon transverse measure on a lamination as follows.   We regard $V(B)$ as a neighborhood with the measure of a fiber over an interior point of a sector given by the weight on the sector.   For fibers of $\pi\inverse(F)$ we also identify the fiber with a subset of $\reals$.   Choose a sector $Z_0$ in $F$ at random, and identify fibers over the interior points of $Z_0$ with $[-\infty, 0]$ as shown in Figure \ref {OrderAltitude}.  We would like to model $\pi\inverse(F)$ as a subset of $F\times [-\infty,\infty)$ such that the measure on fibers of $\pi\inverse(F)$ coming from the identification extends consistently to $\pi\inverse(B)$, where fibers over the interior of a sector $Z_j$ of $C$ are identified with an interval of length $w_j$ in $\reals$.   Here $F\times \{-\infty\}=F\subset \bdry Q$.  Fibers over interior points of a sector $Z_i$ in $F$ are identified with $[-\infty,z_i]$, and we call $z_i$ an {\it altitude}.  The altitudes $z_i$ and weights $w_i$ must then satisfy {\it branch equations}.   In the figure, $z_2+w_1=z_0=0$ and $z_2+w_3=z_4$.  Thus the branch equations are just like the branch equations for weights.  When $[\tau(v)]$ is trivial as a cohomology class, we claim this kind of structure for $V(B)$ exists.

\begin{figure}[H]
\centering
\scalebox{0.6}{\includegraphics{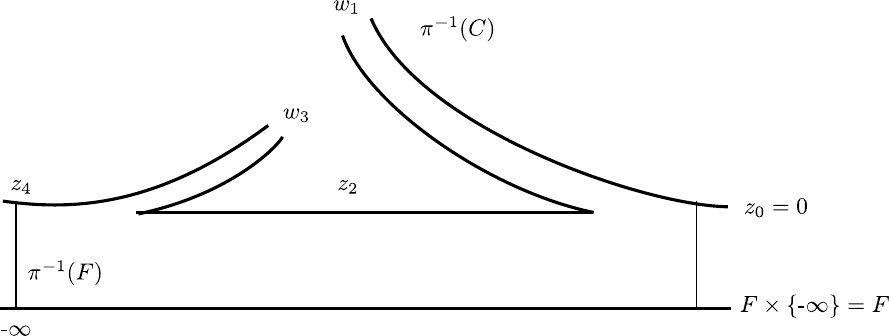}}
\caption{\small Realizing weights on $B$ as $V(B)$ with measure on fibers.}
\label{OrderAltitude}
\end{figure}

We think of the cohomology class $[\tau(v)]$ as an element of $\phi\in \hom(\pi_1(F),\reals)$.   Thus, for an element $[\gamma]\in \pi_1(F)$, making $\gamma$ transverse to $\bdry C$, $\phi(\gamma)$ is the ``net change in altitude" of the horizontal boundary in $\pi\inverse(F)$ as we traverse $\gamma$.  If we assume the cohomology class is trivial,  $\phi(\gamma)=0$ is trivial for any closed $\gamma$, so the net change in the altitude of sectors going around the loop is $0$.   In the Figure, if $\delta$ is a path in $F$ from the left side to the right, the net change of altitude $-w_3+w_1$.   Thus we have shown that if the class $[\tau(v)]$ is trivial, $B$ carries a measured lamination with infinite transverse measure on fibers of $\pi\inverse(F)$.  

If the cohomology class $[\tau(v)]$ is non-trivial we pass to the universal cover $\tilde Q$ of $Q$, containing the universal cover $\tilde B$ of $B$, which contains the universal cover $\tilde F$ of $F$.  (Recall we replaced $Q$ by the product $F\times I$.)   Letting $\tilde C$ be the lift of $C$, which attaches to $\tilde F$ on $\tilde \tau$, and letting $\tilde w$ be the lifted weight vector, which induces the weight vector $\tilde v$ on $\bdry \tilde C=\tau$, we can use the above argument to construct a measured lamination carried by $\tilde B$ induced by the same weights:   In $\tilde F$, all cohomology classes are trivial, in particular $[\tau(\tilde v)]$ is trivial, hence the weight vector $\tilde w$ can be realized by a measured lamination as before.   Covering translations preserve this measured lamination, but the measured model of $V(\tilde B)$ (analogous to the one in Figure \ref{OrderAltitude}) is not invariant, as the altitudes are not necessarily preserved by covering translations.  Applying a covering translation corresponding to $\gamma\in \pi_1(F)$ preserves the weights $w_i$, but changes the altitudes:  a fiber with altitude $z_j$ over a sector of $F$ is mapped to a fiber with altitude $z_j+ \phi(\gamma)$.

Recall that we have dealt with one component of the original $F$, but we can show in the same way that neighborhoods in $B$ of the other components of $F$ carry laminations realizing the weights.  Then the measured lamination determined by the weights on $C$ can be combined with the lamination we have constructed near each component of $F$ to obtain the required measured lamination.
\end{proof}

The simplest example of the construction in the above proof involving a non-trivial cohomology class is a spiral train track $B$ in an annulus $Q$, with $F=\bdry Q$, with weights as shown in Figure \ref{OrderSpiral}(a).  The construction yields a locally finite spiral $\reals$-measured lamination in $\intr(Q)$ with infinite transverse measure near $F=\bdry Q$.    If we include $F$ in the lamination, we can re-interpret the enlarged lamination as a $(\PB,\bar\OB)$-measured lamination, with transverse measure at level -1 limiting on both circles of $\bdry Q$ which have atomic transverse measures at level 0.  The induced weights on $B$ are shown in Figure \ref{OrderSpiral}(b).   
We  use the idea in this example to prove the following theorem.

\begin{figure}[H]
\centering
\scalebox{0.6}{\includegraphics{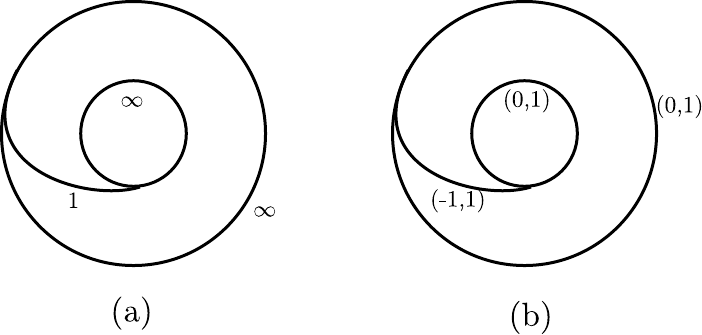}}
\caption{\small Example of $B$ with $\PB$ weights where one must pass to a cover to construct $(\PB,\bar\OB)$-measured lamination.} 

\label{OrderSpiral}
\end{figure}

\begin{thm}   \label{PORealizeThm} Suppose $B$ is a branched manifold (compact or non-compact) embedded in a 3-manifold $M$, and suppose $w$ is an invariant $\PB$ weight vector on $B$, with associated branched surfaces $B_m$.   If for every $m$, $C_m=B_m\setminus B_{m+1}$ is smoothly attached to  $B_{m+1}$, then the weight vector $w$ is realizable by a $(\PB,\bar\OB)$-measured lamination carried by $B$ and the realization is canonical.
\end{thm}

\begin{proof}   Let us begin with a special case:  Suppose $B_{q+1}=\emptyset$ for some integer $q$.   This means that the levels of weights (from the weight vector $w$) on sectors of $B$ are bounded above by $q$, which is the case when $B$ is compact.   Without loss of generality, we may suppose that $B_q$ is non-empty.   We let $w_q$ be a weight vector assigning real weights to sectors of $B_q$ derived from $w$.  Thus $w_q$ skips entries of $w$ for sectors which are not in $B_q$.   Further, if $w_j$ is a $\PB$ weight on a sector of $B_q$, we let $(w_q)_j=\residue(w_j)$, which is real.   The invariant weight vector $w_q$ of $w$ on $B_q$ then uniquely determines a $\reals$-measured lamination carried by $B_q$.  Another way of saying this is that there is an infinite sequence of splittings $B_{q,i} $, (with $i=0,1 \ldots$, and  $B_{q,0}=B_q$)  and weight vectors $w_{q,i}$ such that  $w_{q,i+1}$ on $B_{q,i+1}$ induces  the weight vector $w_{q,i}$ on  $B_{q,i} $.  The measured lamination $L_q$ carried by $B_q$ is the inverse limit of $(B_{q,i},w_{q,i}) $.  

We will prove by induction that the weight vector on $B_p$, $p\le q$ determines a $(\PB,\bar\OB)$-measured lamination carried by $B_p$, with weights in levels $p\le j\le q$.  So we assume we have a $(\PB,\bar\OB)$- measured lamination carried by $B_{p+1}$ realized as an inverse limit of branched manifolds $B_{p+1,i}$ with weight vectors $w_{p+1,i}$, again with $i=0,1,2\ldots$.   Now $C_p$ is attached to $\bdry_hN(B_{p+1})$, so that applying the projection to $N(B_{p+1})$ replaces $C_p\cup N(B_{p+1})$ by $B_p$.   In fact, because $\bdry_hN(B_{p+1})\subset \bdry_hN(B_{p+1,i})$ for each $i$ we also have $C_p$ attached to $\bdry_hN(B_{p+1,i})$, so that applying the projection to $N(B_{p+1,i})$ replaces $C_p\cup N(B_{p+1,i})$ by  a branched manifold we will call $B_{p,i}$.   We have $B_{p,i}$ constructed from $B_{p+1,i}$ with $C_p$ attached.   For sufficiently large $i$, we may assume $\bdry C_p$ is attached to $\bdry_hN(B_{p+1,i})$ far from $\bdry_ v N(B_{p+1,i})$, where `` far" could be measured combinatorially using a cell decomposition on the original $B$.   We now apply Lemma \ref{RealizeLemma} with $Q$ the closure of $M\setminus  N(B_{p+1,i})$ and with $F=\bdry_h N(B_{p+1,i})$.  The real parts of the weights on $C_p$ together with $\infty$ on components of $F$ give an invariant weight vector for $C_p\cup F$.   The lemma says that this weight vector on $C_p\cup F$ can be realized as a measured lamination which does not include $F$.   This is part of the measured lamination we want to construct at level $p$.  Again measuring distance in terms of a cell decomposition on $B$, we can split along cusp manifolds at $\bdry C_p$ to split $C_p$ away from $B_{p+1,i}$ some distance, using the measured lamination carried by $C_p\cup F$ as a guide.   (Split by removing some interstitial bundle associated to the measured lamination.)     We introduce a finite number of splittings between $B_{p,i}$ and $B_{p,i+1}$ which enlarge $C_p$ in this way by splitting away from $B_{p+1,i}$.    We do the corresponding splittings on $B_{p,i'}$ for $i'>i$, so that we still have an inverse sequence $B_{p,i}$.  We also reindex the splittings $B_{p,i}$ to include the new splitting operations introduced between $i$ and $i+1$ and between $i'$ and $i'+1$, $i'>i$.   Note that all the newly introduced splittings between $i$ and $i+1$ ($i'$ and $i'+1$) fix $B_{p+1,i}$ ($B_{p+1,i'}$).    After splitting $C_p$ has changed (become larger), so we call it $C_{p,i}$.   Of course, since we split by by removing interstitial bundle between leaves of a measured lamination, we are also obtaining new weight vectors $w_{p,i}$ on $B_{p,i}$.  Now we follow the same procedure for some larger $i$, say $i'$, splitting $C_{p,i'}$ from $B_{p+1,i'}$.   Continuing indefinitely, we obtain a valid inverse sequence $B_{p,i}$ with weight vectors $w_{p,i}$ whose inverse limit is a  $(\PB,\bar\OB)$-measured lamination with levels $p\le j\le q$.   What we have constructed so far is a finite depth (or finite height) multi-level measured lamination which could be expressed in the usual way as $\displaystyle \bigcup_{p\le j\le q}L_j$, where each $L_j$ is a lamination in the complement of higher level laminations.

By induction, we construct a realization of the weight vector on the original $B=B_q$ induced by the measures on the laminations $L_j$ in  $\displaystyle \bigcup_{ j\le q}L_j$.    We are using Theorem \ref{FiniteDepthThm} to move from $(\PB,\bar\OB)$-measured laminations to multilevel measured laminations and vice versa.

In general, there is no upper bound for $q$.  In that case we apply the above argument to the branched manifold $A_q=\cup_{p\le q} C_p$ for some fixed $q$ to obtain a lamination which realizes the weights on $A_q$.  Next, we apply the argument to $A_{q+1}$ to obtain a lamination realizing the weights on $A_{q+1}$ and we see from the uniqueness of the laminations obtained from the construction, that the lamination obtained from $A_{q+1}$ extends the lamination obtained for $A_q$.  Repeating the process indefinitely, we construct the entire lamination realizing weights. \end{proof}

We shall also need a realization theorem for codimension-1 branched manifolds $B\embed M$ with weights in $\PB_r$.   As before, for any $m\in \integers^r$ we can define the associated branched surface $B_m$ as the union of sectors with weights at level $\ge m$, and we can define $C_m$ for every $m\in \integers^r$ such that $B_m=B_{(m_1,m_2,\ldots,m_r)}= B_{(m_1,m_2,\ldots,m_r+1)}\cup C_{(m_1,m_2,\ldots,m_r)}$.  It follows from the locally finite structure of  $B$, that in any compact submanifold of $M$, only finitely many weights at finitely many levels occur.   Although globally the levels have a more complex structure, being indexed by $\integers^r$ with the lexicographical order, on a compact submanifold $\hat M$ of $M$ the weights on $\hat B=B\cap \hat M$ take only finitely many values with finitely many levels.  An example of the type of branched surface of the kind we are analyzing here is shown in Figure \ref{OrderArrayBr}.  In the example, $r=2$ and the branched surface looks like a 2-dimensional array.  When we restrict to a branched submanifold $\hat B$ with its invariant weight vector, since there are only finitely many levels in $\integers^r$ represented by the weights, we are in the setting of the previous theorem and can realize the weights on $\hat B$ by a $(\PB_r,\bar\OB_r)$-measured lamination, with only finitely many levels having non-trivial transverse measures.    Then, exhausting $B$ by a nested sequence of compact branched surfaces, we can use the previous theorem to prove the following theorem:

\begin{thm}  \label{PrOrRealizeThm} Suppose $B$ is a branched manifold (compact or non-compact) embedded in a 3-manifold $M$, and suppose $w$ is an invariant $\PB_r$ weight vector on $B$, with associated branched surfaces $B_m$.   If for every $m$, $C_m=B_m\setminus B_{m+1}$ is smoothly attached to  $B_{m+1}$, then the weight vector $w$ is realizable and the realization is canonical.
\end{thm}

We finish this section with an example which illustrates the potential usefulness transverse measures with values in ordered algebraic structures obtained by mixed insertion.

\begin{example}  \label{MixedExample} Let $S$ be a surface and let $\LB$ be the ordered abelian group $\displaystyle \naturals_0\bigins_{0\le n\le 2} B_n$  where $B_0=[0,\infty]$, $B_1=[0,\infty]$ and $B_2= \bar\naturals_0$.  What are the essential laminations in $S$ with transverse Borel $\LB$-measures with locally finite associated measures?  At the highest level, they have integer coefficients, so they are curve systems.  In the complement of the curve systems, at the next level, we have a measured lamination, and in the complement of the union of the curve system with a measured lamination, we have another measured lamination.  We see that $\LB$-measured laminations have a very particular structure, depending on $\LB$.   \end{example}

\section {Laminations well-covered by  $(\PB_r,\bar\OB_r)$-measured laminations.} \label{WellCoveredSection}

There is a well-known method for constructing more interesting laminations with a transverse structure defined only locally.  Suppose we are given a lamination $L\embed M$, where $M$ is a manifold, and the lift $\tilde L$ to the universal cover $\tilde M$ has a transverse real structure $\mu$.  Further suppose that the action of $\pi_1(M)$ on $\tilde L$ preserves $\mu$, but only up to scalar multiplication.  Thus there is a {\it stretch homomorphism} $\phi: \pi_1(M)\to (\reals_+,\cdot)$ such that for $\gamma\in \pi_1(M)$, the translate $\gamma (\tilde L,\mu)=(\tilde L,\phi(\gamma)\mu)$.  This is called a lamination with a transverse {\it affine structure}, see \cite{UO:Stretch}, \cite{HO:AffineSpaces}, \cite{OP:Affine}, \cite{OP:Broken}.   There are many laminations which do not have transverse $\reals$-measures, but do admit affine structures.  (Usually we deal with codimension-1 laminations.)  The homomorphism $\log( \phi)$ can be interpreted as a cohomology class $\log\phi\in H^1(M;\reals)$.  The group $ (\reals_+,\cdot)$ actually represents the group of order automorphisms of the additive semi-group $(\reals_+,+)$.

We can play the same game with transverse $\LB$ structures, but we want $\LB$ to be a multiplicative group so that we can again define a homomorphism $\phi:\pi_1(M)\to (\LB,\cdot)$.  Thus we would like to use $\LB=\PB$, or $\LB=\PB_r$, the only semifields we have encountered, with multiplicative inverses.  Since $(\PB,\bar\OB)$-measures are determined by their values in $\PB$, we can work with these, or, similarly, we can work with $(\PB_r,\bar\OB_r)$ measures.   In place of the scalar multiplication used in the construction of affine laminations, we use ordered abelian semigroup automorphisms $\psi_\lambda:\bar\OB_r\to\bar\OB_r$ defined by $\psi_\lambda(x)=\lambda x$, $\lambda\in \PB_r$, see Proposition \ref{OrderAutoProp}.

\begin{defn}  A lamination $L\embed M$ is {\it well-covered} by a $(\PB_r,\bar\OB_r)$-measured lamination if there is a transverse $(\PB_r,\bar\OB_r)$-measure $\mu$ for the lift $\tilde L$ of $L$ to the universal cover $\tilde M$  with the property that there exists a homomorphism $\phi:\pi_1(M)\to (\PB_r,\cdot)$ such 
that $\gamma (\tilde L,\mu)=(\tilde L,\phi(\gamma)\mu)$ on compact subsets of transversals.  Thus if $\phi(\gamma)=\lambda\in\PB_r$, then the covering translation moves $\tilde L$ to itself, but applies $\psi_\lambda$ to the transverse measure.
Regarding $\PB_r$ as a product $\integers^r\times (0,\infty)$, the homomorphism $\phi$ yields homomorphisms $\phi_i:\pi_1(M)\to \integers$, $i=1,2,\ldots,r$ by projecting to the $i$-th factor,  which can be interpreted as an element of $H^1(M;\integers)$ and is called a {\it level shift homomorphism}.   In addition, $\phi$ determines a homomorphism and $\phi_\reals:\pi_1(M)\to \reals_+$, called the {\it stretch homomorphism},  and $\log \phi_\reals$ can be interpreted as a cohomology class in $H^1(M;\reals)$.  

If a lamination $L$ is embedded in a manifold $M$ and is carried by a branched manifold $B$, we can always replace $M$ by another smaller manifold, for example a manifold which deformation retracts to $B$.   If $L$ is well-covered by a  $(\PB_r,\bar\OB_r)$-measured lamination in the universal cover of this smaller manifold, we still say $L$ is {\it well-covered by a  $(\PB_r,\bar\OB_r)$-measured lamination}.

\end{defn}

Clearly, the homomorphism $\phi$ in the definition is determined by the $\phi_i$'s and $\phi_\reals$, thus it is determined by $r+1$ cohomology classes.

\begin{exs}  We will give three related examples here, two of them are laminations carried by the same branched surface in a 3-manifold.
In Figure \ref{Local}(a), we see a branched surface $B$ shown immersed in $\reals^3$.  Actually, it can be embedded in $\reals^3$, and we assume it is embedded and that $M$ is a regular neighborhood of $B$.  In the figure, we show weights on $B$, which are values of $\PB$.  We also see a transversely oriented curve $\alpha$ representing a cohomology class, with multiplier $(0,1/2)$ in the transverse direction.   Moving from one side of the cohomology class to the other, the weight is multiplied by $(0,1/2)$ in $\PB$.  The weights and the cohomology class determine a lamination well-covered by an $\bar\OB$-measured lamination.  In fact, this is an affine lamination.   Since the levels of all weights and the multiplier are $0$, the weights could be interpreted as real weights and a real multiplier.  Cutting open the branched surface on the curve representing the cohomology class, the weights on the resulting branched surface $\hat B$ represent an $\reals$-measured lamination.  Glueing with a stretch of 1/2 yields the affine lamination.

\begin{figure}[H]
\centering
\scalebox{1}{\includegraphics{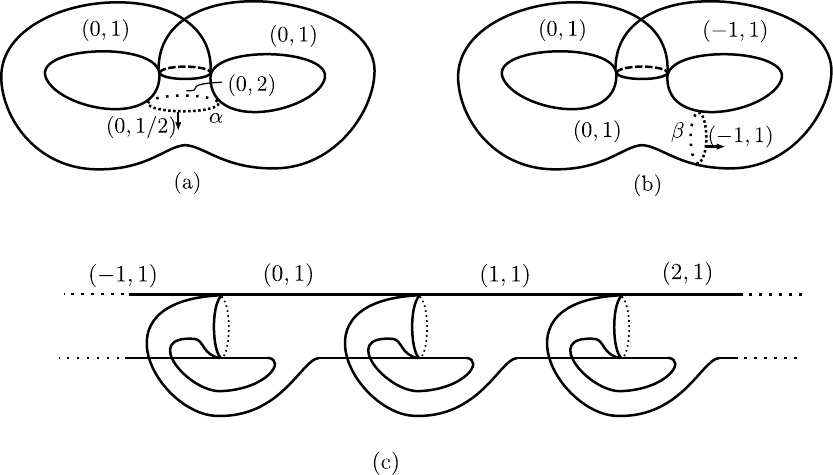}}
\caption{\small Two examples of laminations with transverse $(\PB,\bar\OB)$ structures.}
\label{Local}
\end{figure}

Next we present an example with only a level shift homomorphism, using the same branched surface.  We show weights and the cohomology class at the curve $\beta$ with multiplier $(-1,1)$, but now the multiplier at the cohomology class shifts levels.  The lamination represented by the data has a ``leaf spiraling on itself,"  also sometimes called a spring leaf.  The lamination is completely different from the one in the first example.  In order to see that the data on the branched surface determine a lamination well-covered by a $(\PB,\bar\OB)$-measured lamination we note that the weights on $B$ yield invariant weights on a cover $\tilde B$ as shown in Figure \ref{Local}(c).  According to Theorem \ref{PORealizeThm} this determines a lamination $\tilde L$ carried by $\tilde B$ which is $(\PB,\bar\OB)$-measured.   We need a little more, since we want the lamination to preserved by covering translations {\it and} we want the transverse measure to be transformed according to a homomorphism from the group of deck transformations to $\PB$.   This can be achieved by doing the splittings in the proof of Theorem \ref{PORealizeThm} equivariantly.

For a final example, we change the branched surface $B$ as shown in Figure \ref{LocalMix}.  This branched surface can also be embedded in $\reals^3$ and we assume $M$ is a regular neighborhood in $\reals^3$ of $B$.  We show a transversely oriented curve $\alpha$ with multiplier $(0,1/2)$ representing the stretch homomorphism, and another transversely oriented curve $\beta$ with multiplier $(-1,1)$ representing the level shift homomorphism.  Again the data determine a lamination well-covered by a lamination with a transverse $(\PB,\bar\OB)$ measure.  One must prove the existence of a $(\PB,\bar\OB)$-measured $\tilde L$ carried by some cover $\tilde B$ of $B$ as before.

\begin{figure}[H]
\centering
\scalebox{1}{\includegraphics{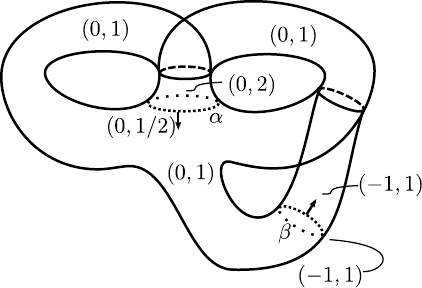}}
\caption{\small Example of a lamination with a local transverse $\PB$ structure.}
\label{LocalMix}
\end{figure}
\end{exs}

Turning to laminations well-covered by $(\PB_r,\bar\OB_r)$- measured laminations, we have the same definitions, but with $r$ level shift homomorphisms and one stretch homomorphism, all of which can be represented by cohomology classes.

\begin{example}  Here is an example of a lamination well covered by a $(\PB_2,\bar\OB_2)$-measured lamination.  In this example, we take the stretch homomorphism to be trivial, so we have two level shift homomorphisms.  Again, we show a branched surface $B$ which can be embedded  in $\reals^3$, and which we assume is so embedded, and we let $M$ be a regular neighborhood of $B$.  This is actually the same branched surface shown in Figure \ref{LocalMix}.  The two level shift homomorphisms are represented by curves with multipliers in $\PB_2$, $\alpha$ with multiplier $(-1,0,1)$ and $\beta$ with multiplier $(0,-1,1)$.

\begin{figure}[H]
\centering
\scalebox{1}{\includegraphics{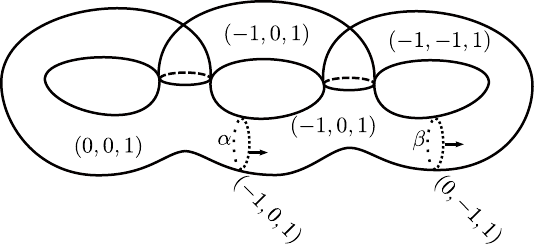}}
\caption{\small Example of a lamination with a local transverse $\PB_2$-structure.}
\label{OrderedP2Ex}
\end{figure}

Figure \ref{OrderArrayBr} shows a cover $\tilde B$ of $B$ which admits an inavariant $\PB_2$ weight vector (with infinitely many entries).  The canonical $(\PB_2,\bar\OB_2)$- measured lamination realizing this weight vector well-covers a lamination carried by $B$ determined by the given data on $B$.  

\begin{figure}[H]
\centering
\scalebox{1}{\includegraphics{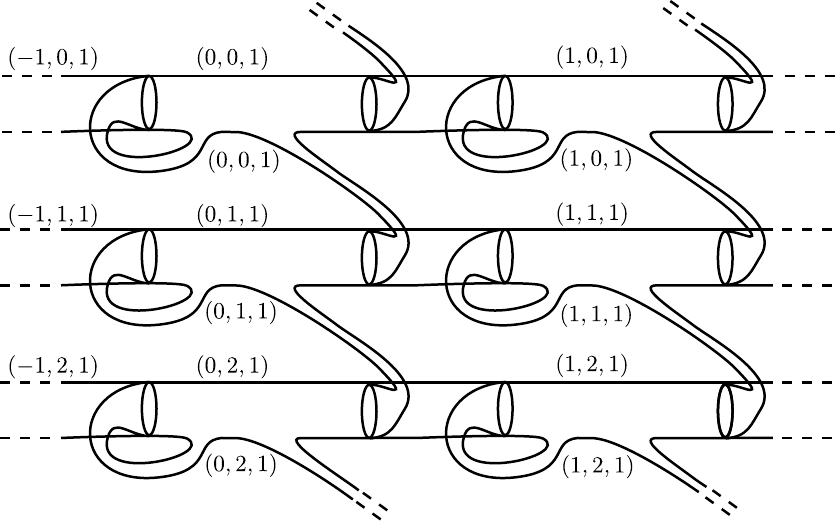}}
\caption{\small Branched surface with invariant weight vector representing lamination which well-covers the lamination in Figure \ref{OrderedP2Ex}.}
\label{OrderArrayBr}
\end{figure}

\end{example}

Suppose $L$ is an essential lamination in a 3-manifold $M$ well-covered by a $(\PB_r,\bar\OB_r)$-measured lamination.  It should be easy to describe an associated action of $\pi_1(M)$ on a $\bar\OB_r$ measured tree.  The action does not necessarily preserve measures on segments; rather, it transforms measures according to stretch and level shift homomorphisms.

\section {Appendix: probability measures.}  \label{Probability}

The author is far from expert in probability.  Therefore it is quite likely that at least some of the ideas presented in this section already exist in some form.   Advice will be appreciated.

If $\LB$ is an ordered abelian semigroup with the lub property, $\LB$-measures can be defined in a reasonable way.   In order to calculate probabilities we need a division operation, so we must work with ordered semifields, which are much less common.  We also want to avoid any kind of infinite measures, at any level, so our choices are even more limited.  Besides $\reals$-measures, one obvious possibility is $\PB=\integers\ins [0,\infty)$.  $\PB$ is a sub-semiring of $\bar\OB$, see Figure \ref{OrderParameter}.  
Notice that $\PB$ does not have the lub property.  For fixed $i\in \integers$ the set $S=\{(i,t):0<t<\infty\}$ is bounded above by $(i+1,1)$, but it does not have a least upper bound. 
\begin{defn} If $(X,\Sigma)$ is a measure space, a {\it probability $\PB$-measure} assigns an element $\nu(E)$ of $\PB$ to each set $E\in \Sigma$ such that the following conditions hold:

(i) $\nu(\emptyset)=0$.

(ii) For each $j$, if $\displaystyle X_j=\bigcup_{\level(\nu(E))=j}E$, then $X_j$ is measurable and $\real(\nu(X_j))=1$ or $\real(\nu(X_j))=0$.

(iii) If $\{E_i\}_{i\in I}$ is a countable collection of disjoint measurable sets in $X$, then 
$$\nu\left(\bigcup_{i\in I} E_i\right )=\sum_{i\in I} \nu(E_i)\in \PB.$$

Often, we assume the probability $\PB$-measure has {\it finite depth}, meaning that the following condition also holds:

(iv) The set $\levels=\{\level(\nu(E)):E\in \Sigma\}$ is finite.
\end{defn}

We observe first that the set $\levels$ is bounded by $\level(\nu(X))$.  Condition (ii) in the definition guarantees that the sum in (iii) can be evaluated and is equal to an element in $\PB$.

Probability $\PB$-measures are useful for calculating relative probabilities of ``black swan" events.  
We prefer a point of view using ``depth" instead of height of levels.  

\begin{defn}
Two probability $\PB$-measures $\nu$ and $\nu'$ are {\it shift equivalent} if $\nu'=(k,1)\nu$ for some $(k,1)\in \PB$, $k\in \integers$.

Any finite depth $\PB$-measure $\nu$ is equivalent by level shift to a measure $\mu$ satisfying $-d\le \level(\nu(E))\le 0$ such that there exist measurable sets $A$ and $B$ such that $\level(\nu(A))=-d$ and $\level(\nu(B))=0$.  Using level omission, we can further modify the measure (while possibly decreasing $d$) such that for each $j$ satisfying $-d\le j\le 0$ there exists $C$ such that $\level(\nu(C))=j$.  When all of these conditions are satisfied, we say $\nu$ is {\it standard}.  We also say that the measure has {\it total depth} $d$.

For a standard probability $\PB$-measure, we say the probability $\nu(E)$ has {\it depth $j$} if \\ $\level(\nu(E))=-j$.  
\end{defn}

Using the depth terminology, we can now explain probabilities of black swan events.  If $\nu(E)$ has depth 0, we can imagine that $\residue(\nu(E))$ represents ordinary probability.  If $\nu(E)$ has greater depth, it has traditional probability 0, but it has a ``higher depth" non-zero probability.  In this way, we assign probabilities
to black swan events whose ordinary probability is 0.  Greater depths correspond to higher ``orders of improbability."

\begin{ex} \label{Dart} (Dartboard example) This is an extremely simple example of a finite depth probability measure.  Suppose we are given a circular dart board $X$ with just a single cross $Y$ drawn on it, the cross consisting of vertical and horizontal diameters.  We can suppose the dart board has area 1, and suppose for simplicity that if $E$ is Lebesgue measurable with measure $\mu(E)$, then the darts have probability $\mu(E)$ of hitting $E$.  Ordinary probability measure assigns the measure (area) of a Lebesgue measurable set $E$ to the event that the dart lands on a point of $E$.  Obviously there are many probability 0 events.  For example, the probability that the dart hits the 1-dimensional cross $Y$ is 0.  We may define a $\PB$-measure $\nu$ as follows.  For any event $E$ with positive measure in the usual sense, we define $\nu(E)=(0,e)$ where $e=\mu(E)$ is the Lebesgue measure of $E$.  If $\mu(E)$ is 0 and $\ell>0$ is the 1-dimensional measure of $E\cap Y$, we define $\nu(E)=(-1,\ell)$.  Otherwise, if $\mu(E)=0$ and $\ell=0$, we define $\nu(E)=0$.  We assume that the total 1-dimensional measure (length) of $Y$ is 1, and that each ray has length 1/4.  The total depth of this $\PB$-measure is 1; it has two levels.  Clearly we could also define a $\PB$-measure with total depth 2 (having three levels) by concentrating the depth two measure at the crossing point at the center of the dart board. 
\end{ex}

The practical usefulness of probability $\PB$-measures comes from the fact that the usual formulas for conditional probability apply and give reasonable answers.

\begin{defn}  If $A$ and $B$ are events in $X$ and $\nu$ is a probability $\PB$-measure on $X$, then the {\it conditional probability of the event $A$ given $B$} is $P(A|B)=\nu(A\cap B)/\nu(B)$.
\end{defn}

\begin{example}[Dartboard example continued]  Suppose $B$ is the event that the dart hits the closed upper half of the dartboard. and $Y$ is the event that the dart hits the cross.  Then $P(Y|B)=\nu(Y\cap B)/\nu(B)=(-1,3/4)/(0,1/2)=(-1,3/4)(0,2)=(-1,3/2)$.  We obtain a probability at depth 1, but this probability is greater than $P(Y)=(-1,1)$.   This says that if the level -1 probability measure on $Y\cap B$ were ``proportionately distributed" with respect to the level 0 measure on $B$, the the relative probability would be $(-1,1)$, but $Y\cap B$ has 3/2 times its share of the measure, so the relative probability is $(-1,3/2)$.

Now suppose $A$ is the event that the dart hits the vertical ray in the upper half of the dart board.  Then $P(A|Y)=(-1,1/4)/(-1,1)=(0,1/4)$, in other words the conditional probability is 1/4 as one would guess.  We can calculate $P(A|B\cap Y)=(-1,1/4)/(-1,3/4)=(0,1/3)$, again as one would expect.  Another less obvious example is $P(A|B)=(-1,1/4)/(0,1/2)=(-1,1/2)$.  If the measure of $A$ were proportionately distributed in $B$, we would have $P(A)=(-1,1/2)$ and $P(A|B)=(-1,1)$, but $A$ is under-represented in $B$.  
\end{example}

Bayes' Theorem holds for probability $\PB$-measures.

\begin{thm} [Bayes Theorem]  Suppose $(X,\Sigma)$ is a measure space and $\nu$ is a probability $\PB$-measure, which we use to calculate probabilities.  Suppose $\{A_i\}$ is a partition of the event space $X$.  Then  $P(B)={\sum _{j}P(B|A_{j})P(A_{j})}$ and 
$$P(A_{i}|B)={\frac  {P(B|A_{i})\,P(A_{i})}{\sum \limits _{j}P(B|A_{j})\,P(A_{j})}}\cdot$$
\end{thm}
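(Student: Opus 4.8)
\emph{Proof proposal.} The plan is to reproduce the classical derivation of Bayes' theorem, taking care that every algebraic step is legitimate in the ordered semi-field $\PB$ and that the single infinite sum that appears is one the definition of a probability $\PB$-measure guarantees to be evaluable. Throughout, write $P(E)=\nu(E)$ for an event $E$ (as is implicit in the definition of $P(A|B)$), assume the index set is countable and $\nu(B)\ne 0$, and, as in the classical statement, assume $\nu(A_i)\ne 0$ for all $i$. (Blocks with $\nu(A_i)=0$ are handled by the convention that then $P(B|A_i)\,P(A_i)=0$; this is consistent because $\nu$ is monotone --- from $A_i=(B\cap A_i)\cup(A_i\setminus B)$, disjointly, and axiom (iv) for ordered abelian semi-groups, $\nu(A_i)\ge\nu(B\cap A_i)$, so $\nu(A_i)=0$ forces $\nu(B\cap A_i)=0$.)

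Step one is the identity $P(B|A_j)\,P(A_j)=\nu(B\cap A_j)$ for each $j$. By definition $P(B|A_j)=\nu(B\cap A_j)/\nu(A_j)=\nu(B\cap A_j)\,\overline{\nu(A_j)}$, where $\overline{\nu(A_j)}$ is the multiplicative inverse guaranteed by the semi-field axiom (ix) since $\nu(A_j)\ne 0$. Then, using commutativity and associativity of multiplication in $\PB$ and axiom (ix), $P(B|A_j)\,P(A_j)=\big(\nu(B\cap A_j)\,\overline{\nu(A_j)}\big)\nu(A_j)=\nu(B\cap A_j)\big(\overline{\nu(A_j)}\,\nu(A_j)\big)=\nu(B\cap A_j)$.

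Step two is summation. Because $\{A_j\}$ partitions $X$, the sets $\{B\cap A_j\}$ are pairwise disjoint and measurable with union $B$. Countable additivity of $\nu$ (condition (iii) in the definition of a probability $\PB$-measure) then gives $\sum_j\nu(B\cap A_j)=\nu\big(\bigcup_j(B\cap A_j)\big)=\nu(B)=P(B)$; condition (ii) is precisely what makes this sum evaluable in $\PB$, which is not summable in general. Combining with step one yields the first assertion, $P(B)=\sum_j P(B|A_j)\,P(A_j)$.

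Step three is pure substitution. From the definition, $P(A_i|B)=\nu(A_i\cap B)/\nu(B)=\nu(A_i\cap B)\,\overline{\nu(B)}$ (the inverse exists since $\nu(B)\ne 0$); replacing $\nu(A_i\cap B)$ using step one and $\nu(B)$ using step two gives $P(A_i|B)=\dfrac{P(B|A_i)\,P(A_i)}{\sum_j P(B|A_j)\,P(A_j)}$, as required. I expect no genuine difficulty in the algebra; the one point deserving attention is the invocation in step two --- one must be sure the infinite sum there is exactly of the kind covered by conditions (ii) and (iii), since the general summability property used elsewhere in the paper fails for $\PB$ --- together with the minor bookkeeping for partition blocks of measure zero noted above.
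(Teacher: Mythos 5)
Your proof is correct and is essentially the intended argument: the paper states this theorem without proof, and the classical derivation carries over verbatim once one notes, as you do, that the semi-field inverse in $\PB$ collapses $P(B|A_j)\,P(A_j)$ to $\nu(B\cap A_j)$, that countable additivity together with condition (ii) makes the sum $\sum_j\nu(B\cap A_j)=\nu(B)$ evaluable despite $\PB$ not being summable in general, and that blocks with $\nu(A_j)=0$ are harmless by monotonicity. Nothing further is needed.
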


\begin{example}[Dart board example continued]  We may suppose that the event space is the dartboard itself.  Let $A_1$ be the closed first quadrant of the dartboard, let $A_2$ be the interior of the second quadrant, let $A_3$ be the closed third quadrant with the center removed, and $A_4$ is the interior of the fourth quadrant.  
$B$ is the event that the dart hits the horizontal line of the cross.   Now we can calculate the various probabilities in Bayes' Formula to calculate $P(A_1|B)$.   For example, $P(B|A_1)=P(B\cap A_1)/P(A_1)=(-1,1/4)/(0,1/4)=(-1,1)$.  Similarly we calculate $P(B|A_2)=0$, $P(B|A_3)=(-1,1/4)$, $P(B|A_4)=0$.   Then Bayes formula gives:
$$P(A_1|B)=\frac {(-1,1)(0,1/4)}{(-1,1)(0,1/4)+(-1,1)(0,1/4)}=\frac{(-1,1/4)}{(-1,1/2)}=(0,1/2).$$

This says the event $P(A_1|B)$ has probability 1/2 in the usual sense, but the probability would be undefined using real-valued probability.
\end{example}

One can use $\PB$-probability distributions to produce a probability $\PB$-measure starting with an arbitrary $\PB$-measure on an event space $X$.  In practice, probably the most useful special case is the case where the distribution has values in $\PB$ and measure on $X$ is a positive $\bar\reals$ measure.  In any case, one needs to define integrals of measurable functions with values in $\PB$, which we do in the following appendix, Section \ref{Integration}.

\begin{defn}  Suppose $(X,\mu)$ is a $\PB$-measure space.  Let $f:X\to \PB$ be a measurable function and let $\nu(E)=\int_E  f d\mu\in \PB$.  Suppose the  following conditions are satisfied:

(i) For every measurable $E\subset X$, $\real(\nu(E))\le 1$;

(ii) If there exists $E$ with $\level(\nu(E)=j$, then there exists $E$ with $\real(\nu(E))= 1$.

Then $f$ is called a probability $\PB$ {\it distribution} (and $\nu$ is a probability $\PB$-measure).
\end{defn}

To finish this section, we ask whether we can define reasonable probability measures with values in other ordered algebraic structures.  Some good candidates are the following:

\begin{defn}  If $(X,\Sigma)$ is a measurable space a {\it probability $\PB_r$-measure} assigns an element $\nu(E)$ of $\PB_r$ to each set $E\in \Sigma$ such that:
\begin{tightenum}
\item $\nu(\emptyset)=0$, 

\item $\nu(E)=(i_1,i_2, \ldots, i_n, s)$ satisfies $i_j\le0$ and $0<s\le 1$. 

\item If there exists measurable $E$ with $\nu(E)=(i_1,i_2, \ldots, i_n, s)$ then there exists measurable $E$  with $\nu(E)=(i_1,i_2, \ldots, i_n, 1)$. 

\item If $\{E_i\}_{i\in I}$ is a countable collection of pairwise disjoint measurable sets in $X$, then 
$$\nu\left(\bigcup_{i\in I} E_i\right )=\sum_{i\in I} \nu(E_i)\in \PB_r.$$
\end{tightenum}

We 
observe that we have already built into our definition a suitable choice of representative by shift equivalence.   

We can also perform alignment operations to ensure there are no ``gaps," and $\nu$ is {\it standard}:

\noindent (iv) \ If for some measurable $E$, $\nu(E)=(i_1,i_2,\ldots, i_n,s)$ for some $s>0$, and if $(0,0,\ldots, 0)>(j_1,j_2,\ldots, j_n)>(i_1,i_2,\ldots, i_n)$ in the lexicographical order, then there exists a measurable set $E'$ with $\nu(E')= (j_1,j_2,\ldots, j_n,t)$ for some $t>0$.   \end{defn}

We note first that $\PB_r$ is a semifield by Lemma \ref{WellDefinedLemma} (c), so the division operation makes probability calculations possible.  Again, although $\PB_r$ does not have the lub property, condition (ii) ensures that partial sums of the sum in (iii) do have a least upper bound, so the sum makes sense.  

\section {Appendix: integration.}  \label{Integration}

If $\KB$ is an ordered abelian semigroups (commutative ordered semiring), we we will in many cases be able to define integrals of real-valued  ($\KB$-valued) functions with respect to $\KB$-measures on measurable spaces.  The definition for non-negative $\bar\reals$-valued functions will be recursive.  To start the recursion, we have  integrals with respect to positive $\bar\reals$-measures.

\begin{defn}  If $(X,\Sigma)$ is a measurable space, $\LB$ is an ordered commutative semiring,  and $f:X\to \LB$ is a function, we say $f$ is {\it measurable} if for every $c\in \LB$, the set $f\inverse(\{x:x<c\})$ is measurable.  

Suppose $\LB$ is an ordered abelian semiring with the least upper bound property and a largest element $\infty$, and $\KB=\integers \ens \LB$ with largest element $\Infty$ (or $\KB=\naturals_0 \ens \LB$).  As before, we deal with $\KB=\integers \ens \LB$ and leave the obvious modifications for $\KB=\naturals_0 \ens \LB$ to the reader.  If $X$ is a measurable space with a $\KB$-measure $\mu$, and integrals of $\bar\reals$- valued measurable functions have been defined for $\LB$-measures on $X$ then we define the integral with respect to the $\KB$-measure $\mu$ as follows.  Let $f$ be a $[0,\infty]$-valued measurable function on a measurable $A\subset X$.  Then, in terms of associated measures, we define
$$\int_A f d\mu=\sum_{k\in \integers}(k,\int_A f d\mu_k),$$
where the sum is in $\KB$ and may have value $\Infty$.  Any summand of the form $(k,0)$ is interpreted as $0\in\KB$.    (The above sum equals one of the summands if the measure has the set $\levels$ of non-trivial levels bounded above.)  Using this definition recursively, starting with $\bar\reals$-measures on $X$ we can define the integral for $\mu$ a $\KB$-measure,  $$\KB=\bar\OB_r=(\integers\ens(\integers\ens\cdots\ens(\integers\ens(\integers\ens [0,\infty]))\cdots)),$$

\noindent and for 
$$\KB=\bar\SB_r=(\naturals_0\ens(\naturals_0\ens\cdots\ens(\naturals_0\ens(\naturals_0\ens [0,\infty]))\cdots)),$$

Suppose $X$ is a measure space with $\KB$-measure $\nu$, $\KB=\bar\OB_r$ (or $\KB=\bar\SB_r$) and suppose $B$ is a measurable set.  We want to define the integral of an $\bar\OB_r$-valued function.  We will deal with $\bar \OB_r$, but the definition is the same for $\bar \SB_r$.  We have already defined the integral of a positive $\bar\reals$-valued measurable function with respect to $\nu$ above, so we have also defined the integral of a $\bar \OB_0$-valued function, where $\bar\OB_0=[0,\infty]\subset \bar\reals$.  To define the integral recursively, suppose we have defined the integral of $\bar \OB_{j-1}$-valued measurable functions on $B$.  Let $g$ be an $\bar\OB_j$-valued function on $B$.   Then let $B_k=\{x:(k-1,\infty)<g(x)\le (k,\infty)\in \bar\OB_j\}$, a measurable set, where $\infty$ is the largest element in $\bar\OB_{j-1}$.   Let  $B_\infty=\{x:g(x)=\Infty\in \bar\OB_j\}$.  Then we define 

$$\int _B g d\nu=\sum_{k\in \integers}\left(k,\int_{B_k} \residue(g) d\nu\right) +\int_{B_\infty}gd\nu,$$

\noindent where the final integral over $B_\infty$ is $\Infty$ if $\nu(B_\infty)>0$, $0$ otherwise.

Note that $\residue(g)$ is an $\bar \OB_{j-1}$-valued measurable function on $B_k$ and we are assuming integrals of such functions have  been defined.  We have now defined $\int _B g d\nu$ for all $\bar \OB_j$-valued functions $g$.   Thus we recursively define $\int _B g d\nu$ for any $\bar\OB_r$-measured function for any $r$.  \end{defn}

\begin{ex}  \label{DiracExample} We define an $\bar\OB$-measure $\nu$ for $\reals^n$, determined by $\nu_0$ and $\nu_{-1}$.  The measure $\nu_0$ is Lebesgue measure.   For $\nu_{-1}$ we use a counting measure which assigns the number of points in a set if the set is countable, and otherwise assigns $\infty$.   This means that $\nu_{-1}$ assigns $\infty$ to a countably infinite set.   Let the Dirac-$\delta$ function $\delta:\reals^n\to \bar\OB$ at $y\in \reals^n$ be defined by 

 $$\delta(x)=\begin{cases} (1,1) & \mbox{if } x=y \\
                  0           & \mbox{otherwise}.
                                            \end{cases}$$

Then $\int_{\{y\}}\delta(x)d\nu=\int_{\reals^n}\delta(x)d\nu=\nu(\{y\})\delta(y)=(-1,1)(1,1)=(0,1)$, and for a ``real-valued function" $f(x)$ with the property that $\level(f(x))=0$, we have $\int_{\{y\}}f(x)d\nu=(-1,1)(0,\real(f(x))=(-1,\real(f(x))$, which is trivial viewed at level 0.  Also for  a real-valued $f$ and any measurable $A$, $\int_Afd\nu= \int_Afd\nu_0$, which is just the Lebesgue integral of $f$.

More generally, if $B$ is a finite or countable set in $\reals^n$ suppose we define

 $$g(x)=\begin{cases} (1,1) & \mbox{if } x\in B \\
                  0           & \mbox{otherwise}.
                                            \end{cases}$$
and suppose $A$ is any measurable set.    Then  $\int_Ag(x)d\nu=\sum_{x\in A\cap B}(-1,1)(1,1)$, which is $(0,n)$ where $n$ is the number of elements in $A\cap B$.   In particular, if $A\cap B$ is countably infinite, we obtain $\int_Ag(x)d\nu=(0,\infty)$.
\end{ex}

The key to the above example is that we have a $\sigma$-subalgebra of countable sets in the $\sigma$-algebra of Lebesgue measurable sets in $\reals^n$.

Recall that we also want to define integrals for $\PB_r$-valued functions with respect to positive real or $\PB_r$-measure.    This makes it possible to consider $\PB_r$ probability distributions.    There is a problem to overcome:   $\PB_r$ does not have the least upper bound property.   On the other hand, $\PB_r\subset \bar\OB_r$ so the integral is defined in the setting of the above definition.

\begin{defn}  Suppose $X$ is a $\PB_r$-measured space with measure $\mu$ and suppose $f:X\to \PB_r$ is a measurable function.   Suppose $A\subset X$ is a measurable set.   Then $f$ is {\it $\PB_r$-integrable} if the integral $\int_A f d\mu\in \PB_r$ where the integral is interpreted as the integral of a $\bar\OB_r$-valued function with respect to a $\bar\OB_r$ measure.
\end{defn}

It is possible to introduce negative values  for  $\LB$-valued functions in an artificial way, and then integrability becomes an issue in another way.

\begin{defn}  Suppose $\LB$ is an ordered abelian semi-algebraic structure (without negative values).  Define the {\it double} of $\LB$ as $\DLB=\{0\}\cup\left[\{+,-\}\times (\LB\setminus \{0\})\right]$.  We define $y=(+,y)$, $-y=(-,y)$, $-(-,y)=y$, and $-(+,y)=(-,y)=-y$.   This becomes an ordered set with the obvious ordering $(-,y)<0$ for all $y\ne 0$, $(+,y)>0$ for all $y\ne 0$, $(+,y)<(+,z)$ and $(-,y)>(-,z)$ if $y<z$.   A function with values in $\DLB$ can be written as $f=f_+-f_-$ where
 $$f_+(x)
=\begin{cases}f(x) & \mbox{if } f(x)\ge 0 \\
                    0 & \mbox{if } f(x)< 0
                                            \end{cases}$$
$$f_-(x)
=\begin{cases}-f(x) & \mbox{if } f(x)\le 0 \\
                    0 & \mbox{if } f(x)> 0
                                            \end{cases}$$
and $f$ is {\it measurable} if $f_+$ and $f_-$ are measurable.

   Suppose now that $\LB=\OB$ or $\LB=\SB$.   If $f$ is a $\DLB$-valued function on an $\LB$-measure space $X$ with measure $\nu$, and $A\subset S$ is measurable, let $P=\int_Af_+d\nu$, $N=\int_Af_-d\nu$, the integrals of the positive and negative parts of $f$.  We define
 $$\int_Afd\nu=\begin{cases}P=\int_Af_+d\nu & \mbox{if } \level(P)>\level(N) \\
                    N=-\int_Af_-d\nu & \mbox{if } \level(P)<\level(N)\\
                    (\level(P),\real(P)-\real(N)) & \mbox{if } \level(P)=\level(N).
                                            \end{cases}$$
\end{defn}

The above definition is awkward because $\DLB$ does not have a subtraction operation.  Algebraically, $\DLB$ is defective;  if one attempts to give it a structure as an additive group, the associative law fails.   However, we are using an artificial sum of a negative and a positive element defined in $\DSB$ or $\DOB$ by 
 $$(i,s)+(-(j,t))=\begin{cases} (i,s) & \mbox{if } i>j \\
                   -(j,t) & \mbox{if } i<j\\
                    (i,s-t) & \mbox{if } i=j \mbox{ and } s\ne t\\
                    0&\mbox {if } i=j \mbox{ and } s= t.
                                            \end{cases}$$
                                            
\noindent  One can define finite sums of elements in $\DLB$ in a similar way by adding only the elements with maximal level.  An example of the failure of associativity is the following:  $-(0,1)+[(0,1)+(-1,1)]=-(0,1)+(0,1)=0$ but   $[-(0,1)+(0,1)]+(-1,1)=0+(-1,1)=(-1,1)$.  If we do the same addition by simply ignoring all but the maximal level terms, we obtain a well-defined sum.

As with real-valued measures, if $\LB$ is a suitable ordered abelian semiring, then different $\LB$-measures may be related by a Radon-Nikodym derivatives.  Thus, if for all measurable $E$ in $X$ we have $\nu(E)=\int_E f(x) d\mu$, where $f$ is an $\LB$ -valued function, 
we can say that $f$ is a {\it Radon-Nikodym derivative $d\nu/d\mu$}.

\hop\hop

\bibliographystyle{amsplain}
\bibliography{ReferencesUO3}
\end{document}